\newtheorem{proposition}{Proposition}
\newtheorem{lemma}{Lemma}
\newtheorem{definition}{Definition}
\newtheorem{theorem}{Theorem}
\newtheorem{remark}{Remark}
\newtheorem{example}{Example}
\newenvironment{proof}{\medskip\noindent{\bf Proof.}\;}{\null\hfill $\Box$\par\medskip }
\renewcommand{\geq}{\geqslant}
\renewcommand{\leq}{\leqslant}
\let\oldpagenumbering\pagenumbering
\renewcommand{\pagenumbering}[1]{%
	\cleardoublepage
	\oldpagenumbering{#1}
}
\newcommand{\diff}{\mathop{}\mathopen{}\mathrm{d}}
\newcommand{\norme}[1]{\left\lVert#1\right\rVert}
\newcommand{\eu}{\ensuremath{\mathrm{e}}}
\newcommand{\jj}{\ensuremath{\mathrm{j}}}
\newcommand{\T}{\ensuremath{\mathsf{T}}}
\newcommand{\expj}[1]{\ensuremath{\eu^{\, \jj #1}}}
\providecommand{\renewoperator}[3]{%
	\renewcommand*{#1}{\mathop{#2}#3}}
\newcommand{\vect}[1]{\bm{#1}}
\newcommand{\mat}[1]{\mathbf{#1}}
\newcommand{\ope}[1]{\mathbf{#1}}
\newcommand{\ip}[2]{\left \langle #1,\, #2\right \rangle}
\newcommand{\RT}{\mathcal{R}}
\newcommand{\RTb}{\bm{\RT}}
\def\R{\mbox{$\mathbb{R}$}}
\def\S{\mbox{$\mathbb{S}$}}
\def\C{\mbox{$\mathbb{C}$}}
\def\vx{\vect{x}}
\def\vh{\vect{h}}
\def\vxi{\vect{\xi}}
\begin{document}

\title{Riesz-based orientation of localizable Gaussian fields\tnoteref{t1}}

\author[ljk]{K. Polisano\corref{cor1}}
\ead{Kevin.Polisano@univ-grenoble-alpes.fr}

\author[ljk]{M. Clausel}
\ead{Marianne.Clausel@univ-grenoble-alpes.fr}

\author[ljk]{V. Perrier}
\ead{Valerie.Perrier@univ-grenoble-alpes.fr}

\author[gipsa]{L. Condat}
\ead{Laurent.Condat@gipsa-lab.grenoble-inp.fr}

\address[ljk]{Univ. Grenoble Alpes, CNRS, Grenoble INP, LJK, 38000 Grenoble, France}
\address[gipsa]{Univ. Grenoble Alpes, CNRS, GIPSA-lab, 38000 Grenoble France}

\cortext[cor1]{Corresponding author}

\begin{abstract}
In this work we give a sense to the notion of orientation for self-similar Gaussian fields with stationary increments, based on a Riesz analysis of these fields, with isotropic zero-mean analysis functions. We propose a structure tensor formulation and provide an intrinsic definition of the orientation vector as eigenvector of this tensor. That is, we show that the orientation vector does not depend on the analysis function, but only on the anisotropy encoded in the spectral density of the field. Then, we generalize this definition to a larger class of random fields called localizable Gaussian fields, whose orientation is derived from the orientation of their tangent fields. Two classes of Gaussian models with prescribed orientation are studied in the light of these new analysis tools.
\end{abstract}

\begin{keyword}
Fractional fields, H-sssi fields,  anisotropy function, Riesz analysis, structure tensor, orientation vector, localizable fields, tangent fields. 
\end{keyword}


\maketitle

\section{Introduction}
Anisotropic images, admitting different characteristics along a considered direction, are ubiquitous in many areas as computer vision~\cite{Pey07}, image processing~\cite{BE01}, and hydrology~\cite{Ben06}. A major issue is then the definition of a suitable concept of local anisotropy. 

A widely used approach, in the image processing community, consists in defining directionality properties of an image by means of its Riesz transform~\cite{FS01}. Several characteristics can then be derived from the knowledge of the Riesz transform of an image: its local orientation, which is roughly speaking the dominant direction at a given point and the structure tensor \cite{bigun1987optimal} whose rank is related to the local dimensionality of the image. This approach has proved to be successful for many applications such as classification or texture retrieval \cite{DFVM11}. Recently, this framework has been extended to the case of superimposed patterns. An extension of the synchrosqueezing method to the bidimensionnal setting, based on wavelet analysis, has been proposed in~\cite{COP15}. 

In many cases, the analyzed anisotropic image is related to some physical phenomena, that can be well modeled using a stochastic approach. Anisotropic random fields are then naturally involved in the modeling of medical images~\cite{BE01} or in spatial statistics~\cite{DH99}. In such situations, the Riesz framework is not so easy to apply. The main difficulty lies in giving a rigourous definition of the Riesz transform of a random field. Indeed~\cite{Stein70}, the Riesz transform of a function is well defined if it belongs to $L^p$ for some $p>1$, which is not the case for the sample paths of many classical random fields, like Fractional Fields widely used to model random textures. The nonlocal character of the Riesz transform then prevents any definition based on a restriction of the considered random field to a compact set. 

To overcome all these difficulties, we choose to use a Riesz-based approach, adapting the work of~\cite{bigun1987optimal,F02} about Riesz-analysis of anisotropic images characteristics. In \cite{F02}, the authors defined a structure tensor and an orientation of the neighborhood of an image, derived from the Riesz framework. Numerical experiments have put in evidence the effectiveness of the approach and especially the fact that one clearly recovers the anisotropic features of the image. From the theoretical point of view, the orientation and the structure tensor depend on the chosen analysis function. We show that, surprisingly, considering the very general case of localizable Gaussian fields, the anisotropic characteristics of a random field become intrinsic: neither the structure tensor nor the orientation vector depend on the analysis function, but only on the anisotropy encoded in the density function of its tangent fields. 

Our paper is organized as follows. In Section~\ref{s:riesz}, we first recall some basic facts about the Riesz transform and its use for defining anisotropic features of an image. Then in Section~\ref{s:self-similar}, we define the Riesz-based notion of the orientation and the structure tensor in the Gaussian self-similar with stationary increments case. We prove in Theorem~\ref{pro:wav:based:orientation} that these two characteristics are intrinsic in the sense that they depend only on the anisotropic properties of the analyzed random field. Section~\ref{s:wav:based:orientation:loc} is then devoted to the extension of all these notions to the localizable case. We then provide two classes of Gaussian models with prescribed orientation. For sake of clarity, we postponed all proofs are postponed in Section~\ref{s:proofs}.

\section{Classical tools in directionality analysis of images}\label{s:riesz}
In this section, we give some background about two classical tools for analyzing  the anisotropy properties of an oriented texture: the local orientation and the structure tensor. We first recall in Section~\ref{s:local-orientation} the usual definitions based on the Riesz transform introduced in \cite{FS01}. 

\subsection{Local orientation of an image and structure tensor\\}\label{s:local-orientation}

The classical notion of local orientation of a texture is based on the Riesz transform. The Riesz transform $\RTb f$ of any $f\in L^2(\mathbb{R}^2)$ is defined in the Fourier domain\footnote{where the 2D Fourier transform is defined for $f\in L^1 (\mathbb{R}^2)$ by $\widehat f(\vxi)=\int_{\R^2} f(\vx) \eu^{-\jj \ip{\vx}{\vxi}} \dif \vx$ and then extended by the Plancherel theorem and by continuity arguments for $f\in L^2 (\mathbb{R}^2)$.} as
\[
\RTb f=\begin{pmatrix}\RT_1f\\\RT_2f\end{pmatrix}\quad \text{with}\quad \widehat{\RT_1f}(\vect{\xi})=-\jj \frac{\xi_1}{\norme{\vect{\xi}}}\widehat{f}(\vect{\xi}),\quad \widehat{\mathcal{R}_2f}(\vect{\xi})=-\jj \frac{\xi_2}{\norme{\xi}}\widehat{f}(\vect{\xi})~,~~\forall  \vect{\xi} =(\xi_1, \xi_2) \in \R^2\;.
\]
The main properties of $\RTb$ \cite{Stein70,UV09} are summarized in the two following propositions. The first ones concern the invariance with respect to dilations, translations, and the steerability property (relation with the rotations).
\begin{proposition}\label{pro:dil}
The Riesz transform commutes both with the translation, and the dilation operator, that is for any $f\in L^2(\mathbb{R}^2)$, $a>0$ and $\vect{b}\in \mathbb{R}^2$, one has
\[
\RTb \ope{D}_a f=\ope{D}_a\RTb f\quad \text{with}\quad \ope{D}_a f=f(a^{-1} \cdot)\;,
\]
and\
\[
\RTb \ope{T}_{\vect{b}} f=\ope{T}_{\vect{b}}\RTb f\quad \text{with}\quad \ope{T}_{\vect{b}} f=f(\cdot-\vect{b})\;.
\]
\end{proposition}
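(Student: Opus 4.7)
The plan is to verify both identities on the Fourier side, where the Riesz transform acts as multiplication by the symbol $m_k(\vxi) = \jj\,\xi_k/\norme{\vxi}$, and then invoke the injectivity of the Fourier transform on $L^2$. The two ingredients I will need are the standard transformation rules
\[
\widehat{\ope{T}_{\vect{b}} f}(\vxi) = \eu^{-\jj\ip{\vect{b}}{\vxi}}\,\widehat{f}(\vxi),
\qquad
\widehat{\ope{D}_a f}(\vxi) = a^{2}\,\widehat{f}(a\vxi),
\]
both of which are obtained from an elementary change of variables in the defining integral; these are in any standard reference and I would simply cite them.

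For the translation identity, I would apply the Riesz multiplier to $\widehat{\ope{T}_{\vect{b}} f}$ and observe that the scalar factor $\eu^{-\jj\ip{\vect{b}}{\vxi}}$ commutes with the pointwise multiplication by $m_k(\vxi)$. Recognising the result as $\eu^{-\jj\ip{\vect{b}}{\vxi}}\,\widehat{\RT_k f}(\vxi) = \widehat{\ope{T}_{\vect{b}} \RT_k f}(\vxi)$ closes that case. This part is essentially a one-line computation.

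For the dilation identity, the key observation is the \emph{zero-homogeneity} of the Riesz symbol: for $a>0$,
\[
m_k(a\vxi) \;=\; \jj\,\frac{a\xi_k}{\norme{a\vxi}} \;=\; \jj\,\frac{\xi_k}{\norme{\vxi}} \;=\; m_k(\vxi).
\]
Combining this with the Fourier dilation rule gives, on one hand, $\widehat{\RT_k \ope{D}_a f}(\vxi) = m_k(\vxi)\,a^{2}\,\widehat{f}(a\vxi)$ and, on the other, $\widehat{\ope{D}_a \RT_k f}(\vxi) = a^{2}\,m_k(a\vxi)\,\widehat{f}(a\vxi)$, and these coincide by homogeneity. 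Taking inverse Fourier transforms componentwise yields the vector identity $\RTb\,\ope{D}_a f = \ope{D}_a\,\RTb f$.

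I do not expect any real obstacle: the statement is a textbook Fourier-multiplier computation, and the only subtle point worth flagging in the write-up is that the hypothesis $a>0$ is used precisely to ensure $\norme{a\vxi} = a\norme{\vxi}$, i.e.\ that $m_k$ is invariant under positive scalings. Everything else is bookkeeping, and both identities extend from Schwartz functions to $L^2(\R^2)$ by density since the Riesz multipliers are bounded on the Fourier side.
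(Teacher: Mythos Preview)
Your argument is correct and is exactly the standard Fourier-multiplier computation one finds in the references the paper cites. Note, however, that the paper does not actually prove this proposition: it is stated as background and attributed to \cite{Stein70,UV09}, so there is no ``paper's own proof'' to compare against; your write-up simply supplies the routine verification that the paper omits.
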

\begin{proposition}\label{pro:steer}
The Riesz transform is steerable, that is, for any $f\in L^2(\mathbb{R}^2)$ one has
\[
{R}_\theta(\RTb f)=\ope{R}_\theta^{-1}\RTb ({R}_\theta f)=\begin{pmatrix}\cos\theta\, \RT_1({R}_\theta f)+\sin\theta\, \RT_2({R}_\theta f)\\-\sin\theta\, \RT_1({R}_\theta f)+\cos\theta\, \RT_2({R}_\theta f)\end{pmatrix}\;,
\] 
where ${R}_\theta f=f(\ope{R}_{-\theta}\, \cdot)$ is the rotation operator by the angle $\theta$, and 
\[
\ope{R}_{-\theta}=\begin{pmatrix}\cos\theta&\sin\theta\\ -\sin\theta&\cos\theta\end{pmatrix}\;.
\]
is the matrix of the spatial rotation of angle $-\theta$.
\end{proposition}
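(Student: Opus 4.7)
The plan is to reduce the identity to a straightforward manipulation in the Fourier domain, where the definition of $\RTb$ lives. The key preliminary fact is the behavior of the Fourier transform under the rotation operator: since $\ope{R}_{-\theta}$ is orthogonal and $R_\theta f = f(\ope{R}_{-\theta}\,\cdot)$, a standard change of variable in the defining integral of $\widehat{R_\theta f}$ gives
\[
\widehat{R_\theta f}(\vxi) = \widehat{f}(\ope{R}_{-\theta}\vxi)\;.
\]
I would state this as a short lemma (or a one-line computation) before starting the main argument.

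Next, I would prove the second equality first, namely
\[
R_\theta(\RTb f) = \ope{R}_\theta^{-1}\,\RTb(R_\theta f)\;,
\]
by verifying it componentwise in frequency. On the left, applying the definition of $\RT_j$ and the rotation rule above yields
\[
\widehat{R_\theta(\RT_j f)}(\vxi) = \jj\,\frac{(\ope{R}_{-\theta}\vxi)_j}{\norme{\vxi}}\,\widehat{f}(\ope{R}_{-\theta}\vxi)\;,
\]
using that $\norme{\ope{R}_{-\theta}\vxi}=\norme{\vxi}$. On the right, the entries of $\ope{R}_{-\theta}\vxi$ are linear combinations of $\xi_1,\xi_2$ with coefficients $\cos\theta,\pm\sin\theta$. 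Pulling these scalar coefficients outside and recognizing the factors $\jj\xi_k/\norme{\vxi}\,\widehat{R_\theta f}(\vxi)$ as $\widehat{\RT_k(R_\theta f)}(\vxi)$ gives exactly the two scalar lines of the right-hand matrix product, which is the explicit expansion of $\ope{R}_\theta^{-1}\RTb(R_\theta f)$ written in the statement.

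The first equality in the proposition, $R_\theta(\RTb f) = \ope{R}_\theta^{-1}\RTb(R_\theta f)$, is then just a rewriting of what was verified, and the final explicit matrix form is obtained by multiplying out $\ope{R}_\theta^{-1}$ against the column vector $\RTb(R_\theta f)$. No obstacle is really expected: the only thing to keep straight is the sign convention, namely that $R_\theta$ acts by $\ope{R}_{-\theta}$ on the argument (so that its dual action on $\vxi$ is also by $\ope{R}_{-\theta}$ because $\ope{R}_{-\theta}^{-T}=\ope{R}_{-\theta}$), and correspondingly that the matrix appearing out front is $\ope{R}_\theta^{-1}=\ope{R}_{-\theta}$ and not $\ope{R}_\theta$. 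Once the Fourier identity is set up correctly, the whole argument is essentially the observation that the symbol $\vxi\mapsto\vxi/\norme{\vxi}$ is a vector field of unit vectors that transforms equivariantly under rotations, which is the conceptual content of steerability.
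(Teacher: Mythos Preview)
Your argument is correct and is the standard Fourier-domain verification of steerability. Note, however, that the paper does not actually supply a proof of this proposition: it is stated as background, with the properties of the Riesz transform attributed to the references \cite{Stein70,UV09}. So there is no ``paper's own proof'' to compare against; your computation is precisely the kind of short justification one would give if asked to fill in the details, and it matches the usual derivation found in those references.
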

The Riesz transform is also a unitary and componentwise antisymmetric operator on $L^2(\mathbb{R}^2)$.
\begin{proposition}\label{pro:unit}
For any $i\in\{1,2\}$, the $i$--th component of the Riesz transform $\RT_i$ is an antisymmetric operator, namely for all $f,g\in L^2(\mathbb{R}^2)$ we have
\[
\ip{\RT_i f}{g}_{L^2(\mathbb{R}^2)}=-\ip{f}{\RT_i g}_{L^2(\mathbb{R}^2)}\;.
\]
and ${\RT_1}^2+{\RT_2}^2=-\ope{I}$, which imply in particular that
\[
\ip{\RTb f}{\RTb g}_{L^2(\mathbb{R}^2,\mathbb{R}^2)}=\ip{\RT_1 f}{\RT_1 g}_{L^2(\mathbb{R}^2)}+\ip{\RT_2 f}{\RT_2 g}_{L^2(\mathbb{R}^2)}=
\ip{f}{g}_{L^2(\mathbb{R}^2)}\;.
\]
\end{proposition}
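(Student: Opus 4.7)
The plan is to pass to the Fourier side via Plancherel/Parseval and exploit the explicit form of the Riesz multipliers $m_i(\vect{\xi}) = \jj\, \xi_i/\norme{\vect{\xi}}$. The key algebraic observation is that each $m_i$ is purely imaginary, so $\overline{m_i(\vect{\xi})} = -m_i(\vect{\xi})$; moreover $m_1^2(\vect{\xi}) + m_2^2(\vect{\xi}) = -(\xi_1^2+\xi_2^2)/\norme{\vect{\xi}}^2 = -1$. Once these two facts are in hand, everything reduces to routine bookkeeping.

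First, I would establish the antisymmetry. Starting from Parseval,
\[
\ip{\RT_i f}{g}_{L^2(\R^2)} = \frac{1}{(2\pi)^2}\int_{\R^2} m_i(\vect{\xi})\,\widehat f(\vect{\xi})\,\overline{\widehat g(\vect{\xi})}\,\diff\vect{\xi},
\]
then using $\overline{m_i} = -m_i$ I would rewrite the integrand as $-\widehat f(\vect{\xi})\,\overline{m_i(\vect{\xi})\,\widehat g(\vect{\xi})}$ and recognize the right-hand side as $-\ip{f}{\RT_i g}_{L^2(\R^2)}$. Because the Riesz multiplier applied to $\widehat f$ of a real function preserves Hermitian symmetry, $\RT_i f$ is real, so the identity makes sense as stated in the real $L^2$-pairing.

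Next, the identity $\RT_1^2 + \RT_2^2 = -\ope{I}$ follows immediately from the multiplier computation above: composing $\RT_i$ with itself amounts to multiplying on the Fourier side by $m_i^2$, and the sum $m_1^2+m_2^2$ equals $-1$ identically. Inverting Fourier yields $(\RT_1^2+\RT_2^2)f = -f$ for every $f\in L^2(\R^2)$.

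Finally, to obtain the unitarity, I combine the two preceding steps:
\[
\ip{\RTb f}{\RTb g}_{L^2(\R^2,\R^2)} = \sum_{i=1}^2 \ip{\RT_i f}{\RT_i g}_{L^2(\R^2)} = -\sum_{i=1}^2 \ip{f}{\RT_i^2 g}_{L^2(\R^2)} = -\ip{f}{(\RT_1^2+\RT_2^2)g}_{L^2(\R^2)} = \ip{f}{g}_{L^2(\R^2)},
\]
applying antisymmetry componentwise in the second equality and the identity $\RT_1^2+\RT_2^2 = -\ope{I}$ in the last. No real obstacle arises; the only point requiring mild care is justifying that multiplication by $m_i$ genuinely defines an operator on $L^2(\R^2)$ (i.e. boundedness of the Fourier multiplier, which holds since $|m_i(\vect{\xi})|\le 1$) so that the Parseval manipulation is valid.
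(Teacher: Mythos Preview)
Your proof is correct and is the standard Fourier-multiplier argument for these facts. The paper itself does not supply a proof of this proposition: it is stated as background, with a citation to \cite{Stein70,UV09}, so there is no ``paper's own proof'' to compare against; your argument is precisely the one those references would give.
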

Using the Riesz transform one can also define the so-called orientation of an image \cite{FS01,YQS11}.
\begin{definition}[Local orientation of a deterministic function]
Let $f\in L^2(\mathbb{R}^2, \mathbb{R}^*_+)$. \\ Assume that a.e. $\RTb f\neq 0$. One then defines 
\[
\forall a.e. ~\vect{x}\in\mathbb{R}^2,\quad \vect{\vec n}(\vx)=\frac{\RTb f(\vx)}{\norme{\RTb f(\vx)}}\;,
\]
the local orientation of $f$ at point $\vect{x}$.
\end{definition}
The Riesz transform can be viewed as a smooth version of the gradient operator. To characterize the degree of directionality of $f$ at some point $\vect{x}$, a classical tool, widely used in the image processing community, is the so-called structure tensor involving the gradient \cite{bigun1987optimal,harris1988combined,jahne2004practical}, which has been revisited through the Riesz transform by \cite{F02}. Let us begin with the $2\times 2$ matrix:
\begin{equation}\label{eq:Jf}
\ope{J}_f(\vect{x})=\RTb f(\vect{x})\RTb f^{\T}(\vect{x})=\begin{pmatrix}\mathcal{R}_1 f(\vect{x})^2&\mathcal{R}_1 f(\vect{x})\mathcal{R}_2 f(\vect{x})\\ \mathcal{R}_2 f(\vect{x})\mathcal{R}_1 f(\vect{x})&\mathcal{R}_2 f(\vect{x})^2\end{pmatrix}\;.
\end{equation}
This matrix is symmetric, positive definite, of rank one and then admits 0 as eigenvalue and also 1 with the associated eigenvector $\RTb f(\vect{x})$, that is the local orientation, since 
\[
\ope{J}_f(\vect{x})\RTb f(\vect{x})=(\RTb f(\vect{x})\RTb f(\vect{x})^{\T})\RTb f(\vect{x})=\RTb f(\vect{x})(\RTb f(\vect{x})^{\T}\RTb f(\vect{x}))=\RTb f(\vect{x})\;.
\] 
In practice, we define the local orientation vector on a point $\vect{x}\in \mathbb{R}^2$ as the vector minimizing the distance to the set of the Riesz vectors $\RTb f (\vect{x}')$ on a neighborhood of $\vect{x}\in \mathbb{R}^2$, defined by a positive window $W(\vect{x}-\vect{x}')$, that is we are looking for the vector $\vect{n}$ maximizing the quantity $ \int W(\vect{x}-\vect{x}')(\RTb f(\vect{x}')^\T\vect{\vec n})^2\diff \vect{x}'$ or equivalently $\vect{\vec n}^\T\mat{J}_f^W(\vect{x})\vect{\vec n}$, with
\begin{equation}\label{eq:JfW}
\ope{J}_f^W(\vect{x})= (W*\ope{J}_f)(\vect{x})\;,
\end{equation}
which is the matrix $\ope{J}_f(\vect{x})$ filtered by the positive windows function $W$ to form the structure tensor \cite{bigun1987optimal,harris1988combined,jahne2004practical}. This matrix is symmetric and positive definite, then admits two nonnegative eigenvalues $\lambda_1(\vx)$ and $\lambda_2(\vx)$. It is easy to see that the local orientation $\vect{\vec n}(\vect{x})$ maximizing $\vect{\vec n}^\T\mat{J}_f^W(\vect{x})\vect{\vec n}$ is always an eigenvector of the matrix $\ope{J}_f^W(\vect{x})$ associated to its largest eigenvalue $\lambda_1(\vx)$. 

The following coherency index provides a degree of directionality at any point \cite{bigun1987optimal}:
\[
\chi_f(\vect{x})=\frac{{\lambda_{1}(\vect{x})- \lambda_{2}(\vect{x})}}{\lambda_{1}(\vect{x})+\lambda_{2}(\vect{x})}\in [0,1]\;.
\]
The case $\chi_f(\vect{x})\approx1$ corresponds to an almost one dimensional image at $\vect{x}$, whereas the case $\chi_f(\vect{x})=0$ may correspond to different situations, such as isotropy or existence of a corner. Note that, all these quantities depend on the chosen windows $W$.

The aim of next sections is to adapt this framework to the case of random Gaussian fields. We first shall consider the case of self-similar Gaussian fields admitting stationary increments in Section \ref{s:self-similar}, and in Section~\ref{s:wav:based:orientation:loc} to the more general classe of localizable Gaussian fields. 

\section{Riesz-based orientation of self-similar Gaussian fields with stationary increments}\label{s:self-similar}

We observe first that all the definitions of the previous section cannot be extended directly to the case of random fields. Indeed, there is a difficulty to circumvent, which is the definition of the Riesz transform of a Gaussian random field, like a Brownian motion for example. The usual definition of the Riesz transform holds for $L^p$ functions. Unfortunately, the sample paths of many classical random Gaussian fields do not belong to these spaces. Defining the Riesz transform of a Gaussian field by duality (as it is often done for the transforms of distributions) is not straightforward, because the Schwartz class is not stable by the Riesz transform. Although some work on the Hilbert transform of temperate distributions \cite{ishikawa1985generalized,koizumi1959hilbert} can potentially be extended to a generalized Riesz transform adapted to random fields, we favor here to define a notion of orientation for Gaussian fields, which is easily interpretable (intuitively related to the spectral density of the field) and easy to manipulate from a computational point of view (which can be explicitly computed for usual random fields and estimated by the use of wavelets or filter bank). 

In this section, we first begin with the simple case of self-similar Gaussian fields. We give in Section~\ref{s:self-similar:def} some background on such fields. Then in Section~\ref{s:wav:based:orientation:ssi}, we define our notion of orientation, based on a Riesz analysis of these fields with an appropriated analysis function. Finally in Section~\ref{s:examples}, we give several examples of oriented self-similar Gaussian fields with stationary increments. From now on, we restrict ourselves to bidimensionnal centered real valued Gaussian fields, since our goal is to analyse anisotropic images. We shall also assume that the Gaussian field $X$ under consideration is stochastically continuous; that is, the covariance
\[
(\vect{x},\vect{y})\mapsto \mathbb{E}[X(\vect{x})X(\vect{y})]\;,
\]
is a continuous function on $(\mathbb{R}^2)^2$.
\subsection{Self-similar Gaussian fields with stationary increments\\}\label{s:self-similar:def}

In what follows we shall focus on the special case of $H$--self-similar Gaussian fields admitting stationary increments ($H$-sssi), studied in \cite{Do79, BJR97, bonami2003anisotropic}. Remember that the bidimensionnal Gaussian field $X$ is said to {\it admit stationary increments} if for any $\vx\in\mathbb{R}^2$,
\[
\{X(\vx+\vh)-X(\vx)\}_{\vh\in\mathbb{R}^2}\overset{(fdd)}{=}\{X(\vh)\}_{\vh\in\mathbb{R}^2}\;,
\]
whereas $X$ is said to be {\it $H$--self-similar} (see~\cite{Do79}), for some $H\in (0,1)$ if
\[
\forall c>0,\quad \{X(c\vx)\}_{\vect{x}\in\mathbb{R}^2}\overset{(fdd)}{=}\{ c^H X(\vx)\}_{\vx\in\mathbb{R}^2}\;,
\]
where as usual $\overset{(fdd)}{=}$ means equality of finite dimensional distributions. Since $X$ is assumed to be stochastically continuous, the self-similarity implies in particular that $X(\vect{0})=0$ a.s.

We now recall, following~\cite{yaglom1957some}, the notion of spectral measure of a Gaussian field admitting stationary increments, based on the following classical result.
\begin{proposition}\label{prop:borel}
Let $X=\{X(\vect{x})\}_{\vect{x}\in \mathbb{R}^2}$ be a centered real-valued Gaussian field with stationary increments. Then, there exists a unique Borel measure $\sigma_X$ satisfying
\[
\int_{\mathbb{R}^2} \min(1, \|\vect{\xi}\|^2)\dif \sigma_X(\vect{\xi})<\infty\;,
\] 
such that for any $\vect{x},\vect{y}\in \mathbb{R}^2$, the covariance reads:
\[
\mathbb{E}(X(\vect{x})X(\vect{y}))=\int_{\mathbb{R}^2}(\expj{\ip{\vect{x}}{\vect{\xi}}}-1)(\eu^{-\jj \ip{\vect{y}}{\vect{\xi}}}-1)\dif \sigma_X(\vect{\xi})\;.
\]
The measure $\sigma_X$ is called the spectral measure of the Gaussian field with stationary increments $X$.
\end{proposition}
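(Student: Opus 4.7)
The plan is to reduce the stated covariance formula to a spectral representation of the variogram $V(\vh)=\mathbb{E}[X(\vh)^2]$, and then invoke a classical Schoenberg-type theorem for continuous conditionally negative definite functions (this is essentially Yaglom's strategy, developed in the reference \cite{GV67} cited in the statement). As a first observation, applying the stationary-increment identity at $\vh=0$ forces $X(0)=0$ almost surely, so for any $\vx,\vect{y}\in\mathbb{R}^2$ the random variable $X(\vx)-X(\vect{y})$ has the same law as $X(\vx-\vect{y})$. Expanding $\mathbb{E}[(X(\vx)-X(\vect{y}))^2]=V(\vx-\vect{y})$ then yields the polarization identity
\[
\mathbb{E}[X(\vx)X(\vect{y})] = \tfrac{1}{2}\bigl[V(\vx)+V(\vect{y})-V(\vx-\vect{y})\bigr].
\]

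Next I would check that $V$ is continuous on $\mathbb{R}^2$ with $V(0)=0$ (from stochastic continuity), even (since $X$ is real), and that $-V$ is conditionally positive definite: for any $c_1,\dots,c_n\in\mathbb{C}$ with $\sum_j c_j=0$, the $V(\vx_j)$ and $V(\vx_k)$ contributions in the polarization identity cancel, leaving
\[
-\sum_{j,k} c_j \bar c_k V(\vx_j-\vx_k) = 2\operatorname{Var}\!\Bigl(\sum_j c_j X(\vx_j)\Bigr)\geq 0.
\]
The Schoenberg representation then produces a unique positive Borel measure $\sigma_X$ on $\mathbb{R}^2\setminus\{0\}$ satisfying $\int \min(1,\|\vxi\|^2)\,\diff\sigma_X<\infty$ and
\[
V(\vh) = 2\int \bigl(1-\cos\langle\vh,\vxi\rangle\bigr)\,\diff\sigma_X(\vxi).
\]

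Substituting this representation back into the polarization identity and using symmetry of $\sigma_X$ (inherited from the evenness of $V$) to cancel odd sine contributions, the stated covariance formula follows once one expands the real part of $(\expj{\langle\vx,\vxi\rangle}-1)(\eu^{-\jj\langle\vect{y},\vxi\rangle}-1)$ as $1-\cos\langle\vx,\vxi\rangle-\cos\langle\vect{y},\vxi\rangle+\cos\langle\vx-\vect{y},\vxi\rangle$. The main obstacle is this Schoenberg step: one must control the singularity at $\vxi=0$ (absorbed into the $\min(1,\|\vxi\|^2)$ integrability), rule out the additional quadratic polynomial component that the general L\'evy--Khintchine formula would allow (compatible with the class of fields under study but worth making explicit), and prove uniqueness. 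Uniqueness is obtained via the second-order difference identity
\[
V(\vh+\vect{k})+V(\vh-\vect{k})-2V(\vh) = 4\int\cos\langle\vh,\vxi\rangle\bigl(1-\cos\langle\vect{k},\vxi\rangle\bigr)\,\diff\sigma_X(\vxi),
\]
which for each fixed $\vect{k}$ identifies the Fourier cosine transform of a finite positive measure; injectivity of this transform, together with varying $\vect{k}$ to avoid the zero set of $1-\cos\langle\vect{k},\cdot\rangle$, determines $\sigma_X$ uniquely on $\mathbb{R}^2\setminus\{0\}$.
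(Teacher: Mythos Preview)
The paper does not actually prove this proposition: it is stated as a classical result, attributed to \cite{GV67}, with no proof environment following it. So there is no ``paper's own proof'' to compare against; your outline is essentially the standard Yaglom--Schoenberg argument that the cited reference carries out.

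Your sketch is correct in its architecture. One point you already flag deserves to be made sharper: in full generality the Schoenberg/L\'evy--Khintchine representation of a continuous, even, conditionally negative definite function on $\mathbb{R}^2$ reads
\[
V(\vh) = \langle \mat{A}\vh,\vh\rangle + 2\int_{\mathbb{R}^2}\bigl(1-\cos\langle\vh,\vxi\rangle\bigr)\,\diff\sigma_X(\vxi),
\]
with $\mat{A}$ a nonnegative symmetric matrix corresponding to a Gaussian linear drift $\langle \vect{G},\vx\rangle$ in $X$. The proposition as stated in the paper omits this quadratic term, so strictly speaking it is not a consequence of the hypotheses listed; it holds once one restricts to fields without such a drift (which is implicit here, since the paper immediately specializes to fields admitting a spectral \emph{density}, and the later self-similarity assumption with $H<1$ also kills the quadratic part). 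You should say explicitly which extra hypothesis you are invoking to drop $\mat{A}$, rather than leaving it as ``worth making explicit.'' Apart from that, the polarization step, the conditional positive-definiteness check, and the uniqueness argument via second differences are all sound.
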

In what follows we shall consider only Gaussian fields whose spectral measure $\sigma_X$ admits a density $f_X$, called the spectral density of $X$, with respect to the Lebesgue measure: $\dif \sigma_X(\vect{\xi})=f_X(\vect{\xi}) \dif \vect{\xi} $. Since $X$ is real-valued this function is necessarily even. Such field admits an harmonizable representation:
\begin{equation}
\label{eq:harmonizable}
X(\vx)=\int_{\mathbb{R}^2}\left(\expj{\ip{\vect{x}}{\vect{\xi}}}-1\right)f_X^{1/2}(\vect{\xi})\widehat{\ope{W}}(\diff \vect{\xi})\;,
\end{equation}
where $\widehat{\ope{W}}$ is a complex-valued white noise.
By uniqueness of the spectral density, the representation of $H$--self-similar Gaussian fields follows (see also~\cite{Do79, BJR97, bonami2003anisotropic, cohen2013fractional}):
\begin{proposition}[Dobrushin \cite{Do79}]\label{pro:spectral-density-ss}
Let $H\in (0,1)$ and $X$ be a $H$--self-similar Gaussian field with stationary increments admitting a spectral density $f_X$. Then $f_X$ is of the form
\[
f_X(\vect{\xi})=\norme{\vect{\xi}}^{-2H-2} C_X\left(\frac{\vect{\xi}}{\norme{\vect{\xi}}}\right)\;,
\]
where $C_X$ is a positive homogeneous function defined on the sphere $\S^{1}=\{\vect{\xi}\in\mathbb{R}^2,\,\norme{\vect{\xi}}=1\}$. The function $C_X$ is called the anisotropy function of $X$.
\end{proposition}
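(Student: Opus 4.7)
The plan is to exploit the $H$-self-similarity of $X$ to derive a functional equation (homogeneity) for the spectral density $f_X$, and then read off the stated form.

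First, I would write the covariance of $X$ at two rescaled points $c\vect{x}$ and $c\vect{y}$ using the spectral representation from Proposition~\ref{prop:borel}:
\[
\mathbb{E}(X(c\vect{x})X(c\vect{y}))=\int_{\mathbb{R}^2}(\expj{\ip{c\vect{x}}{\vect{\xi}}}-1)(\eu^{-\jj \ip{c\vect{y}}{\vect{\xi}}}-1) f_X(\vect{\xi})\dif\vect{\xi}.
\]
The change of variables $\vect{\eta}=c\vect{\xi}$ (Jacobian $c^{-2}$) turns this into
\[
\int_{\mathbb{R}^2}(\expj{\ip{\vect{x}}{\vect{\eta}}}-1)(\eu^{-\jj \ip{\vect{y}}{\vect{\eta}}}-1)\, c^{-2} f_X(\vect{\eta}/c)\dif\vect{\eta}.
\]
On the other hand, $H$-self-similarity gives $\mathbb{E}(X(c\vect{x})X(c\vect{y}))=c^{2H}\mathbb{E}(X(\vect{x})X(\vect{y}))$, which equals
\[
\int_{\mathbb{R}^2}(\expj{\ip{\vect{x}}{\vect{\eta}}}-1)(\eu^{-\jj \ip{\vect{y}}{\vect{\eta}}}-1)\, c^{2H} f_X(\vect{\eta})\dif\vect{\eta}.
\]

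Next I would invoke the \emph{uniqueness} of the spectral measure asserted in Proposition~\ref{prop:borel}: since both $c^{-2}f_X(\vect{\eta}/c)\dif\vect{\eta}$ and $c^{2H}f_X(\vect{\eta})\dif\vect{\eta}$ represent the spectral measure of the same stationary-increments Gaussian field $Y(\vect{x}):=X(c\vect{x})$, the two densities must coincide (Lebesgue-almost everywhere). This yields the homogeneity relation
\[
f_X(c\vect{\eta})=c^{-2H-2}f_X(\vect{\eta}),\qquad \forall c>0,\ \text{a.e. }\vect{\eta}\in\mathbb{R}^2.
\]
Choosing a representative of $f_X$ (or using continuity on rays via an essential-supremum/averaging argument), I would then set $c=\|\vect{\eta}\|$ and apply the relation to the unit vector $\vect{\eta}/\|\vect{\eta}\|\in\mathbb{S}^1$, obtaining
\[
f_X(\vect{\eta})=\|\vect{\eta}\|^{-2H-2} f_X\!\left(\frac{\vect{\eta}}{\|\vect{\eta}\|}\right),
\]
and I would \emph{define} $S_X(\vect{u}):=f_X(\vect{u})$ for $\vect{u}\in\mathbb{S}^1$. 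Positivity of $S_X$ comes from $f_X\geq 0$.

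The main subtlety — not a deep one, but the place where care is needed — is the passage from the almost-everywhere homogeneity relation to a pointwise representation on $\mathbb{S}^1$. Since $f_X$ is defined only up to a Lebesgue-negligible set, the function $S_X$ must be understood as an equivalence class of functions on $\mathbb{S}^1$ (with respect to the surface measure), or equivalently one picks the canonical homogeneous representative $\|\vect{\xi}\|^{-2H-2}S_X(\vect{\xi}/\|\vect{\xi}\|)$ as the distinguished density. Everything else (integrability of the spectral measure near $0$ and $\infty$, evenness of $f_X$ inherited from that of $S_X$) is immediate and need not be spelled out in detail.
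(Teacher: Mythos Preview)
Your proof is correct and follows essentially the same route as the paper's: both derive the homogeneity relation $f_X(\vect{\xi}/c)=c^{2H+2}f_X(\vect{\xi})$ from self-similarity via the spectral representation and uniqueness of the spectral measure, then specialize $c=\|\vect{\xi}\|$. The only cosmetic differences are that the paper works with the variance (the diagonal $\vect{x}=\vect{y}$) rather than the full covariance, and it does not dwell on the almost-everywhere versus pointwise issue you (rightly) flag.
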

\begin{remark}
The estimation problem of the anisotropy function has been addressed by Istas in~\cite{Ist07}.
\end{remark} 

\noindent 
We now investigate the orientation properties  of a self-similar Gaussian field deformed by a linear transform.
\begin{proposition}\label{pro:linear}
Let $X$ be a $H$--self-similar Gaussian field with stationary increments admitting as spectral density $f_X$ and as anisotropy function $C_X$. Let $\mat{L}$ be an invertible $2\times 2$ real valued matrix. Define $X_{\mat{L}}$ by $X_{\mat{L}}(\vect{x})=X(\mat{L}^{-1} \vect{x})$. Then $X_\mat{L}$ is a $H$--self-similar Gaussian field admitting as
\begin{itemize}
\item spectral density 
\[
f_{X_\mat{L}}(\vect{\xi})=\abs{\det(\mat{L})} f_X(\mat{L}^{\T}\vect{\xi})\;, \quad \vect{\xi} \in \R^2\;,
\]
\item anisotropy function 
\[
C_{X_\mat{L}}(\vect{\Theta})=\frac{\abs{\det(\mat{L})}}{\norme{\mat{L}^{\T} \vect{\Theta}}^{2H+2}}C_X\left(\frac{\mat{L}^{\T}\,\vect{\Theta}}{\norme{\mat{L}^{\T}\,\vect{\Theta}}}\right)\;, \quad \vect{\Theta}\in\mathbb{S}^1.
\]
\end{itemize}
\end{proposition}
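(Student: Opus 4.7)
The plan is to decompose the statement into three sub-claims and handle them in sequence: (i) $X_\mat{L}$ still has the structural properties of $X$ (stationary increments, $H$-self-similarity); (ii) the spectral density transforms as claimed; (iii) the anisotropy function is then read off through Proposition~\ref{pro:spectral-density-ss}. The main technical move is a change of variable in the Fourier integral, and the rest is bookkeeping.

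First I would verify self-similarity and stationarity of increments of $X_\mat{L}$. For any $c>0$, one has $X_\mat{L}(c\vect{x}) = X(c\mat{L}^{-1}\vect{x}) \overset{(fdd)}{=} c^{H} X(\mat{L}^{-1}\vect{x}) = c^{H} X_\mat{L}(\vect{x})$ by the $H$-self-similarity of $X$, and similarly $\{X_\mat{L}(\vect{x}+\vect{h}) - X_\mat{L}(\vect{x})\}_{\vect{h}} = \{X(\mat{L}^{-1}\vect{x} + \mat{L}^{-1}\vect{h}) - X(\mat{L}^{-1}\vect{x})\}_{\vect{h}} \overset{(fdd)}{=} \{X(\mat{L}^{-1}\vect{h})\}_{\vect{h}} = \{X_\mat{L}(\vect{h})\}_{\vect{h}}$ by stationarity of the increments of $X$. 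Hence Proposition~\ref{prop:borel} applies to $X_\mat{L}$ and guarantees the existence of a unique spectral measure (admitting a density, as we shall see).

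Next I would compute the covariance of $X_\mat{L}$ starting from the spectral representation of $X$ given by Proposition~\ref{prop:borel}:
\[
\mathbb{E}(X_\mat{L}(\vect{x}) X_\mat{L}(\vect{y})) = \int_{\mathbb{R}^2}(\expj{\ip{\mat{L}^{-1}\vect{x}}{\vect{\xi}}}-1)(\eu^{-\jj \ip{\mat{L}^{-1}\vect{y}}{\vect{\xi}}}-1) f_X(\vect{\xi}) \dif \vect{\xi}.
\]
Using $\ip{\mat{L}^{-1}\vect{x}}{\vect{\xi}} = \ip{\vect{x}}{\mat{L}^{-\T}\vect{\xi}}$ and the change of variable $\vect{\eta} = \mat{L}^{-\T}\vect{\xi}$, whose Jacobian contributes $|\det(\mat{L})|$, the integral becomes
\[
\int_{\mathbb{R}^2}(\expj{\ip{\vect{x}}{\vect{\eta}}}-1)(\eu^{-\jj \ip{\vect{y}}{\vect{\eta}}}-1) \, |\det(\mat{L})| f_X(\mat{L}^{\T}\vect{\eta})\dif \vect{\eta}.
\]
By uniqueness of the spectral density (Proposition~\ref{prop:borel}), one reads $f_{X_\mat{L}}(\vect{\eta}) = |\det(\mat{L})| f_X(\mat{L}^{\T}\vect{\eta})$, which is the first claimed formula.

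Finally, I would plug the homogeneous form $f_X(\vect{\xi}) = \norme{\vect{\xi}}^{-2H-2} S_X(\vect{\xi}/\norme{\vect{\xi}})$ from Proposition~\ref{pro:spectral-density-ss} into the expression for $f_{X_\mat{L}}$. Writing $\vect{\xi} = \norme{\vect{\xi}} \vect{\Theta}$ with $\vect{\Theta}\in\mathbb{S}^1$, one gets $\mat{L}^{\T}\vect{\xi} = \norme{\vect{\xi}}\, \mat{L}^{\T}\vect{\Theta}$, hence $\norme{\mat{L}^{\T}\vect{\xi}} = \norme{\vect{\xi}}\,\norme{\mat{L}^{\T}\vect{\Theta}}$ and $\mat{L}^{\T}\vect{\xi}/\norme{\mat{L}^{\T}\vect{\xi}} = \mat{L}^{\T}\vect{\Theta}/\norme{\mat{L}^{\T}\vect{\Theta}}$. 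The $\norme{\vect{\xi}}^{2H+2}$ factors cancel neatly, leaving
\[
f_{X_\mat{L}}(\vect{\xi}) = \norme{\vect{\xi}}^{-2H-2}\cdot \frac{|\det(\mat{L})|}{\norme{\mat{L}^{\T}\vect{\Theta}}^{2H+2}} S_X\!\left(\frac{\mat{L}^{\T}\vect{\Theta}}{\norme{\mat{L}^{\T}\vect{\Theta}}}\right),
\]
which identifies $S_{X_\mat{L}}$ as stated. I do not expect any real obstacle here: the only subtle point is keeping track of the transpose versus inverse in the change of variable (so that $\mat{L}^{\T}$ rather than $\mat{L}^{-1}$ appears inside $f_X$), and being careful that $|\det(\mat{L}^{\T})| = |\det(\mat{L})|$ so the Jacobian factor lands in front rather than being absorbed.
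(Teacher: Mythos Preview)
Your proposal is correct and follows essentially the same approach as the paper: both verify that $X_\mat{L}$ inherits self-similarity and stationary increments from $X$, compute the covariance via the spectral representation, and perform the change of variable $\vect{\eta}=\mat{L}^{-\T}\vect{\xi}$ to read off $f_{X_\mat{L}}$ and then $S_{X_\mat{L}}$. Your write-up is in fact slightly more explicit than the paper's in spelling out the stationarity/self-similarity check and the derivation of $S_{X_\mat{L}}$ from $f_{X_\mat{L}}$.
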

\begin{proof}
The self-similarity and stationarity properties of $X_\mat{L}$ directly come from that of $X$ and of the linearity of $\mat{L}$. To compute the spectral density of $X_\mat{L}$, observe that:
\begin{align*}
\mathbb{E}(X_\mat{L}(\vect{x}),X_\mat{L}(\vect{y}))
&= \mathrm{Cov}(X(\mat{L}^{-1}\vect{x}),X(\mat{L}^{-1}\vect{y}))\;,\\
&=\int_{\mathbb{R}^2}\left(\expj{\ip{\mat{L}^{-1}\vect{x}}{\vect{\xi}}}-1\right)\left(\eu^{-\jj \ip{\mat{L}^{-1}\vect{y}}{\vect{\xi}}}-1\right)f_X(\vect{\xi})\dif \vect{\xi}\;,\\
&=\int_{\mathbb{R}^2}\left(\expj{\ip{\vect{x}}{(\mat{L}^{-1})^\T\vect{\xi}}}-1\right)\left(\eu^{-\jj \ip{\vect{y}}{(\mat{L}^{-1})^\T \vect{\xi}}}-1\right)f_X(\vect{\xi})\dif \vect{\xi}\;,\\
&=\int_{\mathbb{R}^2}\left(\expj{\ip{\vect{x}}{\vect{\zeta}}}-1\right)\left(\eu^{-\jj \ip{\vect{y}}{\vect{\zeta}}}-1\right)f_X(\mat{L}^\T \vect{\zeta})~
\lvert \mathrm{det}\, \mat{L}\rvert \dif \vect{\zeta}\;,
\end{align*}
using the change of variable $\vect{\vect{\zeta}}= (\mat{L}^{-1})^{\T} \vect{\xi}$, which directly leads by identification to the explicit expression of $f_{X_\mat{L}}$, as well as that of $C_{X_\mat{L}}$ given in Proposition~\ref{pro:linear}. 
\end{proof}

\noindent We now explain how to define in a proper way the directional characteristics of a Gaussian field $X$ admitting stationary increments in the self-similar case, and the relation to its anisotropy function $C_X$.   

\subsection{Definition of a Riesz-based orientation in the self-similar case\\}\label{s:wav:based:orientation:ssi}

Our notion of Riesz-based orientation of a self-similar Gaussian field will be based on the following preliminary result, leading to a new formulation for the structure tensor.
\begin{theorem}\label{pro:wav:based:orientation}
Let $X$ be a $H$--self-similar Gaussian field admitting a spectral density $f_X$ whose anisotropy function $C_X$ is bounded and $\psi$ a real isotropic wavelet with 2 zero moments and fast decay, defined by its 2-D Fourier transform $\widehat \psi(\vect{\xi})=\varphi(\norme{\vect{\xi}})$. 
By analogy with the structure tensor \eqref{eq:Jf}-\eqref{eq:JfW} evaluated at the origin, let us define the matrix:
\[
\mat{J}_X^{\psi}= \begin{pmatrix} |\ip{X}{\mathcal{R}_1 \psi}|^2 & \ip{X}{\mathcal{R}_1 \psi}\overline{\ip{X}{\mathcal{R}_2 \psi}} \\  \overline{\ip{X}{\mathcal{R}_1 \psi}}\ip{X}{\mathcal{R}_2 \psi}  &  |\ip{X}{\mathcal{R}_2 \psi}|^2\end{pmatrix}\;,
\]
with the Gaussian variables $ \ip{X}{\mathcal{R}_{\ell} \psi} = \int_{ \mathbb{R}^2}^{} X(\vect{x}) \mathcal{R}_{\ell} \psi(\vect{x})\diff \vect{x}$. \\
Then, the covariance matrix of the Gaussian vector $(\ip{X}{\mathcal{R}_1 \psi},\ip{X}{\mathcal{R}_2 \psi})^{\T}$ is
\[
\mathbb{E} \left[\mat{J}_{X}^{\psi} \right]=\left(\int_{0}^{+\infty}\frac{| \varphi(r)|^2}{r^{2H+1}}\dif r\right)\mat{J}_X\;,
\]
where  for any $\ell_1,\ell_2\in (\{1,2\})^2$,
\begin{equation}
\label{def:newtensor}
[\mat{J}_X]_{\ell_1,\ell_2}=\int_{\vect{\Theta}\in\mathbb{S}^{1}} {\Theta}_{\ell_1}{\Theta}_{\ell_2}~C_X(\vect{\Theta})\dif \vect{\Theta}\;,
\end{equation}
with the notation $\vect{\Theta}=(\Theta_1,\Theta_2)$.\\
$\mat{J}_X$ is a nonnegative definite $2\times2$ matrix depending only on the anisotropy function $C_X$, and will be called the 
\underline{structure tensor} of $X$. 
\end{theorem}
\begin{proof} The assumptions on $\psi$ insure that the function $g_{\ell}$ defined by $g_{\ell}(\vect{\xi})=\overline{\widehat{\mathcal{R}_{\ell}\psi}(\vect{\xi})}f_X^{1/2}(\vect{\xi})$ is square-integrable; that is, the stochastic integral $\int_{ \mathbb{R}^2}^{} g_{\ell}(\vect{\xi})\ope{\widehat{W}}(\diff \vect{\xi})$ is well defined and the following operations lead to the a.s. existence of the Gaussian variables $\ip{X}{\mathcal{R}_{\ell}\psi}$, for $\ell =1,2$:
\begin{align*}
\int_{\mathbb{R}^2}
\overline{\widehat{\mathcal{R}_{\ell}\psi}(\vect{\xi}) }
f_X^{1/2}(\vect{\xi})\ope{\widehat{W}}(\diff \vect{\xi})
&=\int_{\mathbb{R}^2}
\overline{ \left(\widehat{\mathcal{R}_{\ell}\psi}(\vect{\xi})-\widehat{\mathcal{R}_{\ell}\psi}(\vect{0})\right)}
 f_X^{1/2}(\vect{\xi})\widehat{\ope{W}}(\diff \vect{\xi})\;,\\
 &=\int_{\mathbb{R}^2} \left(\int_{\mathbb{R}^2}\expj{\ip{\vect{x}}{\vect{\xi}}}
\overline{\mathcal{R}_{\ell}\psi(\vect{x})}
\diff \vect{x}-\int_{\mathbb{R}^2}
\overline{\mathcal{R}_{\ell}\psi(\vect{x})}
\diff \vect{x}\right)f_X^{1/2}(\vect{\xi})\widehat{\ope{W}}(\diff \vect{\xi})\;,\\
&=\int_{\mathbb{R}^2}\left(\int_{\mathbb{R}^2}\left(\expj{\ip{\vect{x}}{\vect{\xi}}}-1\right)f_X^{1/2}(\vect{\xi})\widehat{\ope{W}}(\diff \vect{\xi})\right)
\overline{\mathcal{R}_{\ell}\psi(\vect{x})}
\diff \vect{x}\;,\\
&=\ip{X}{\mathcal{R}_{\ell}\psi}\;.
\end{align*}
The first equality follows since $\widehat{\mathcal{R}_{\ell}\psi}(\vect{0})=0$ by the moment assumption on $\psi$ and the third equality comes from the classical stochastic Fubini Theorem (see \cite{pipiras2010regularization}), which holds since
\begin{align*}
&\int_{\mathbb{R}^2} \left(\int_{\mathbb{R}^2}|\expj{\ip{\vect{x}}{\vect{\xi}}}-1|^2 f_X(\vect{\xi})|\psi(\vect{x})|^2\diff\vect{\xi}\right)^{1/2}\diff \vect{x}\\
&=\int_{\mathbb{R}^2} \left(\int_{\mathbb{R}^2}|\expj{\ip{\vect{x}}{\vect{\xi}}}-1|^2 f_X(\vect{\xi})\diff\vect{\xi}\right)^{1/2}|\psi(\vect{x})|\diff \vect{x}<\infty\;,
\end{align*}
by the integrability properties of $\psi$ and the existence of the stochastic integral defining $X$. 

The covariance matrix is then easily computed:
by definition of the Riesz transform, one has for any  $\ell_1,\ell_2 \in(\{1,2\})^2$,
\[
\mathbb{E}[\ip{X}{\mathcal{R}_{\ell_1}\psi}
\overline{\ip{X}{\mathcal{R}_{\ell_2}\psi}}]=\int_{\mathbb{R}^2} \frac{{\xi}_{\ell_1}{\xi}_{\ell_2}}{\norme{\vect{\xi}}^2}~\abs{\widehat{\psi}(\vect{\xi})}^2 f_X(\vect{\xi})\dif \vect{\xi}\;.
\]
We set $\vect{\xi}=r\vect{\Theta}$ with $(r,\vect{\Theta})\in \mathbb{R}^+\times \mathbb{S}^{1}$. Hence, remembering that $\psi$ is isotropic, that is $\widehat \psi (\vect{\xi})= \varphi(r)$ and using	 the expression of $f_X$ given in Proposition \ref{pro:spectral-density-ss}, we obtain
\begin{align*}
\mathbb{E}[\ip{X}{\mathcal{R}_{\ell_1}\psi}
\overline{\ip{X}{\mathcal{R}_{\ell_2}\psi}}]
&=\int_{\vect{\Theta}\in\mathbb{S}^{1}}\int_{r=0}^{+\infty}\frac{1}{r^{2H+2}}~{\Theta}_{\ell_1}{\Theta}_{\ell_2}~\abs{ \varphi(r)}^2C_X(\vect{\Theta}) ~r\dif r  \dif\vect{\Theta}\;,\\
&=\left[\int_{r=0}^{+\infty}\frac{\lvert  \varphi(r)\rvert ^2}{r^{2H+1}}\dif r\right]
\left[\int_{\vect{\Theta}\in\mathbb{S}^{1}} {\Theta}_{\ell_1}{\Theta}_{\ell_2}~C_X(\vect{\Theta})\dif \vect{\Theta}\right]\;.
\end{align*}
Theorem~\ref{pro:wav:based:orientation} then follows.
\end{proof}

\noindent We now define the structure tensor of $X$, its orientation and its coherency index.
\begin{definition}[Orientation and coherency]\label{def:orientation}
The matrix $\mat{J}_X$ defined in Theorem~\ref{pro:wav:based:orientation} (\ref{def:newtensor}) is called the structure tensor of $X$. Let $\lambda_1,\lambda_2$ be its two eigenvalues. The coherency index of $X$ is defined as
\[
\chi(X)=\frac{|\lambda_2-\lambda_1|}{\lambda_1+\lambda_2}\;.
\]
An orientation $\vect{\vec n}$ is any unit eigenvector associated to the largest eigenvalue of $\mat{J}_X$.
\end{definition}

\subsection{Examples\\}\label{s:examples}

We present below several examples of self-similar Gaussian fields, and every time we make explicit their structure tensor and their intrinsic orientation. 

\subsubsection{Example 1: Fractional Brownian Field (FBF)\\}\label{s:wav:based:orientation:ssi:ex1}

We begin with the Fractional Brownian Field (FBF) defined in~\cite{Lind93}. This random field is the isotropic multidimensional extension of the famous Fractional Brownian Motion defined by \cite{MVN68}. 
Its harmonizable representation is:
\[
X(\vect{x})=\int_{\mathbb{R}^2} \frac{1}{\norme{\vect{\xi}}^{H+1}} ~(\expj{\ip{\vect{x}}{\vect{\xi}}}-1)\frac{\ope{\widehat{W}}(\diff \vect{\xi})}{2\pi}\;.
\]
We can easily check ($C_X\equiv \frac{1}{2\pi}$), setting $\vect{\Theta}=(\cos \theta, \sin \theta) \in \S^1$, that
\[
[\mat{J}_X]_{1,1}= \frac{1}{2\pi}\int_0^{2\pi}\cos^2 \theta\diff \theta= \frac{1}{2},
\quad [\mat{J}_X]_{1,2}=[\mat{J}_X]_{2,1}= \frac{1}{2\pi}\int_0^{2\pi}\cos \theta \sin \theta\diff \theta=0\;, 
\]
and
\[
[\mat{J}_X]_{2,2}= \frac{1}{2\pi}\int_0^{2\pi}\sin^2 \theta\diff \theta= \frac{1}{2}\;,
\]
which directly implies
\[
\mat{J}_X=\begin{pmatrix}\frac{1}{2} & 0 \\ 0 &\frac{1}{2}\end{pmatrix},\quad \chi(X)=0\;.
\]
Any unit vector is thus an orientation of the FBF, which is clearly consistent with its isotropic nature.
\subsubsection{Example 2: Elementary fields (EF)\\}\label{s:wav:based:orientation:ssi:ex2}

 We focus here to a special case of $H$-sssi fields called {\it elementary fields} (EF), whose anisotropy function is a cone with an orientation $\alpha_0$ and a half-width $\delta>0$. It admits the following harmonizable representation
($\alpha_0\in (-\pi/2,\pi/2)$, $\delta >0$):
\begin{equation}
\label{eq:lafbf}
X_{\alpha_0,\delta}(\vect{x})=\int_{\mathbb{R}^2} (\expj{\ip{\vect{x}}{\vect{\xi}}}-1)f_X^{1/2}(\vect{\xi})\widehat{\ope{W}}(\diff \vect{\xi})\;,
\end{equation}
where 
\begin{align}
\label{eq:elementary}
&f_X(\vect{\xi})=\norme{\vect{\xi}}^{-2H-2}C_X\left(\frac{\vect{\xi}}{\norme{\vect{\xi}}}\right)\;,\\
&C_X(\vect{\Theta})=\frac{1}{4\delta}\left(\mathds{1}_{[\alpha_0-\delta,\alpha_0+\delta]}(\arg \vect{\Theta})+\mathds{1}_{[\alpha_0+\pi-\delta,\alpha_0+\pi+\delta]}(\arg \vect{\Theta})\right)\;.\label{eq:CXcone}
\end{align}
Let us compute its structure tensor $\mat{J}_X$, using the definition given in Theorem~\ref{pro:wav:based:orientation}.

\begin{remark}
The contribution of each of the portions of the cone is identical in the integrals defining the coefficients of $\mat{J}_X$. Thus, in the following computations we only consider one portion whose contribution is doubled.
\end{remark}

We start with the diagonal terms:
\[
[\mat{J}_X]_{1,1}=\int_{\vect{\Theta}\in\mathbb{S}^{1}} {\Theta}_{1}^2 ~C_X(\vect{\Theta})\diff \vect{\Theta}
= \frac{1}{2\delta} \int_{\alpha_0-\delta}^{\alpha_0+\delta}\cos^2 \theta \diff \theta
=\frac{1}{2}+\frac 1 2 \cos(2\alpha_0)\frac{\sin(2\delta)}{2\delta}\;.
\]
By the relation $\cos^2\theta+\sin^2\theta=1$, we get as well
\[
[\mat{J}_X]_{2,2}=\frac{1}{2}-\frac 1 2 \cos(2\alpha_0)\frac{\sin(2\delta)}{2\delta}\;.
\]
The last terms are computed as follows:
\[
[\mat{J}_X]_{1,2}=[\mat{J}_X]_{2,1}= \frac{1}{2\delta} \int_{\alpha_0-\delta}^{\alpha_0+\delta}\cos \theta\sin \theta \diff \theta
=\frac 1 2\sin(2\alpha_0)\frac{\sin(2\delta)}{2\delta}\;.
\]
Hence the structure tensor of the elementary field is
\[
\mat{J}_X=\begin{pmatrix}
\frac{1}{2}+\frac 1 2 \cos(2\alpha_0)\frac{\sin(2\delta)}{2\delta} & \frac 1 2\sin(2\alpha_0)\frac{\sin(2\delta)}{2\delta} \\ 
\frac 1 2\sin(2\alpha_0)\frac{\sin(2\delta)}{2\delta} & \frac{1}{2}-\frac 1 2 \cos(2\alpha_0)\frac{\sin(2\delta)}{2\delta}\end{pmatrix}\;.
\]
Remark that $\mat{J}_X$ diagonalizes as
\[
\mat{J}_X=\begin{pmatrix}
\cos \alpha_0 & -\sin \alpha_0 \\ \sin \alpha_0 & \cos \alpha_0\end{pmatrix} 
\begin{pmatrix}\frac 1 2+\frac 1 2\frac{\sin(2\delta)}{2\delta} & 0 \\ 0 & \frac 1 2-\frac 1 2\frac{\sin(2\delta)}{2\delta}\end{pmatrix}
\begin{pmatrix}\cos \alpha_0 & -\sin \alpha_0 \\ \sin \alpha_0 & \cos \alpha_0\end{pmatrix}^{\T}\;.
\]
Denoting $\lambda_1\geq \lambda_2$ the two eigenvalues of $\mat{J}_X$,
\[
\lambda_1=\frac 1 2+\frac 1 2\frac{\sin(2\delta)}{2\delta} \mbox{ and }
\lambda_2=\frac 1 2-\frac 1 2\frac{\sin(2\delta)}{2\delta}\;,
\]
the coherency index of $X$ is given by
\[
\chi(X)=\frac{\lambda_1-\lambda_2}{\lambda_1+\lambda_2}=\frac{\sin(2\delta)}{2\delta}\;.
\]
An orientation of the elementary field $X_{\alpha_0,\delta}$ being a unit eigenvector associated with $\lambda_1$, we obtain  
\[
\vect{\vec n}=\binom{\cos \alpha_0}{\sin \alpha_0}\;.
\]

\begin{remark}
This is in accordance with what we observe when performing simulations. For $\delta=\pi/2$ we recover the classical FBF which is isotropic and we get $\chi(X)=\sin(\pi)/\pi=0$. Conversely, notice that $\chi(X)$ tends to $1$ when $\delta\to 0$, meaning that the coherency is strong when the cone of admissible directions is tight around the angle $\alpha_0$. Note that in the limite case $\delta\to 0$, the density function $C_X$ tends to the Dirac measure along the line $\arg \vect{\Theta}=\alpha_0$, and the structure tensor degenerates to 
\begin{equation*}
\mat{J}_X= 
\begin{pmatrix}\cos^2 \alpha_0 & \cos \alpha_0\sin \alpha_0\\  
\cos \alpha_0\sin  \alpha_0 & \sin^2 \alpha_0
\end{pmatrix}\;,
\end{equation*}
which diagonalizes as
\[
\mat{J}_X= \mat{R}_{\alpha_0}\begin{pmatrix}1 & 0\\ 0 & 0\end{pmatrix}\mat{R}_{\alpha_0}^{\T}\;.
\]
leading to the same orientation vector $(\cos \alpha_0, \sin \alpha_0)^\T$. Notice also that in the limit case, the structure tensor is not invertible anymore.
\end{remark}
\subsubsection{Example 3: sum of two elementary fields\\}\label{s:wav:based:orientation:ssi:ex3}

To understand how our notion of  Riesz-based orientation can be adapted to the setting of multiple oriented random fields, we consider the following toy model ($\alpha_0, \alpha_1 \in (-\pi/2,\pi/2)$, $\delta>0$):
\[
X=X_{\alpha_0,\delta}+X_{\alpha_1,\delta}\;.
\] 
The Gaussian field $X$ is then the sum of two elementary fields of same regularity $H$ and of respective directions $\alpha_0\neq\alpha_1$ (as defined in Example 2 above).
We assume that $\delta < |\alpha_1-\alpha_0|/2$. This last condition implies in particular that $[\alpha_0-\delta,\alpha_0+\delta]\cap [\alpha_1-\delta,\alpha_1+\delta]=\varnothing$ and the spectral densities have disjoint supports, that is $X_{\alpha_0,\delta}$ and $X_{\alpha_1,\delta}$ are supposed to be independent. 

Then we have
\[
\mat{J}_X=\begin{pmatrix}
1+\frac 1 2\frac{\sin(2\delta)}{2\delta}( \cos(2\alpha_0)+\cos(2\alpha_1)) & \frac 1 2\frac{\sin(2\delta)}{2\delta}(\sin(2\alpha_0)+\sin(2\alpha_1)) \\ \frac 1 2\frac{\sin(2\delta)}{2\delta}(\sin(2\alpha_0)+\sin(2\alpha_1)) & 1-\frac 1 2\frac{\sin(2\delta)}{2\delta}( \cos(2\alpha_0)+\cos(2\alpha_1))
\end{pmatrix}\;.
\]
As previously the matrix $\mat{J}_X$ diagonalizes as
\[
\mat{J}_X=\mat{R}_{(\alpha_0+\alpha_1)/2} \begin{pmatrix}
1+\frac{\sin(2\delta)}{2\delta}\cos(\alpha_0-\alpha_1) & 0 \\ 0 & 1-\frac{\sin(2\delta)}{2\delta}\cos(\alpha_0-\alpha_1)\end{pmatrix}\mat{R}_{(\alpha_0+\alpha_1)/2}^{\T}\;,
\]
where we denoted
\[
\mat{R}_{(\alpha_0+\alpha_1)/2}=\begin{pmatrix}\cos \left(\frac{\alpha_0+\alpha_1}{2}\right) & -\sin \left(\frac{\alpha_0+\alpha_1}{2}\right) \\ \sin \left(\frac{\alpha_0+\alpha_1}{2}\right) & \cos \left(\frac{\alpha_0+\alpha_1}{2}\right)\end{pmatrix}.
\]
Thus, the coherency index is
\[
\chi(X)=\frac{\sin(2\delta)}{2\delta}\cos(\alpha_0-\alpha_1)\;,
\]
which tends to $\cos(\alpha_0-\alpha_1)$ when $\delta\to 0$. One can also observe that the closer $\alpha_0$ and $\alpha_1$ are, the more coherent the random field is and then admits a dominant orientation. An orientation of $X$ is given by  
\[
\vect{\vec n}=\binom{\cos \left(\frac{\alpha_0+\alpha_1}{2}\right)}{\sin \left(\frac{\alpha_0+\alpha_1}{2}\right)}\;.
\]
We then recover a dominant orientation, related to half the angle of the two orientations. 
\subsubsection{Example 4: deformation of an elementary field by a linear transform\\}\label{s:wav:based:orientation:ssi:ex4}

Let $\mat{L}$ be an invertible $2\times 2$ matrix and $\alpha_0\in (-\pi/2,\pi/2)$, $\delta>0$. Set
\begin{equation}\label{e:XL}
X_{\mat{L}}(\vect{x})=X_{\alpha_0,\delta}(\mat{L}^{-1}\vect{x})\;.
\end{equation}
Thanks to Proposition~\ref{pro:linear}, we have an explicit expression for the spectral density of $X_{\mat{L}}$:
\begin{align*}
f_{X_{\mat{L}}}(\vect{\xi})
&=\frac{\lvert \mathrm{det}(\mat{L})\rvert}{2\delta} ~ \lVert \mat{L}^{\T} \vect{\xi}\rVert^{-2H-2}\mathds{1}_{[\alpha_0-\delta, \alpha_0+\delta]}(\mathrm{arg}(\mat{L}^{\T}\vect{\xi}))\;,\\
&=\frac{\lvert \mathrm{det}(\mat{L})\rvert}{2\delta} ~\lVert \mat{L}^{\T} \vect{\xi}\rVert ^{-2H-2}\mathds{1}_{[\tan(\alpha_0-\delta),\tan(\alpha_0+\delta)]}((\mat{L}^{\T}\vect{\xi})_2/(\mat{L}^{\T}\vect{\xi})_1)\;.
\end{align*}
Since the matrix $\mat{L}$ is invertible, it admits a Singular Value Decomposition $\mat{L}=\mat{U} \mat{\Delta} \mat{V}^{\T}$ where $\mat{U},\mat{V}$ are two orthogonal matrices and $\mat{\Delta}$ a diagonal matrix with nonnegative eigenvalues. One can then deduce the general case of an invertible matrix $\mat{L}$ from three specific ones: $\mat{L}\in O^+_2(\mathbb{R})$, $\mat{L}\in O^-_2(\mathbb{R})$ and $\mat{L}$ diagonal with nonnegative eigenvalues. Before deriving the general form of an orientation vector, we will consider each term of the SVD.

\medskip

\noindent {\bf(i)} We first consider the case where $\mat{L}$ is an orthogonal matrix of the form 
\[
\mat{L}=\mathbf{R}_{\theta_0}=\begin{pmatrix}\cos \theta_0&-\sin \theta_0 \\ \sin \theta_0 &\cos \theta_0\end{pmatrix}\;,
\]
one has 
\[
f_{X_{\mat{L}}}(\vect{\xi})=\frac{1}{2\delta}~\lVert \vect{\xi}\rVert^{-2H-2}\mathds{1}_{[\alpha_0+\theta_0-\delta,\alpha_0+\theta_0+\delta]}(\mathrm{arg}~\vect{\xi})\;,
\]
which implies that one can choose as orientation for $X_{\mat{L}}$ the unit vector
\begin{equation}\label{e:orth1}
\vect{\vec n}_{\mat{L}}=\binom{\cos \left(\alpha_0+\theta_0\right)}{\sin \left(\alpha_0+\theta_0\right)}=\mat{L} \binom{\cos \alpha_0}{\sin\alpha_0}=(\mat{L}^{-1})^\T \binom{\cos \alpha_0}{\sin \alpha_0}\;,
\end{equation}
since any orthogonal matrix equals the transpose of its inverse.

\medskip

\noindent {\bf(ii)} We now deal with the case where $\mat{L}$ is an orthogonal matrix of the form 
\[
\mat{L}=\begin{pmatrix}\cos \theta_0&\sin \theta_0 \\ \sin \theta_0&-\cos \theta_0 \end{pmatrix}=\begin{pmatrix}\cos \theta_0&-\sin \theta_0 \\ \sin \theta_0&\cos \theta_0 \end{pmatrix}\times\begin{pmatrix}1&0 \\ 0&-1 \end{pmatrix}\;.
\] 
One has 
\[
f_{X_{\mat{L}}}(\vect{\xi})=\frac{1}{2\delta} \lVert \vect{\xi}\rVert^{-2H-2}\mathds{1}_{[\theta_0-\alpha_0-\delta,\theta_0-\alpha_0+\delta]}(\mathrm{arg}\, \vect{\xi} )\;,
\]
which implies that one can choose as orientation for $X_{\mat{L}}$ the unit vector
\begin{equation}\label{e:orth2}
\vect{\vec n}_{\mat{L}}=\binom{\cos \left(\theta_0-\alpha_0\right)}{\sin \left(\theta_0-\alpha_0\right)}=\mat{L}\binom{\cos \alpha_0}{\sin \alpha_0}=(\mat{L}^{-1})^\T \binom{\cos \alpha_0}{\sin \alpha_0}\;,
\end{equation}
since as above any orthogonal matrix equals the transpose of its inverse.
\medskip

\noindent {\bf(iii)} We finally deal with the case where $\mat{L}$ is a diagonal matrix  $\mat{L}=\begin{pmatrix}\lambda_1&0\\0&\lambda_2\end{pmatrix}$, with $\lambda_1,\lambda_2 > 0$. In this case observe that the condition
\[
\tan(\alpha_0-\delta)<\frac{(\mat{L}^{\T}\vect{\xi})_2}{(\mat{L}^{\T}\vect{\xi})_1}<\tan(\alpha_0+\delta)\;,
\]
is equivalent to $\frac{\lambda_1}{\lambda_2}\tan(\alpha_0-\delta)<\frac{\xi_2}{\xi_1}<\frac{\lambda_1}{\lambda_2}\tan(\alpha_0+\delta)$, that is to
\[
\underline{\delta}_\mat{\Delta}<\arg\, \vect{\xi}<\overline{\delta}_\mat{\Delta}\;,
\]
with $\underline{\delta}_\mat{\Delta}=\mathrm{arctan}(\frac{\lambda_1}{\lambda_2}\tan(\alpha_0-\delta))$ and $\overline{\delta}_\mat{\Delta}=\mathrm{arctan}(\frac{\lambda_1}{\lambda_2}\tan(\alpha_0+\delta))$. Hence,
\begin{align*}
&f_{X_{\mat{L}}}(\vect{\xi})=\frac{\lvert \mathrm{det}(\mat{L})\rvert}{2\delta}~ \lVert \mat{L}^{\T} \vect{\xi}\rVert^{-2H-2} \mathds{1}_{[\underline{\delta}_\mat{\Delta},\overline{\delta}_\mat{\Delta}]}(\arg \vect{\xi})\;,\\
&C_{X_{\mat{L}}}(\vect{\Theta})=\frac{\lvert \mathrm{det}(\mat{L})\rvert}{2\delta} \lVert \mat{L}^{\T} \vect{\Theta}\rVert^{-2H-2} \mathds{1}_{[\underline{\delta}_\mat{\Delta},\overline{\delta}_\mat{\Delta}]}(\arg \vect{\Theta})\;.
\end{align*}
Now, recalling that:
$$[\mat{J}_{X_{\mat{L}}}]_{\ell_1,\ell_2}=\int_{\vect{\Theta}\in\mathbb{S}^{1}} {\Theta}_{\ell_1}{\Theta}_{\ell_2}~C_{X_{\mat{L}}}(\vect{\Theta})\dif \vect{\Theta}~, $$
we obtain
\[
\frac{[\mat{J}_{X_{\mat{L}}}]_{1,1}}{\lvert \mathrm{det}(\mat{L})\rvert}= \frac{1}{2\delta}
\int_{\underline{\delta}_\mat{\Delta}}^{\overline{\delta}_\mat{\Delta}} \cos^2 \theta~C_{X_{\mat{L}}}(\cos \theta,\sin \theta)\diff \theta=  \frac{1}{2\delta}
\int_{\underline{\delta}_\mat{\Delta}}^{\overline{\delta}_\mat{\Delta}} \frac{\cos^2 \theta}{(\lambda_1^2 \cos^2 \theta+\lambda_2^2 \sin^2 \theta)^{H+1}}\diff \theta\;,
\]
\[
\frac{[\mat{J}_{X_{\mat{L}}}]_{2,2}}{\lvert \mathrm{det}(\mat{L})\rvert}= 
 \frac{1}{2\delta} \int_{\underline{\delta}_\mat{\Delta}}^{\overline{\delta}_\mat{\Delta}}\sin^2 \theta~ C_{X_{\mat{L}}}(\cos \theta,\sin \theta)\diff \theta= \frac{1}{2\delta}
 \int_{\underline{\delta}_\mat{\Delta}}^{\overline{\delta}_\mat{\Delta}} \frac{\sin^2 \theta}{(\lambda_1^2 \cos^2 \theta+\lambda_2^2 \sin^2 \theta)^{H+1}}\diff \theta\;,
\]
\[
\frac{[\mat{J}_{X_{\mat{L}}}]_{1,2}}{\lvert \mathrm{det}(\mat{L})\rvert}= \frac{1}{2\delta}
\int_{\underline{\delta}_\mat{\Delta}}^{\overline{\delta}_\mat{\Delta}} \cos \theta\sin \theta~C_{X_{\mat{L}}}(\cos \theta,\sin \theta)\diff \theta=  \frac{1}{2\delta}\int_{\underline{\delta}_\mat{\Delta}}^{\overline{\delta}_\mat{\Delta}} \frac{\cos\theta\sin \theta}{(\lambda_1^2 \cos^2 \theta+\lambda_2^2 \sin^2 \theta)^{H+1}}\diff \theta\;.
\]

\medskip

\noindent
Now, let us define $(u_1(\theta),u_2(\theta))=(\cos \theta,\sin \theta)$ and let us introduce the functions
\begin{align*}
 f_{\ell_1,\ell_2}&:\theta\mapsto u_{\ell_1}(\theta)u_{\ell_2}(\theta)~(\lambda_1^2u_1(\theta)^2+\lambda_2^2u_2(\theta)^2)^{-H-1}\;,
& F_{\ell_1,\ell_2} : x\mapsto   \int_0^x f_{\ell_1,\ell_2}(\theta)\diff \theta,  \\
  \nu &: \alpha\mapsto \arctan\left(\frac{\lambda_1}{\lambda_2}\tan \alpha\right),
  & G_{\ell_1,\ell_2} : \alpha\mapsto F_{\ell_1,\ell_2}(\nu (\alpha))\;. 
 \end{align*}
 Each term of the structure tensor writes
\[
\frac{[\mat{J}_{X_{\mat{L}}}]_{\ell_1,\ell_2}}{\lvert \mathrm{det}(\mat{L})\rvert}=\frac{G_{\ell_1,\ell_2}(\alpha_0+\delta)-G_{\ell_1,\ell_2}(\alpha_0-\delta)}{2\delta}\;.
\]
When the parameter $\delta$ is small, we have
\begin{align*}
\frac{G_{\ell_1,\ell_2}(\alpha_0+\delta)-G_{\ell_1,\ell_2}(\alpha_0-\delta)}{2\delta}&=G_{\ell_1,\ell_2}'(\alpha_0) + \frac{\delta^2}{12}G_{\ell_1,\ell_2}'''(\alpha_0) + O(\delta^4)\;
,\\
&=\nu '(\alpha_0)~F_{\ell_1,\ell_2}'(\nu (\alpha_0))+ O(\delta^2)\;,\\
&=\nu '(\alpha_0)~f_{\ell_1,\ell_2}(\nu (\alpha_0))+ O(\delta^2)\;.
\end{align*}
Hence,
\[
\frac{G_{\ell_1,\ell_2}(\alpha_0+\delta)-G_{\ell_1,\ell_2}(\alpha_0-\delta)}{2\delta}
= \frac{\lambda_1\lambda_2}{\lambda_2^2\cos^2\alpha_0+\lambda_1^2\sin^2\alpha_0}\times 
\frac{u_{\ell_1}(\nu(\alpha_0))~u_{\ell_2}(\nu(\alpha_0))}{(\lambda_1^2\cos^2(\nu(\alpha_0))+\lambda_2^2\sin^2(\nu(\alpha_0)))^{H+1}}+ O(\delta^2)\;.
\]
Let us define $C_{H,\lambda_1,\lambda_2,\alpha_0}=\lambda_1^2 \lambda_2^2 (\lambda_2^2\cos^2\alpha_0+\lambda_1^2\sin^2\alpha_0)^{-1}(\lambda_1^2\cos^2(\nu(\alpha_0))+\lambda_2^2\sin^2(\nu(\alpha_0)))^{-H-1}$.
Then, one has for small $\delta$,
\begin{equation*}
\mat{J}_{X_{\mat{L}}} = C_{H,\lambda_1,\lambda_2,\alpha_0}~ 
\begin{pmatrix}\cos^2(\nu (\alpha_0)) & \cos (\nu (\alpha_0))\sin (\nu (\alpha_0))\\  \cos (\nu (\alpha_0))\sin (\nu (\alpha_0)) & \sin^2(\nu (\alpha_0))\end{pmatrix}+ O(\delta^2)\;,
\end{equation*}
which can be written as
\[
\mat{J}_{X_{\mat{L}}} = C_{H,\lambda_1,\lambda_2,\alpha_0} \mat{R}_{\nu(\alpha_0)}\begin{pmatrix}1 & 0\\ 0 & 0\end{pmatrix}\mat{R}_{\nu(\alpha_0)}^{\T}+ O(\delta^2)\;.
\]
Therefore $\vect{\vec n}_{\mat{L}}=(\cos \nu(\alpha_0),\sin \nu (\alpha_0))^{\T}$ can be viewed as an approximate eigenvector of $\mat{J}_{X_{\mat{L}}}$ associated to its largest eigenvalue, and then an orientation of $X_{\mat{L}}$.
Finally, we remark that
\[
\nu (\alpha_0)=\arctan \left( \frac{\lambda_1 \sin \alpha_0}{\lambda_2\cos \alpha_0}\right)=\arg\left[ \begin{pmatrix} \lambda_2 \cos \alpha_0\\ \lambda_1 \sin \alpha_0\end{pmatrix}\right]=\arg\left[ \begin{pmatrix}\lambda_2 & 0\\ 0 & \lambda_1\end{pmatrix}\times \binom{\cos \alpha_0}{\sin\alpha_0}\right]\;.
\]
Consequently, an approximate (up to $\delta^2$) orientation of $X_{\mat{L}}$ is in this case
\[
\vect{\vec n}_{\mat{L}}=\frac{\begin{pmatrix}\lambda_2 & 0\\ 0 & \lambda_1\end{pmatrix}\vect{\vec n}}{\left \lVert \begin{pmatrix}\lambda_2 & 0\\ 0 & \lambda_1\end{pmatrix}\vect{\vec n}\right \rVert}\mbox{ with }\vect{\vec n}=\binom{\cos \alpha_0}{\sin\alpha_0}\;.
\]
Observe that $\begin{pmatrix}\lambda_2 & 0\\ 0 & \lambda_1\end{pmatrix}$ is the adjugate matrix of $\mat{L}$. Then, dividing the numerator and denominator of the last equation by $\mathrm{det}(\mat{L})=\lambda_1 \lambda_2$, we get
\begin{equation}\label{e:diag}
\vect{\vec n}_{\mat{L}}=\frac{\mat{L}^{-1} \vect{\vec n}}{\lVert \mat{L}^{-1} \vect{\vec n}\rVert}=\frac{(\mat{L}^{-1})^\T \vect{\vec n}}{\lVert (\mat{L}^{-1})^\T \vect{\vec n}\rVert}\;,
\end{equation}
since the diagonal matrix $\mat{L}^{-1}$ equals its transpose.

We now gather~(\ref{e:orth1}), (\ref{e:orth2}) and~(\ref{e:diag}): using the existence of the SVD for every matrix, we deduce the following proposition.
\begin{proposition}\label{prop:orientation_vector}
Let $\mat{L}$ be an invertible $2\times 2$ matrix and $X_{\mat{L}}$ the Gaussian field defined by~(\ref{e:XL}). Set $\vect{\vec n}=\binom{\cos \alpha_0}{\sin\alpha_0}$ the orientation vector of $X$. Then the unit vector 
\[
\vect{\vec n}_{\mat{L}}=\frac{(\mat{L}^{-1})^{\T}\vect{\vec n}}{\lVert (\mat{L}^{-1})^{\T} \vect{\vec n}\rVert}\;,
\]
is an approximate (up to $\delta^2$) orientation vector of $X_{\mat{L}}$.
\end{proposition}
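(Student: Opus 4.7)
The plan is to reduce the general invertible matrix $\mat{L}$ to the three special cases (i), (ii) and (iii) already treated, using the singular value decomposition $\mat{L}=\mat{U}\mat{\Delta}\mat{V}^{\T}$ with $\mat{U},\mat{V}$ orthogonal and $\mat{\Delta}=\mathrm{diag}(\lambda_1,\lambda_2)$, $\lambda_1,\lambda_2>0$. The key ingredient is the composition rule
\[
X_{\mat{A}\mat{B}}(\vx)=X\bigl((\mat{A}\mat{B})^{-1}\vx\bigr)=X(\mat{B}^{-1}\mat{A}^{-1}\vx)=(X_{\mat{B}})_{\mat{A}}(\vx)\;,
\]
which lets me write the target field as the chain of deformations
\[
X_{\mat{L}}=\bigl(\bigl(X_{\mat{V}^{\T}}\bigr)_{\mat{\Delta}}\bigr)_{\mat{U}}\;,
\]
starting from $X=X_{\alpha_0,\delta}$ with exact orientation $\vect{\vec n}=(\cos\alpha_0,\sin\alpha_0)^{\T}$.

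Next I would track the orientation through each step. Applying the orthogonal $\mat{V}^{\T}$, case (i) or (ii) (depending on the sign of $\det\mat{V}$) yields that $X_{\mat{V}^{\T}}$ is itself an elementary field (its spectral density, computed via Proposition~\ref{pro:linear}, remains an indicator over an arc of the same half-width $\delta$) with exact orientation $\vect{\vec n}^{(1)}=\mat{V}^{\T}\vect{\vec n}$. Case (iii) then applies to $(X_{\mat{V}^{\T}})_{\mat{\Delta}}$ and provides, up to an $O(\delta^2)$ error, the approximate orientation $\vect{\vec n}^{(2)}=\mat{\Delta}^{-1}\vect{\vec n}^{(1)}/\lVert\mat{\Delta}^{-1}\vect{\vec n}^{(1)}\rVert$. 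Finally, the orthogonal $\mat{U}$ is handled exactly by case (i) or (ii), giving $\vect{\vec n}_{\mat{L}}=\mat{U}\vect{\vec n}^{(2)}$.

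To assemble the result I use that $\mat{L}^{-1}=\mat{V}\mat{\Delta}^{-1}\mat{U}^{\T}$ from the SVD, so $(\mat{L}^{-1})^{\T}=\mat{U}\mat{\Delta}^{-1}\mat{V}^{\T}$; combined with $\lVert\mat{U}\mat{\Delta}^{-1}\mat{V}^{\T}\vect{\vec n}\rVert=\lVert\mat{\Delta}^{-1}\mat{V}^{\T}\vect{\vec n}\rVert$ (orthogonality of $\mat{U}$), this produces exactly the formula in the statement, modulo the $O(\delta^2)$ error inherited from the diagonal step. The main difficulty is essentially bookkeeping: I must check that applying case (iii) to the intermediate rotated field is legitimate (which works because that field is again an elementary field with the same $\delta$ but a new angle $\alpha_0'=\alpha_0\pm\theta_V$), and that the $O(\delta^2)$ error is not amplified by the subsequent orthogonal transformation, which is immediate since isometries act trivially on a unit-vector perturbation.
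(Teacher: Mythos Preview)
Your proposal is correct and follows essentially the same route as the paper: decompose $\mat{L}$ via its SVD, track the orientation through the three successive deformations using cases (i)/(ii) for the orthogonal factors and case (iii) for the diagonal one, and then reassemble using $(\mat{L}^{-1})^{\T}=\mat{U}\mat{\Delta}^{-1}\mat{V}^{\T}$. If anything, your version is slightly more careful than the paper's in explicitly noting that the intermediate field $X_{\mat{V}^{\T}}$ is again an elementary field (so that case (iii) genuinely applies) and that the final orthogonal step preserves the $O(\delta^2)$ error.
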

\begin{proof} Let $\mat{L}=\mat{U}\mat{\Delta} \mat{V^T}$ be the SVD singular decomposition of $\mat{L}$,  with $\mat{U}, \mat{V}\in O_2(\mathbb{R})$ and $\mat{\Delta}$ diagonal with nonnegative eigenvalues.\\
 $X_{\mat{L}}(\vect{x})=X(\mat{V}^{-\T}\mat{\Delta}^{-1}\mat{U}^{-1}\vect{x})$ be the Gaussian field defined by~(\ref{e:XL}). Let decompose the three operations as follows	:
 \[
X_{\mat{L}}=\underbrace{\underbrace{X\circ (\mat{V}^\T)^{-1}}_{X_1}\circ \mat{\Delta}^{-1}}_{X_2}\circ \mat{U}^{-1}\;.
\]
Then, since $\mat{V}^\T$ is an orthogonal matrix, we have from (i) and (ii) that the unit orientation vector of $X_1$ is 
\[
\vect{\vec n}_{1} = ((\mat{V}^\T)^{-1})^{\T} \vect{\vec n}\;.
\]
Now from (iii), the unit orientation vector (up to $\delta^2$) of $X_2=X_1\circ \mat{\Delta}^{-1}$ is
\[
\vect{\vec n}_{2} = \frac{(\mat{\Delta}^{-1})^{\T}\vect{\vec n}_{1}}{\lVert (\mat{\Delta}^{-1})^{\T}\vect{\vec n}_{1}\rVert} = \frac{(\mat{\Delta}^{-1})^{\T}((\mat{V}^\T)^{-1})^{\T} \vect{\vec n}}{\lVert (\mat{\Delta}^{-1})^{\T}((\mat{V}^\T)^{-1})^{\T} \vect{\vec n}\rVert}\;.
\]
Finally, from (i) and (ii) again, the unit orientation vector (up to $\delta^2$) of $X_{\mat{L}}=X_2\circ \mat{U}^{-1}$ is
\begin{align*}
\vect{\vec n}_{\mat{L}}
&= \frac{( \mat{U}^{-1})^{\T} \vect{\vec n}_{2}}{\lVert ( \mat{U}^{-1})^{\T} \vect{\vec n}_{2}\rVert}\;,\\
&=\frac{ (\mat{U}^{-1})^{\T} (\mat{\Delta}^{-1})^{\T}((\mat{V}^\T)^{-1})^{\T} \vect{\vec n}}{\lVert (\mat{U}^{-1})^{\T} (\mat{\Delta}^{-1})^{\T}((\mat{V}^\T)^{-1})^{\T} \vect{\vec n}\rVert}\;,\\
&=\frac{ ((\mat{V}^\T)^{-1}\mat{\Delta}^{-1}\mat{U}^{-1})^{\T} \vect{\vec n}}{\lVert ((\mat{V}^\T)^{-1}\mat{\Delta}^{-1}\mat{U}^{-1})^{\T} \vect{\vec n}\rVert}=\frac{ (\mat{U}\mat{\Delta}\mat{V}^\T)^{\T} \vect{\vec n}}{\lVert (\mat{U}\mat{\Delta}\mat{V}^\T)^{\T} \vect{\vec n} \rVert}=\frac{(\mat{L}^{-1})^{\T}\vect{\vec n}}{\lVert (\mat{L}^{-1})^{\T} \vect{\vec n}\rVert}\;.
\end{align*}

\end{proof}

\section{Riesz-based orientation of localizable Gaussian fields}\label{s:wav:based:orientation:loc}

We now extend the notion of intrinsic orientation, defined for self-similar random fields, to a much more general setting, that of localizable Gaussian fields. This will be the purpose of Section~\ref{s:def-orientation-localizable}. In Section~\ref{s:ex-orientation-localizable}, we will apply it to two classes of model with prescribed orientation. First of all, let us recall in Section~\ref{s:localizable} the definition of localizable Gaussian fields.
\subsection{Localizable Gaussian fields}
\label{s:localizable}

We first recall, following~\cite{BJR97,Falc02,Falc03}, the definition of $H$--localizable Gaussian fields.
\begin{definition}[Localizable Gaussian field]
Let $H\in (0,1)$. We say that the random field $Y = \{Y (\vect{x}),\, \vect{x}\in\mathbb{R}^2\}$ is $H$--localizable at $\vect{x_0}\in\mathbb{R}^2$ with \textit{tangent field} (or \textit{local form}) the non-trivial random field
$Y_{\vect{x_0}} = \{Y_{\vect{x_0}}(\vect{x}),\,\vect{x} \in\mathbb{R}^2\}$ if
\begin{equation}\label{e:def:localizable}
\left\{\frac{Y(\vect{x_0} + \rho\vect{h})-Y(\vect{x_0})}{\rho^H}\right\}_{\vect{h}\in \mathbb{R}^2} \overset{d}{\longrightarrow}\left\{Y_{\vect{x_0}}(\vect{h})\right\}_{\vect{h}\in \mathbb{R}^2}\;,
\end{equation}
as $\rho\to 0$, where $\overset{d}{\rightarrow}$ means convergence in distribution, that is the weak convergence for stochastic processes (see \cite{Bill68}). \\
A random field $Y= \{Y (\vect{x}),\, \vect{x}\in\mathbb{R}^2\}$ is said to be localizable if for all $\vect{x}\in\mathbb{R}^2$ it is $H$--localizable for some $H\in (0,1)$.
\end{definition}
In Theorem~3.9 and Corollary~3.10 of \cite{Falc02}, Falconer proved the
following result that we state in the Gaussian case. It enables
to describe the whole class of possible tangent fields of a Gaussian field with continuous sample paths.
\begin{theorem}\label{th:tangent:fields}
Let $X$ be a localizable Gaussian field with continuous sample paths. For almost all $\vect{x_0}$ in $\mathbb{R}^{2}$
the tangent field $Y_{\vect{x_0}}$ of $X$ at $\vect{x_0}$ has stationary increments and is self-similar, that is for some $H\in (0,1)$ and for all $\rho\geq 0$,
\begin{equation}
\{Y_{\vect{x_0}}(\rho \vect{x}),\,\vect{x}\in\mathbb{R}^2\}\overset{(fdd)}{=}\{\rho^H Y_{\vect{x_0}}(\vect{x}),\,\vect{x}\in\mathbb{R}^2\}\;.
\end{equation}
\end{theorem}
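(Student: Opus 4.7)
The plan is to exploit that $X$ being Gaussian with continuous sample paths forces each tangent field $Y_{\vect{x_0}}$ to be Gaussian as well (a weak limit of centered Gaussian vectors is centered Gaussian), so that the whole question reduces to an analysis of the rescaled covariance. The self-similarity part is purely algebraic and in fact holds for every $\vect{x_0}$; the stationarity of increments is what requires the ``for almost every $\vect{x_0}$'' qualifier and constitutes the core of the argument.

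\textbf{Self-similarity (easy step).} Fix $c>0$ and $\vect{h}\in\mathbb{R}^2$. Applying \eqref{e:def:localizable} at scale $c\rho\to 0$ with argument $\vect{h}$ gives
\[
(c\rho)^{-H}\bigl(Y(\vect{x_0}+c\rho\vect{h})-Y(\vect{x_0})\bigr)\overset{d}{\to} Y_{\vect{x_0}}(\vect{h}),
\]
while the very same expression, rewritten as $c^{-H}\rho^{-H}\bigl(Y(\vect{x_0}+\rho(c\vect{h}))-Y(\vect{x_0})\bigr)$ and read as \eqref{e:def:localizable} at scale $\rho\to 0$ with argument $c\vect{h}$, converges to $c^{-H}Y_{\vect{x_0}}(c\vect{h})$. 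Uniqueness of weak limits yields $Y_{\vect{x_0}}(c\vect{h})\overset{(fdd)}{=}c^H Y_{\vect{x_0}}(\vect{h})$, the self-similarity of exponent~$H$.

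\textbf{Stationary increments (main step).} The idea is to identify $Y_{\vect{x_0}}(\cdot+\vect{h})-Y_{\vect{x_0}}(\vect{h})$ with the tangent field of $X$ taken at the shifted base point $\vect{x_0}+\rho\vect{h}$, in the joint limit $\rho\to 0$. Setting $\vect{x_0}'=\vect{x_0}+\rho\vect{h}$,
\[
\frac{Y(\vect{x_0}+\rho(\vect{x}+\vect{h}))-Y(\vect{x_0}+\rho\vect{h})}{\rho^H}=\frac{Y(\vect{x_0}'+\rho\vect{x})-Y(\vect{x_0}')}{\rho^H}\;.
\]
The right-hand side is the defining rescaling of the tangent field at $\vect{x_0}'$, hence ``should'' converge to $Y_{\vect{x_0}'}(\vect{x})$, while the left-hand side is a difference of two ratios both covered by \eqref{e:def:localizable} at $\vect{x_0}$ and thus converges to $Y_{\vect{x_0}}(\vect{x}+\vect{h})-Y_{\vect{x_0}}(\vect{h})$. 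Equating the two limits would yield the required stationarity, provided that $Y_{\vect{x_0}+\rho\vect{h}}$ converges in law to $Y_{\vect{x_0}}$ as $\rho\to 0$. I would formalize this at the level of the rescaled covariance
\[
C_\rho(\vect{x_0};\vect{u},\vect{v})\overset{\mathrm{def}}{=}\rho^{-2H}\,\mathbb{E}\bigl[(X(\vect{x_0}+\rho\vect{u})-X(\vect{x_0}))(X(\vect{x_0}+\rho\vect{v})-X(\vect{x_0}))\bigr],
\]
whose pointwise limit $C_0(\vect{x_0};\vect{u},\vect{v})$ is the covariance of $Y_{\vect{x_0}}$; the question becomes whether this convergence holds jointly in $(\rho,\vect{x_0}')$ along the diagonal $\vect{x_0}'=\vect{x_0}+\rho\vect{h}$.

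\textbf{Main obstacle.} This joint-limit step is precisely what forces the ``almost every~$\vect{x_0}$''. Pointwise-in-$\vect{x_0}$ convergence $C_\rho\to C_0$ is not in itself sufficient; one also needs an approximate continuity of the limit map $\vect{x_0}\mapsto C_0(\vect{x_0};\cdot,\cdot)$. Following Falconer~\cite{Falc02,Falc03}, this limit map is measurable, so by Lusin's theorem it is continuous in restriction to sets of arbitrarily large Lebesgue measure; combined with the Lebesgue density theorem, this produces a full-measure set of base points at which $C_\rho(\vect{x_0}+\rho\vect{h};\vect{u},\vect{v})\to C_0(\vect{x_0};\vect{u},\vect{v})$ as $\rho\to 0$. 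At such an $\vect{x_0}$ the two identifications of the above ratio coincide, giving the stationarity of increments of $Y_{\vect{x_0}}$, and together with Step~1 this concludes the proof.
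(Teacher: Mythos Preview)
The paper does not prove this theorem at all: it is quoted as Theorem~3.9 and Corollary~3.10 of Falconer~\cite{Falc02}, specialized to the Gaussian case, and no argument is given in the text. So there is no ``paper's proof'' to compare your proposal against.

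Your sketch is essentially Falconer's own line of argument. The self-similarity step is the standard rescaling trick and is correct as written (it holds at every $\vect{x_0}$, not just almost every). For the stationarity of increments you correctly identify the key difficulty: one must pass to the limit along the moving base point $\vect{x_0}+\rho\vect{h}$, which requires some form of approximate continuity of $\vect{x_0}\mapsto Y_{\vect{x_0}}$. Invoking measurability of this map, Lusin's theorem, and the Lebesgue density theorem to obtain a full-measure set of good base points is exactly Falconer's strategy. The one place that would need more work in a complete write-up is the diagonal-limit step: you need that at a density point of the Lusin set, the rescaled covariance $C_\rho(\vect{x_0}+\rho\vect{h};\cdot,\cdot)$ actually converges to $C_0(\vect{x_0};\cdot,\cdot)$, which uses both the uniform continuity of $C_0$ on the Lusin set and some control on $C_\rho$ near that set; as stated your proposal gestures at this but does not quite close it.
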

In short, a Gaussian field with continuous sample paths will have at a.e. point, a  ``fractal''  tangent field behaving like a FBF. \\
We now illustrate this notion considering a classical example of Gaussian field with prescribed tangent field: the Multifractional Brownian Field defined in the unidimensional setting in~\cite{LVPel95}, and in the multivariate case in~\cite{BJR97,Herb06}. Such field is localizable at each point, with a fractional Brownian Field for tangent field.
\begin{example}[Multifractional Brownian Field]
\label{ex:MBF}
Let $h:\mathbb{R}^2\to (0,1)$ be a continuously differentiable function whose range is supposed to be a compact interval $[\alpha,\beta]\subset (0,1)$. The Multifractional Brownian Field (MBF) with multifractional function $h$, is the Gaussian field defined by its harmonisable representation as follows
\begin{equation}\label{e:multi}
X_h(\vect{x})=\int_{\mathbb{R}^2}\frac{\expj{\ip{\vect{x}}{\vect{\xi}}}-1}{\lVert \vect{\xi}\rVert^{h(\vect{x})+1}}\ope{\widehat{W}}(\diff \vect{\xi})\;.
\end{equation}
\end{example}

\subsection{Tensor structure and orientation of localizable Gaussian fields\\}\label{s:def-orientation-localizable}

The results of Section~\ref{s:self-similar} together with Theorem~\ref{th:tangent:fields} of section~\ref{s:localizable}, will allow us to define the Riesz-based orientation of any $H$--localizable Gaussian field $X$ almost everywhere.

\begin{definition}[Localizable field orientations]\label{def:localizable_orientation}
Let $X$ be a Gaussian field with continuous sample paths. Assume that $X$ is localizable at the point $\vect{x_0}$, with tangent field $Y_{\vect{x_0}}$, and that $Y_{\vect{x_0}}$ is a self-similar Gaussian field with stationary increments. One then defines:
\begin{itemize}
\item The \underline{local anisotropy function} $C_{\vect{x_0}}$ at $\vect{x_0}$ of the localizable Gaussian field $X$ is the anisotropy function of its tangent field $Y_{\vect{x_0}}$.
\item The  \underline{local structure tensor} $\mat{J}_{\vect{x_0}}$ at $\vect{x_0}$ of the localizable Gaussian field $X$ is the structure tensor of its tangent field $Y_{\vect{x_0}}$.
\item A  \underline{local orientation} at $\vect{x_0}$ of the localizable Gaussian field $X$ is any orientation of its tangent field $Y_{\vect{x_0}}$.
\end{itemize}
\end{definition}
In view of these definitions and of Theorem~\ref{th:tangent:fields}, we deduce that any localizable Gaussian field $X$ admits a local orientation at almost every point $\vect{x_0}\in\mathbb{R}^2$.

\begin{example}[Local structure tensor and orientation of a MBF]
\label{pro:ex:multi}
The Multifractional Brownian Field $X_h$ \eqref{e:multi} admits at each point a structure tensor proportional to the identity matrix. In particular, any unit vector is an orientation of $X_h$.
Indeed, the tangent field $Y_{\vect{x_0}}$ of the MBF at point $\vect{x_0}$ is a FBF of Hurst index $h(\vect{x}_0)$, whose structure tensor has been determined in example \ref{s:wav:based:orientation:ssi:ex1}.
\end{example}

\subsection{Two new models of localizable Gaussian fields with prescribed orientation\\}\label{s:ex-orientation-localizable}

In this section, we will extend our previous works \cite{PCCP14,PCCP15} and define two classes of Gaussian fields with prescribed orientation. The details about numerical aspects and synthesis of the model, as well as comparison between them, will be detailed in the companion paper \cite{polisano2017simulation}. These two models will be derived from two general classes: Generalized Anisotropic Fractional Brownian Fields (GAFBF) and 
Warped  Anisotropic Fractional Brownian Fields (WAFBF)  that we describe in Sections~\ref{s:CFBA} and \ref{s:perrin} respectively.

\subsubsection{First model: Generalized Anisotropic Fractional Brownian Fields (GAFBF)\\}\label{s:CFBA}

We introduce below the definition of Generalized Anisotropic Fractional Brownian Fields (GAFBF) which generalizes the notion of Locally Anisotropic Brownian Fields (LAFBF) introduced in \cite{PCCP14},  and whose simulation will be studied in \cite{polisano2017simulation}. 

Our Gaussian field will be defined from two functions $h$ from $\mathbb{R}^2$ to $[0,1]$ and $C$ from $\mathbb{R}^2\times \mathbb{R}^2$ to $\mathbb{R_+}$ satisfying the following set of assumptions:\\

\noindent{\bf Assumptions~}($\mathcal{H}$)
\,
\begin{itemize}
\item $h$ is a $\beta$--H\"older function, such that  $\displaystyle a=\inf_{\vect{x}\in \mathbb{R}^2}h(\vect{x}) > 0$, $\displaystyle b=\sup_{\vect{x}\in \mathbb{R}^2}h(\vect{x})<1$ and $b<\beta\leq 1$.
\item $(\vect{x},\vect{\xi})\mapsto C(\vect{x},\vect{\xi})$ is bounded, that is $\forall (\vect{x},\vect{\xi})\in \mathbb{R}^2\times \mathbb{R}^2,\, C(\vect{x},\vect{\xi})\leqslant M$.
\item $\vect{\xi}\mapsto C(\vect{x},\vect{\xi})$ is even and homogeneous of degree 0: $\forall \rho>0$, $C(\vect{x},\rho\vect{\xi})=C(\vect{x},\vect{\xi})$.
\item $\vect{x}\mapsto C(\vect{x},\vect{\xi})$ is continuous and satisfies: there exists some $\eta$, with $b<\eta\leq 1$ such that
\begin{equation}
\label{eq:homogene}
 \forall \vect{x}\in \mathbb{R}^2,\quad \sup_{\vect{z}\in B(\vect{0},1)} \|\vect{z}\|^{-2\eta}\int_{\mathbb{S}^1}\left[C(\vect{x}+\vect{z},\vect{\Theta})-C(\vect{x},\vect{\Theta})\right]^2\dif \vect{\Theta}\leq A_{\vect{x}}<\infty \;.
\end{equation}
Morever $\vect{x}\mapsto A_{\vect{x}}$ is bounded on any compact set of $\mathbb{R}^2$.
\end{itemize}

\noindent We now define our model, the Generalized Anisotropic Fractional Brownian Field.
\begin{definition}{Generalized Anisotropic Fractional Brownian Fields (GAFBF)}\\
Let us consider $h:\mathbb{R}^2\to [0,1]$ and $C$ satisfying Assumptions~($\mathcal{H}$). We then define the GAFBF as the following Gaussian field generalizing~\cite{PCCP14,PCCP15} by
\begin{equation}\label{eq:gafbf}
X(\vect{x})\overset{\mathrm{def}}{=}\int_{\mathbb{R}^2}(\expj{\ip{\vect{x}}{\vect{\xi}}}-1)\frac{C(\vect{x},\vect{\xi})}{\lVert \vect{\xi} \rVert^{h(\vect{x})+1}}\ope{\widehat{W}}(\diff \vect{\xi})\;.
\end{equation}
\end{definition}

The main properties of the GAFBF $X$ are summarized in the following propositions.
\begin{theorem}\label{theo:local:orientation:genCFBA}
The GAFBF $X$ \eqref{eq:gafbf} admits at any point $\vect{x_0}\in \mathbb{R}^2$, a tangent field $Y_{\vect{x_0}}$  given by
\begin{equation}\label{eq:Yx0}
Y_{\vect{x_0}}(\vect{x})=\int_{\mathbb{R}^2}(\expj{\ip{\vect{x}}{\vect{\xi}}}-1)f^{1/2}(\vect{x_0},\vect{\xi})\widehat{\ope{W}}(\diff \vect{\xi})=\int_{\mathbb{R}^2}(\expj{\ip{\vect{x}}{\vect{\xi}}}-1)\frac{C(\vect{x_0},\vect{\xi})}{\norme{\vect{\xi}}^{h(\vect{x_0})+1}}\widehat{\ope{W}}(\diff \vect{\xi})\;.\end{equation}
In particular, for each point $\vect{x_0}$, the local anisotropy function of the Gaussian field $X$ at $\vect{x_0}$ is 
\[
C_{\vect{x}_0}:\vect{\Theta}\mapsto C(\vect{x_0},\vect{\Theta})^2\;.
\]
\end{theorem}
\begin{proof}
Theorem~\ref{theo:local:orientation:genCFBA} is proven in Section~\ref{s:proof:local:orientation}.
\end{proof}

\begin{example} \label{ex:LAFBF}
We now derive our first example of Gaussian field with prescribed orientation. The LAFBF introduced in \cite{PCCP14,PCCP15}  is a particular case of GAFBF where the function $C$ is a localized version of the cone \eqref{eq:CXcone} with constant half-width $\delta>0$ and whose orientation may vary spatially, that is where $\alpha : \R^2 \to (-\pi/2,\pi/2)$ is now a continuously differentiable function, which is $2\eta$--Holderian with $b<\eta\leq 1/2$. One can verify that the so-called function $C$ satisfies assumptions $(\mathcal{H})$.  Applying Theorem~\ref{theo:local:orientation:genCFBA}, the corresponding localizable Gaussian field defined by formula~(\ref{eq:gafbf}) admits a tangent field at any points $\vect{x}_0\in \mathbb{R}^2$ and the following cone as local anisotropy:
\begin{equation}
\label{eq:C}
C_{\vect{x}_0}(\vect{\Theta})=\frac{1}{4\delta}\left(\mathds{1}_{[\alpha(\vect{x_0})-\delta,\alpha(\vect{x_0})+\delta]}(\arg \vect{\Theta})+\mathds{1}_{[\alpha(\vect{x_0})+\pi-\delta,\alpha(\vect{x_0})+\pi+\delta]}(\arg \vect{\Theta})\right)\;.
\end{equation}
\end{example}
The above example shows that the tangent field of a LAFBF is actually an elementary field. Then, using the results of Example \ref{s:wav:based:orientation:ssi:ex2} and Definition \ref{def:localizable_orientation}, we immediately deduce the following proposition:
\begin{proposition}\label{prop:lafbf_orientation}
The LAFBF defined in the Example \ref{ex:LAFBF}  admits at each point $\vect{x_0}$ an approximate (up to $\delta^2$) local orientation vector given by
\[
\vect{\vec n}=\begin{pmatrix}\cos \alpha(\vect{x_0})\\ \sin \alpha(\vect{x_0})\end{pmatrix}\;.
\]
\end{proposition}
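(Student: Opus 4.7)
The plan is to reduce the statement to the self-similar case already treated in Example~2 of Section~\ref{s:wav:based:orientation:ssi:ex2}, via the identification of the tangent field provided by Proposition~\ref{pro:local:orientation:genCFBA} and the definition of local orientation given in Definition~\ref{def:localizable_orientation}.

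First, I would verify that $C$ as defined in \eqref{eq:C} satisfies Assumptions~$(\mathcal{H})$. Boundedness, evenness (modulo $\pi$), and homogeneity of degree $0$ in $\vect{\xi}$ are immediate from the indicator form. For the continuity in $\vect{x}$ and the uniform H\"older-type bound \eqref{eq:homogene}, the key observation is that the symmetric difference of the two arcs $[\alpha(\vect{x})-\delta,\alpha(\vect{x})+\delta]$ and $[\alpha(\vect{x}+\vect{z})-\delta,\alpha(\vect{x}+\vect{z})+\delta]$ has arc length bounded by $2\lvert \alpha(\vect{x}+\vect{z})-\alpha(\vect{x})\rvert$. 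Using the $2\eta$--H\"older regularity of $\alpha$ and the fact that $[C(\vect{x}+\vect{z},\vect{\Theta})-C(\vect{x},\vect{\Theta})]^2$ is an indicator of that symmetric difference (up to the factor $1/(2\delta)$), one obtains
\[
\int_{\mathbb{S}^1}\bigl[C(\vect{x}+\vect{z},\vect{\Theta})-C(\vect{x},\vect{\Theta})\bigr]^2\dif \vect{\Theta}\leqslant \frac{1}{\delta}\lvert \alpha(\vect{x}+\vect{z})-\alpha(\vect{x})\rvert\lesssim \lVert \vect{z}\rVert^{2\eta}\;,
\]
locally uniformly in $\vect{x}$, which is precisely \eqref{eq:homogene}.

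Next, I invoke Proposition~\ref{pro:local:orientation:genCFBA}: the tangent field $Y_{\vect{x_0}}$ at $\vect{x_0}$ is the self-similar Gaussian field with stationary increments defined by \eqref{eq:Yx0}, whose spectral density is $C(\vect{x_0},\vect{\xi})^2/\lVert \vect{\xi}\rVert^{2h(\vect{x_0})+2}$. Since $C(\vect{x_0},\cdot)$ is homogeneous of degree $0$, Proposition~\ref{pro:spectral-density-ss} identifies the corresponding anisotropy function as
\[
S_{\vect{x_0}}(\vect{\Theta})=C(\vect{x_0},\vect{\Theta})^2=\frac{1}{2\delta}\mathds{1}_{[-\delta,\delta]}(\arg\vect{\Theta}-\alpha(\vect{x_0}))\;,
\]
which is precisely the anisotropy function \eqref{eq:elementary} of the elementary AFBF $X_{\alpha(\vect{x_0}),\delta}$ with orientation parameter $\alpha(\vect{x_0})$ and accuracy $\delta$.

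Finally, applying Definition~\ref{def:localizable_orientation}, a local orientation of $X$ at $\vect{x_0}$ is by definition an orientation of $Y_{\vect{x_0}}$. The explicit diagonalisation of the structure tensor carried out in Example~2 of Section~\ref{s:wav:based:orientation:ssi:ex2} (with $\alpha_0$ replaced by $\alpha(\vect{x_0})$) shows that the largest eigenvalue of $\mat{J}(Y_{\vect{x_0}})$ equals $\tfrac{1}{2}+\tfrac{1}{2}\tfrac{\sin(2\delta)}{2\delta}$, with unit eigenvector $(\cos\alpha(\vect{x_0}),\sin\alpha(\vect{x_0}))^{\T}$, yielding the claimed orientation. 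The only non-trivial step is the verification of Assumptions~$(\mathcal{H})$, in particular the bound \eqref{eq:homogene}; everything else is a direct application of results already established.
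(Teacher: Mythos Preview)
Your proposal is correct and follows exactly the route the paper takes: apply Proposition~\ref{pro:local:orientation:genCFBA} to identify the tangent field, recognise its anisotropy function as that of the elementary field of Example~2 (Section~\ref{s:wav:based:orientation:ssi:ex2}), and read off the orientation from the explicit diagonalisation carried out there. The paper's own justification is in fact the single sentence preceding the proposition (``Thanks to Proposition~\ref{pro:local:orientation:genCFBA}\ldots Using the results of Section~\ref{s:self-similar}, we immediately deduce\ldots''), together with the assertion that ``one can verify that $C$ satisfies assumptions $(\mathcal{H})$''; your write-up supplies the details the paper omits, in particular the symmetric-difference argument for~\eqref{eq:homogene}, and is otherwise identical in structure.
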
 

\subsubsection{Warped  Anisotropic Fractional Brownian Fields (WAFBF)\\}\label{s:perrin}

We now consider a second model, satisfying similar properties, in the same spirit as the approach developed in~\cite{perrin1999reducing,Per00,guyon2000identification} but in the case where the warped Gaussian field is a $H$--self-similar Gaussian field with stationary increments.
\begin{definition}[Warped  Anisotropic Fractional Brownian Fields]
\label{def:wafbf}
Let $X$ be a $H$-sssi field, with anisotropy function $C_X$, as explicit in Proposition \ref{pro:spectral-density-ss}. 
Let $\ope{\Phi} : \mathbb{R}^2 \to \mathbb{R}^2$ be a continuously differentiable function. The Warped  Anisotropic Fractional Brownian Field (WAFBF)  $Z_{\ope{\Phi},X}$ is defined as the deformation of the elementary field $X$:
\begin{equation}\label{e:model-perrin}
Z_{\ope{\Phi},X}(\vect{x})=X(\ope{\Phi}(\vect{x}))\;.
\end{equation}
\end{definition}
The aim of this section is to study the local properties of such Gaussian fields.
\begin{proposition}\label{pro:ex2}
The Gaussian field $Z_{\ope{\Phi},X}$ defined by~(\ref{e:model-perrin}) is localizable at any point $\vect{x_0}\in\mathbb{R}^2$, with tangent field $Y_{\vect{x_0}}$ defined as
 \begin{equation}
 \label{eq:tfZ}
 Y_{\vect{x_0}}(\vect{x})=X(\ope{D}\ope{\Phi}(\vect{x_0})~ \vect{x})\;, \quad \forall  \vect{x}\in\R^2\;,
 \end{equation}
 where $\ope{D}\ope{\Phi}(\vect{x_0})$ is the Jacobian matrix of $\ope{\Phi}$ at point $\vect{x_0}$.
\end{proposition}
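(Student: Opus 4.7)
\textbf{Proof plan for Proposition~\ref{pro:ex2}.}
The strategy is to verify the defining relation~\eqref{e:def:localizable} of localizability directly at the level of covariances. Since $Z_{\ope{\Phi},X}(\vect{x_0}+\rho\vect{h})-Z_{\ope{\Phi},X}(\vect{x_0})=X(\ope{\Phi}(\vect{x_0}+\rho\vect{h}))-X(\ope{\Phi}(\vect{x_0}))$ is, for each fixed $\rho$ and as a process in $\vect{h}$, a centered Gaussian process, and the candidate tangent field $Y_{\vect{x_0}}(\vect{h})=X(\ope{D}\ope{\Phi}(\vect{x_0})\vect{h})$ is also centered Gaussian, fdd convergence reduces to pointwise convergence of covariances. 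Writing $\vect{u}_{\rho,\vect{h}}=\ope{\Phi}(\vect{x_0}+\rho\vect{h})-\ope{\Phi}(\vect{x_0})$ and injecting the harmonizable representation~\eqref{eq:harmonizable} of $X$, with spectral density $f_X(\vect{\xi})=\|\vect{\xi}\|^{-2H-2}S_X(\vect{\xi}/\|\vect{\xi}\|)$, one gets
\[
\mathbb{E}\!\left[\bigl(X(\ope{\Phi}(\vect{x_0}{+}\rho\vect{h_1})){-}X(\ope{\Phi}(\vect{x_0}))\bigr)\overline{\bigl(X(\ope{\Phi}(\vect{x_0}{+}\rho\vect{h_2})){-}X(\ope{\Phi}(\vect{x_0}))\bigr)}\right]
=\!\int_{\R^2}\!(\expj{\ip{\vect{u}_{\rho,\vect{h_1}}}{\vect{\xi}}}{-}1)\overline{(\expj{\ip{\vect{u}_{\rho,\vect{h_2}}}{\vect{\xi}}}{-}1)}f_X(\vect{\xi})\diff\vect{\xi}.
\]

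The key step is the scaling change of variable $\vect{\xi}=\vect{\eta}/\rho$, using the homogeneity $f_X(\vect{\eta}/\rho)=\rho^{2H+2}f_X(\vect{\eta})$ inherited from Proposition~\ref{pro:spectral-density-ss}. After dividing by $\rho^{2H}$, the covariance of the rescaled increments becomes
\[
\int_{\R^2}(\expj{\ip{\vect{u}_{\rho,\vect{h_1}}/\rho}{\vect{\eta}}}-1)\overline{(\expj{\ip{\vect{u}_{\rho,\vect{h_2}}/\rho}{\vect{\eta}}}-1)}\,f_X(\vect{\eta})\diff\vect{\eta}.
\]
By continuous differentiability of $\ope{\Phi}$, $\vect{u}_{\rho,\vect{h}}/\rho\to\ope{D}\ope{\Phi}(\vect{x_0})\vect{h}$ pointwise as $\rho\to 0$, and the limiting integral is precisely the covariance of $(Y_{\vect{x_0}}(\vect{h_1}),Y_{\vect{x_0}}(\vect{h_2}))$ given by formula~\eqref{eq:harmonizable} applied to $X\circ\ope{D}\ope{\Phi}(\vect{x_0})$.

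To pass to the limit inside the integral I would invoke dominated convergence with the standard bound $|\expj{\ip{\vect{v}}{\vect{\eta}}}-1|^2\leq\min(4,\|\vect{v}\|^2\|\vect{\eta}\|^2)$. On a neighbourhood of $\vect{x_0}$, the mean value inequality gives $\|\vect{u}_{\rho,\vect{h}}/\rho\|\leq M_{\vect{x_0}}\|\vect{h}\|$ for some constant $M_{\vect{x_0}}$ controlled by $\sup\|\ope{D}\ope{\Phi}\|$ nearby; hence a single dominating function of the form $C\min(1,\|\vect{\eta}\|^2)f_X(\vect{\eta})$ works, and this is integrable thanks to the integrability condition of Proposition~\ref{prop:borel}.

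The main obstacle is the promotion of fdd convergence to full weak convergence of stochastic processes as required by~\eqref{e:def:localizable}. Here I would argue that Gaussianity plus a uniform second-moment estimate on the increments of the rescaled field (derived from the covariance computation above) yields tightness in the space of continuous functions on every compact set of $\R^2$; combined with the fdd limit already established, this gives the desired convergence in distribution. Finally, the identification of the tangent field as in~\eqref{eq:tfZ} follows by observing that $X\circ\ope{D}\ope{\Phi}(\vect{x_0})$ fits the framework of Proposition~\ref{pro:linear} with $\mat{L}=\ope{D}\ope{\Phi}(\vect{x_0})$, which also guarantees that $Y_{\vect{x_0}}$ is itself a self-similar Gaussian field with stationary increments, so that Definition~\ref{def:localizable_orientation} applies.
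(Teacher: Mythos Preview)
Your proposal is correct and follows essentially the same route as the paper: the covariance computation with the scaling substitution $\vect{\xi}=\vect{\eta}/\rho$, the homogeneity of $f_X$, and dominated convergence via the bound $|\expj{\ip{\vect{v}}{\vect{\eta}}}-1|\le\min(2,\|\vect{v}\|\,\|\vect{\eta}\|)$ together with the mean value inequality for $\ope{\Phi}$ are exactly what the paper does in Step~(i). For the tightness part, the paper makes your sketch explicit by proving the Kolmogorov bound $\mathbb{E}\bigl[(Z_{\vect{x_0},\rho}(\vect{u})-Z_{\vect{x_0},\rho}(\vect{v}))^2\bigr]\le C_0\|\vect{u}-\vect{v}\|^{2H}$ uniformly in $\rho$ (again via the same change of variable and the mean value inequality on $\ope{\Phi}$), and then lifts to higher moments using Gaussianity; this is precisely the ``uniform second-moment estimate on the increments'' you allude to, so your plan matches the paper's argument.
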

\begin{proof}
Proposition~\ref{pro:ex2} is proven in Section~\ref{s:proof:pro:ex2}.
\end{proof}

\begin{proposition}\label{orientation_deformation}
Let $Z_{\ope{\Phi},X}$ be the WAFBF defined in Definition \ref{def:wafbf}, from an elementary field $X=X_{\alpha_0,\delta}$ defined by \eqref{eq:lafbf}-\eqref{eq:elementary}-\eqref{eq:CXcone} and whose orientation, computed in section \ref{s:wav:based:orientation:ssi:ex2}, is given by the unit vector $\vect{\vec n}=(\cos \alpha_0, \sin \alpha_0)$. In addition, we assume that the $C^1$-differentiable deformation ${\ope{\Phi}}$ is a diffeomorphism on an open set $U\subset \R^2$.

Then, at each point $\vect{x_0}\in U$,  the WAFBF $Z_{\ope{\Phi},X}$ admits an approximate (up to $\delta^2$) local orientation given by
\[
\vect{\vec n}_Z(\vect{x_0})=\frac{\ope{D}\ope{\Phi}(\vect{x_0})^{\T}\vect{\vec n}}{\|\ope{D}\ope{\Phi}(\vect{x_0})^{\T}\vect{\vec n}\|}\;.
\]
\end{proposition}
\begin{proof}
According to Definition \ref{def:localizable_orientation}, the local orientation of $Z_{\ope{\Phi},X}$ at $\vect{x_0}\in U$ is given by the one of its tangent field $Y_{\vect{x_0}}$. From Proposition \ref{pro:ex2}, $Y_{\vect{x_0}} (\vect{x})= X(\ope{D}\ope{\Phi}(\vect{x_0})~ \vect{x})$, and since ${\ope{\Phi}}$ is a diffeomorphism in a neighborhood  of $\vect{x_0}$, $\ope{D}\ope{\Phi}(\vect{x_0})$ is invertible. Proposition \ref{prop:orientation_vector} applied to $\mat{L}^{-1}=\ope{D}\ope{\Phi}(\vect{x_0})$ directly leads to the result.
\end{proof}


\begin{example}[Local rotation]
We now illustrate this result considering the case $\alpha_0=0$, then the orientation $\vect{\vec n}=(\cos \alpha_0, \sin \alpha_0)$ of the elementary field $X$ is now the unit vector $\vect{\vec n}=\vect{e_1}=(1,0)^\T$. The deformation we consider is a local rotation governed by a continuously differentiable function $\vect{x}\mapsto \alpha(\vect{x})$. We have the following proposition:
\end{example}
\begin{proposition}
We consider the following warped field from the standard elementary field:
\[
Z_{\ope{\Phi},X}(\vect{x})=X_{0,\delta}(\ope{\Phi}(\vect{x}))\;,
\] 
with
\begin{equation}
\label{eq:deformation}
\ope{\Phi}(\vect{x})=\mathbf{R}_{-\alpha(\vect{x})}\vect{x}=\begin{pmatrix}\cos \alpha(\vect{x})\, x_1+\sin \alpha(\vect{x})\, x_2\\ -\sin \alpha(\vect{x})\, x_1+\cos \alpha(\vect{x})\, x_2\end{pmatrix}\equiv \begin{pmatrix}\ope{\Phi}_1(\vect{x})\\ \ope{\Phi}_2(\vect{x})\end{pmatrix}\;,
\end{equation}
where $\alpha:\mathbb{R}^2\to \mathbb{R}$ is a $C^1$ function on $\mathbb{R}^2$ such that, on an open set $U\subset \R^2$, one has: 
\begin{equation}\label{eq:conditions_alpha}
\forall \vect{x_0}\in U, ~~~~~\nabla \alpha (\vect{x_0})  \wedge \vect{x_0} = \frac{\partial \alpha}{\partial x_1}(\vect{x_0})x_{0,2}-\frac{\partial \alpha}{\partial x_2}(\vect{x_0})x_{0,1} \neq -1\;.
\end{equation}
Then, for each point $\vect{x_0}\in U$ satisfying \eqref{eq:conditions_alpha}, $Z_{\ope{\Phi},X}$ admits as local orientation vector:
\begin{equation}\label{eq:orientation_not_prescribed}
\vect{\vec n}(\vect{x_0})=\frac{\vect{u}(\alpha(\vect{x_0}))+\langle \vect{u}(\alpha(\vect{x_0}))^{\perp},\vect{x_0}\rangle \nabla \alpha(\vect{x_0})}{\|\vect{u}(\alpha(\vect{x_0}))+\langle \vect{u}(\alpha(\vect{x_0}))^{\perp},\vect{x_0}\rangle \nabla \alpha(\vect{x_0})\|}\;.
\end{equation}
with $\vect{u}(\alpha(\vect{x_0}))=(\cos (\alpha_0(\vect{x_0})), \sin (\alpha_0(\vect{x_0}))$.
\end{proposition}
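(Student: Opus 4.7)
The plan is to reduce everything to Proposition~\ref{orientation_deformation} and then identify the gradient of the first component of $\ope{\Phi}$ with the expression claimed in~(\ref{eq:orientation_not_prescribed}). The observation to exploit is that the base elementary field $X$ here has $\alpha_0=0$, so its unit orientation vector is simply $\vect{e_1}=(1,0)^\T$, and hence $\ope{D}\ope{\Phi}(\vect{x_0})^{\T}\vect{\vec n}=\ope{D}\ope{\Phi}(\vect{x_0})^{\T}\vect{e_1}$ is nothing but the gradient $\nabla \ope{\Phi}_1(\vect{x_0})$ of the first component of $\ope{\Phi}$.

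First, I would check that, under the assumption~(\ref{eq:conditions_alpha}), the map $\ope{\Phi}$ really is a local diffeomorphism at $\vect{x_0}$, so that Proposition~\ref{orientation_deformation} applies. A direct computation of the Jacobian, using that $\partial_{x_i}\ope{\Phi}_1 = \cos\alpha\,\delta_{i1}+\sin\alpha\,\delta_{i2}+\partial_{x_i}\alpha\,\ope{\Phi}_2$ and symmetrically $\partial_{x_i}\ope{\Phi}_2 = -\sin\alpha\,\delta_{i1}+\cos\alpha\,\delta_{i2}-\partial_{x_i}\alpha\,\ope{\Phi}_1$, simplifies (after cancelling the cross terms $\pm\partial_{x_1}\alpha\,\partial_{x_2}\alpha\,\ope{\Phi}_1\ope{\Phi}_2$ and using $\sin^2\alpha+\cos^2\alpha=1$) to
\[
\det\ope{D}\ope{\Phi}(\vect{x_0}) = 1 + \partial_{x_1}\alpha(\vect{x_0})\,x_{0,2} - \partial_{x_2}\alpha(\vect{x_0})\,x_{0,1} = 1+\nabla\alpha(\vect{x_0})\wedge\vect{x_0}\;,
\]
which by hypothesis is non-zero; hence $\ope{\Phi}$ is a diffeomorphism in some open neighborhood of $\vect{x_0}$.

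Second, by Proposition~\ref{orientation_deformation} applied to $\mat{L}^{-1}=\ope{D}\ope{\Phi}(\vect{x_0})$, the local orientation of $Z_{\ope{\Phi},X}$ at $\vect{x_0}$ is, up to $O(\delta^{2})$,
\[
\vect{\vec n}_Z(\vect{x_0}) = \frac{\ope{D}\ope{\Phi}(\vect{x_0})^{\T}\vect{e_1}}{\|\ope{D}\ope{\Phi}(\vect{x_0})^{\T}\vect{e_1}\|} = \frac{\nabla \ope{\Phi}_1(\vect{x_0})}{\|\nabla \ope{\Phi}_1(\vect{x_0})\|}\;.
\]
Using the differentiation formulas for $\ope{\Phi}_1$ recalled above, the gradient factorises as
\[
\nabla \ope{\Phi}_1(\vect{x_0}) = \bigl(\cos\alpha(\vect{x_0}),\sin\alpha(\vect{x_0})\bigr) + \ope{\Phi}_2(\vect{x_0})\,\nabla\alpha(\vect{x_0})\;,
\]
i.e.\ $\nabla\ope{\Phi}_1(\vect{x_0}) = u(\alpha(\vect{x_0})) + \ope{\Phi}_2(\vect{x_0})\,\nabla\alpha(\vect{x_0})$.

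The final step is simply to identify $\ope{\Phi}_2(\vect{x_0})$ with the scalar product appearing in~(\ref{eq:orientation_not_prescribed}). Taking $u(\alpha)^{\perp}=(-\sin\alpha,\cos\alpha)$, one has by the definition~(\ref{eq:deformation}),
\[
\langle u(\alpha(\vect{x_0}))^{\perp},\vect{x_0}\rangle = -\sin\alpha(\vect{x_0})\,x_{0,1} + \cos\alpha(\vect{x_0})\,x_{0,2} = \ope{\Phi}_2(\vect{x_0})\;,
\]
which yields exactly the announced formula after normalisation. The computational part is entirely elementary; the only place one must be careful is the Jacobian determinant computation (sign bookkeeping of the $\partial_{x_i}\alpha\,\ope{\Phi}_j$ terms), which is the main source of potential error but presents no conceptual obstacle.
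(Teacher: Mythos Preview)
Your proof is correct and follows essentially the same approach as the paper: compute the Jacobian determinant to verify local invertibility via~(\ref{eq:conditions_alpha}), invoke Proposition~\ref{orientation_deformation} with $\vect{\vec n}=\vect{e_1}$, and then identify $\ope{D}\ope{\Phi}(\vect{x_0})^{\T}\vect{e_1}=\nabla\ope{\Phi}_1(\vect{x_0})$ with the expression in~(\ref{eq:orientation_not_prescribed}). In fact you spell out the last identification $\ope{\Phi}_2(\vect{x_0})=\langle u(\alpha(\vect{x_0}))^{\perp},\vect{x_0}\rangle$ more explicitly than the paper does.
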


\begin{proof}Since the function $\alpha$ is assumed to be $C^1$, the deformation $\ope\Phi$ \eqref{eq:deformation} is also $C^1$. Its Jacobian matrix is given by
\[
\ope{D}\ope{\Phi}(\vect{x})=\begin{pmatrix}\cos \alpha(\vect{x})+\pd{\alpha}{x_1}(\vect{x})\ope{\Phi}_2(\vect{x}) & \sin \alpha(\vect{x})+\frac{\partial \alpha}{\partial x_2}(\vect{x})\ope{\Phi}_2(\vect{x}) \\ -\sin \alpha(\vect{x})-\frac{\partial \alpha}{\partial x_1}(\vect{x})\ope{\Phi}_1(\vect{x}) & \cos \alpha(\vect{x})-\frac{\partial \alpha}{\partial x_2}(\vect{x})\ope{\Phi}_1(\vect{x})\end{pmatrix}\;,
\]
whose determinant is
\[
\mathrm{det}\,\ope{D}\ope{\Phi}(\vect{x})=1+\frac{\partial \alpha}{\partial x_1}(\vect{x})x_2-\frac{\partial \alpha}{\partial x_2}(\vect{x})x_1\;.
\]
Under the assumption (\ref{eq:conditions_alpha}) followed by $\alpha$, the determinant on the open set $U$ is non-zero, so $\ope{\Phi}$ is a $C^1$-diffeomorphism on $U$. Then, Proposition \ref{pro:ex2} and \ref{orientation_deformation} hold, and at each point $\vect{x_0}\in U$, $Z_{\ope{\Phi},X}$ admits as local orientation vector $\vect{\vec n}(\vect{x_0})=\ope{D}\ope{\Phi}(\vect{x_0})^{\T}\vect{e_1}/\|\ope{D}\ope{\Phi}(\vect{x_0})^{\T}\vect{e_1}\|$, which writes
\[
\vect{\vec n}(\vect{x_0})=\frac{\vect{u}(\alpha(\vect{x_0}))+\langle \vect{u}(\alpha(\vect{x_0}))^{\perp},\vect{x_0}\rangle \nabla \alpha(\vect{x_0})}{\|\vect{u}(\alpha(\vect{x_0}))+\langle \vect{u}(\alpha(\vect{x_0}))^{\perp},\vect{x_0}\rangle \nabla \alpha(\vect{x_0})\|}\;.
\]
\end{proof}

\begin{figure}
\centering
\begin{tabular}{cc}
\includegraphics[scale=0.5]{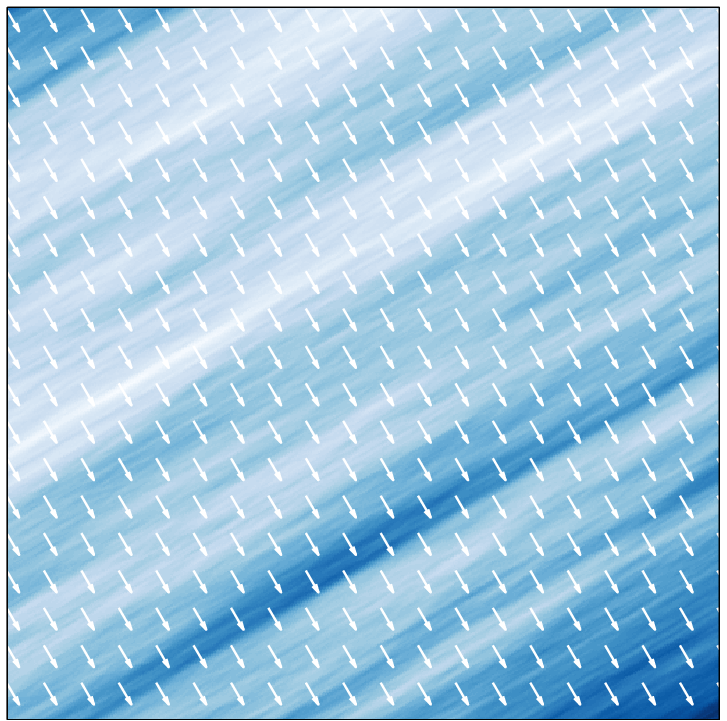} & \includegraphics[scale=0.5]{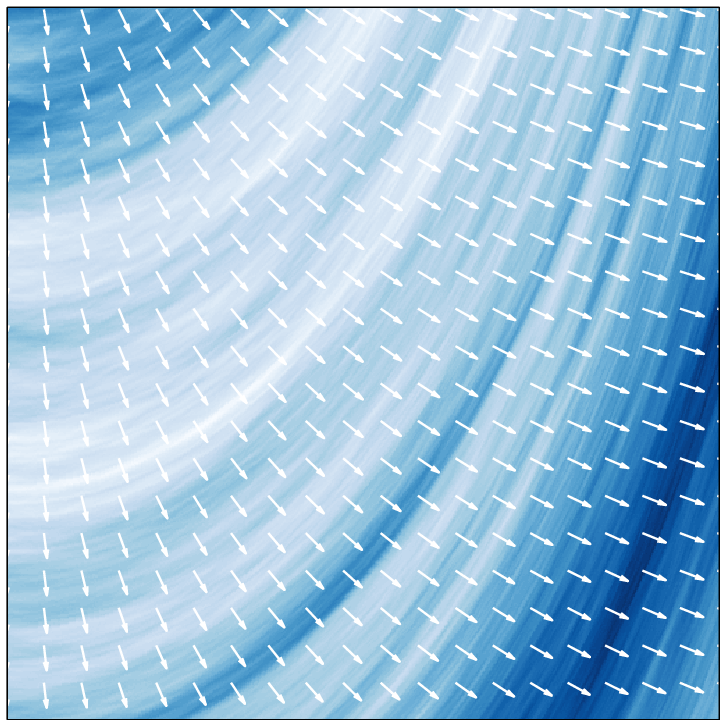}\\
\includegraphics[scale=0.5]{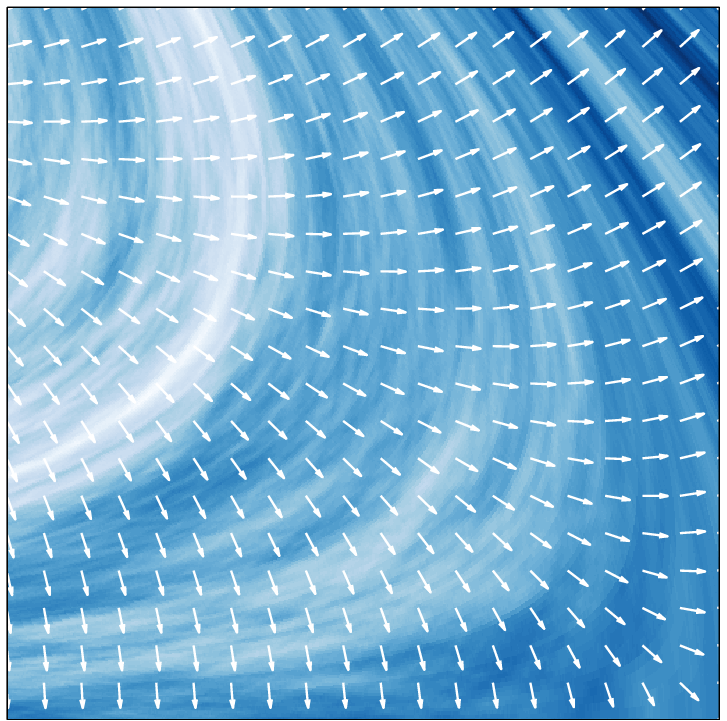} & \includegraphics[scale=0.5]{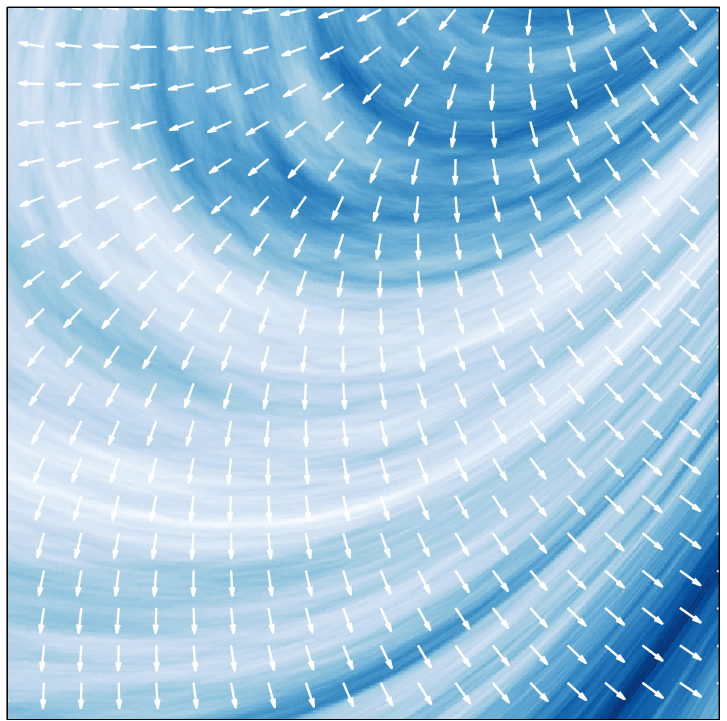}
\end{tabular}
\caption{Image texture of size $512\times 512$ resulting from the simulation of the field $Z_{\ope{\Phi},X}(\vect{x})=X(\mathbf{R}_{-\alpha(\vect{x})}\vect{x})$ on $[0,1]^2$, where $X$ is the standard elementary field with parameters $H=0.5$, $\alpha_0=0$ and $\delta=0.3$, for the following functions $\alpha$: (a) $\alpha(x_1,x_2)=-\frac \pi 3$ (top left), (b) $\alpha(x_1,x_2)=-\frac \pi 2+x_1$ (top right), (c) $\alpha(x_1,x_2)=-\frac \pi 2+x_2$ (bottom left), (d)  $\alpha(x_1,x_2)=-\frac \pi 2+x_1^2-x_2$ (bottom right).}
\label{fig:Z_simu}
\end{figure}
Some examples of realizations of $Z_{\ope{\Phi},X}$ on the domain $[0,1]^2$ are displayed on Figure \ref{fig:Z_simu} for different values of $\alpha$, fulfilling the condition (\ref{eq:conditions_alpha}). Remark that the orientation vector given by (\ref{eq:orientation_not_prescribed}) is equal to $\vect{u}(\alpha(\vect{x_0}))=(\cos \alpha(\vect{x_0}),\sin \alpha(\vect{x_0}))$ plus a term depending on the gradient of $\alpha$. Consequently, we do not exactly have a prescribed orientation governed by $\alpha$. 

\

We now inverse the problem, and investigate the construction of a deformation $\ope{\Phi}$, to obtain a prescribed orientation $\alpha$. To this aim, we will employ a conformal deformation, which has the particularity to preserve the angles. 
%
%
%
An important result,  stated  in the following proposition, is that we can prescribed the orientation $\alpha$ of a Gaussian field, if this orientation  is supposed to be harmonic. 
\begin{proposition}\label{cor:harmonic}
Let $Z_{\ope{\Phi},X}(\vect{x})$ be the Gaussian field \eqref{e:model-perrin}, warped by a conformal deformation $\ope{\Phi}$ defined as follows:
let $\alpha: \mathbb{R}^2 \to \R$ be an harmonic function, and $\lambda$ its harmonic conjugate function such that $\ope{\Psi}=\begin{pmatrix}\lambda\\-\alpha\end{pmatrix}$ is holomorphic (as a complex function, identifying $\R^2$ with $\C$). Define now  $\ope{\Phi}$ as any complex primitive of $\exp(\ope{\Psi})$, as an holomorphic function on $\mathbb{C}$. Then at any point $\vect{x_0}$, an approximate (up to $\delta^2$) local orientation of $Z_{\ope{\Phi},X}$ is 
\[
\vect{\vec n}_Z(\vect{x_0})=\begin{pmatrix}\cos(\alpha(\vect{x_0}))\\ \sin(\alpha(\vect{x_0}))\end{pmatrix}\;,
\]
which is exactly the orientation vector defined by the angle function $\alpha$.
\end{proposition}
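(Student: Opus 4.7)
The strategy is to reduce the problem to Proposition \ref{orientation_deformation}, whose conclusion is already the right shape: if $\ope{\Phi}$ is a $C^1$-diffeomorphism on a neighborhood of $\vect{x_0}$, then the approximate local orientation of $Z_{\ope{\Phi},X}$ is
\[
\vect{\vec n}_Z(\vect{x_0})=\frac{\ope{D}\ope{\Phi}(\vect{x_0})^{\T}\vect{\vec n}}{\|\ope{D}\ope{\Phi}(\vect{x_0})^{\T}\vect{\vec n}\|}\;,
\]
where $\vect{\vec n}=\vect{e_1}$ is the orientation vector of the elementary field $X$ (we are in the setting $\alpha_0=0$ set up in the preceding example). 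So the whole proof reduces to computing $\ope{D}\ope{\Phi}(\vect{x_0})^{\T}\vect{e_1}$ from the holomorphy hypothesis, and to checking that the diffeomorphism requirement is indeed satisfied.

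First I would handle the regularity side. Since $\alpha$ is harmonic and $\lambda$ is (by hypothesis) a harmonic conjugate, $\ope{\Psi}=\lambda-\jj\alpha$ is entire (or at least holomorphic on the relevant simply connected domain), so $\exp(\ope{\Psi})$ is holomorphic and nowhere vanishing. Any complex primitive $\ope{\Phi}$ is therefore holomorphic with $\ope{\Phi}'=\exp(\ope{\Psi})\neq 0$, and the complex inverse function theorem gives that $\ope{\Phi}$ is a local $C^1$-diffeomorphism around every point. Thus around any $\vect{x_0}$ there exists an open set $U\ni\vect{x_0}$ to which Proposition~\ref{orientation_deformation} applies.

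Next I would compute the Jacobian. Writing $\ope{\Phi}=\ope{\Phi}_1+\jj\,\ope{\Phi}_2$, the complex derivative is
\[
\ope{\Phi}'(\vect{x_0})=\partial_1\ope{\Phi}_1(\vect{x_0})+\jj\,\partial_1\ope{\Phi}_2(\vect{x_0})=\eu^{\lambda(\vect{x_0})}\bigl(\cos\alpha(\vect{x_0})-\jj\sin\alpha(\vect{x_0})\bigr)\;.
\]
Identifying real and imaginary parts and then invoking the Cauchy--Riemann equations for $\ope{\Phi}$ to fill in the second column, one gets
\[
\ope{D}\ope{\Phi}(\vect{x_0})=\eu^{\lambda(\vect{x_0})}\begin{pmatrix}\cos\alpha(\vect{x_0})&\sin\alpha(\vect{x_0})\\-\sin\alpha(\vect{x_0})&\cos\alpha(\vect{x_0})\end{pmatrix}\;,
\]
so $\ope{D}\ope{\Phi}(\vect{x_0})$ is the conformal matrix $\eu^{\lambda(\vect{x_0})}\mathbf{R}_{-\alpha(\vect{x_0})}$. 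Transposing turns the rotation into $\mathbf{R}_{\alpha(\vect{x_0})}$, so $\ope{D}\ope{\Phi}(\vect{x_0})^{\T}\vect{e_1}=\eu^{\lambda(\vect{x_0})}(\cos\alpha(\vect{x_0}),\sin\alpha(\vect{x_0}))^{\T}$. Normalizing kills the positive factor $\eu^{\lambda(\vect{x_0})}$ and yields exactly the vector claimed in the proposition.

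There is really no hard step: the main thing to watch is the bookkeeping of signs in the identification $\R^2\simeq\C$ and in the Cauchy--Riemann relations (in particular, why the harmonic conjugate of $\alpha$ appears as $-\alpha$ in the imaginary part of $\ope{\Psi}$, so that $\ope{\Phi}'=\eu^{\lambda}\eu^{-\jj\alpha}$ and the rotation angle in $\ope{D}\ope{\Phi}$ is $-\alpha$, which then transposes to $+\alpha$ and produces the prescribed orientation $(\cos\alpha,\sin\alpha)^{\T}$). The secondary technical point is to argue that the harmonic conjugate $\lambda$ exists (guaranteed locally since $\alpha$ is harmonic), which is enough for the pointwise statement.
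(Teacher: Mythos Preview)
Your proof is correct and follows essentially the same route as the paper's: compute $\ope{\Phi}'(\vect{x_0})=\eu^{\lambda(\vect{x_0})}\eu^{-\jj\alpha(\vect{x_0})}$ from the holomorphy hypothesis, read off the Jacobian $\ope{D}\ope{\Phi}(\vect{x_0})=\eu^{\lambda(\vect{x_0})}\mathbf{R}_{-\alpha(\vect{x_0})}$ via Cauchy--Riemann, and feed this into Proposition~\ref{orientation_deformation}. You are in fact slightly more explicit than the paper in checking the local diffeomorphism hypothesis (via $\ope{\Phi}'\neq 0$) and in spelling out the final normalization step.
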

\begin{proof}
Firstly, the existence of $\lambda$ is the classical result of the existence of an harmonic conjugate of $\alpha$ (see \cite{Stein70}). Then $\ope{\Psi}$ is holomorphic, and  $\exp(\ope{\Psi})$ is holomorphic too (as the composition of holomorphic functions). In addition, since $\ope{\Phi}$ is a complex primitive of $\exp(\ope{\Psi})$ as an holomorphic function on $\mathbb{C}$, we have at any point:
\[
\ope{\Phi}'(\vect{x_0})=\exp(\ope{\Psi}(\vect{x_0}))=e^{\lambda (\vect{x_0})}~e^{-i \alpha(\vect{x_0})}
\]
(as a complex function in $\C$).
Moreover, since $\ope{\Phi}$ is holomorphic, 
\[
\ope{\Phi}'(\vect{x_0}) = \frac{\partial \ope{\Phi}}{\partial x_1}(\vect{x_0})=-i ~\frac{\partial \ope{\Phi}}{\partial x_2}(\vect{x_0})\;,
\]
which leads to the Jacobian matrix:
\[
\ope{D}\ope{\Phi}(\vect{x_0})=\exp(\lambda(\vect{x_0}))\begin{pmatrix}\cos(\alpha(\vect{x_0}))&\sin(\alpha(\vect{x_0}))\\ -\sin(\alpha(\vect{x_0}))&\cos(\alpha(\vect{x_0}))\end{pmatrix}
\]
and concludes the proof.
\end{proof}


\begin{example}[Affine orientation functions]
We consider the family of harmonic functions $$\alpha(x_1,x_2)=ax_1+bx_2+c~,$$
 with $a,b,c$ real constants. By the procedure of Proposition~\ref{cor:harmonic}, we are able to construct the deformation function $\ope{\Phi}$, whose explicit formula is
\begin{equation}\label{eq:harmonic_family}
\ope{\Phi}(x_1,x_2)=\frac{\exp(ax_2-bx_1)}{a^2+b^2}\begin{pmatrix}a\sin(ax_1+bx_2+c)-b\cos(ax_1+bx_2+c)\\ a\cos(ax_1+bx_2+c)+b\sin(ax_1+bx_2+c)\end{pmatrix}\;.
\end{equation}
Then we can verify that 
\[
\ope{D}\ope{\Phi}(\vect{x})^{\T}\vect{e_1}=\exp(ax_2-bx_1)\begin{pmatrix}\cos(ax_1+bx_2+c)\\ \sin(ax_1+bx_2+c)\end{pmatrix},\quad \vect{\vec n}(\vect{x})=\begin{pmatrix}\cos \alpha(\vect{x})\\ \sin \alpha(\vect{x})\end{pmatrix}\;.
\]
An example of simulation of such a prescribed local orientation field is provided in Figure \ref{fig:harmonic_simu}, where the angle variations are governed by the function $\alpha(x_1,x_2)=2x_1-x_2$.
\end{example}

\begin{figure}
\centering
\includegraphics[scale=0.5]{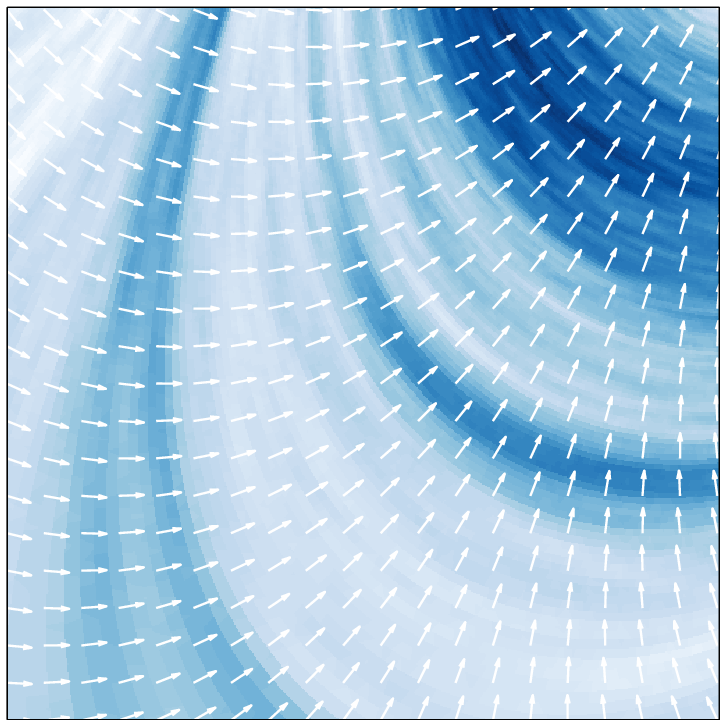} 
\caption{Image texture of size $512\times 512$ resulting from the simulation of the field $Z_{\ope{\Phi},X}(\vect{x})=X(\ope{\Phi}(\vect{x}))$ on $[0,1]^2$, where $X$ is the standard elementary field with parameters $H=0.5$, $\alpha_0=0$ and $\delta=0.3$, and $\ope{\Phi}$ is the deformation function defined by (\ref{eq:harmonic_family}) corresponding to the harmonic function $\alpha(x_1,x_2)=ax_1+bx_2+c$, with $(a,b)=(2,-1)$.}
\label{fig:harmonic_simu}
\end{figure}




\section{Proofs}\label{s:proofs}
This last section is devoted to the proofs of Theorem~\ref{theo:local:orientation:genCFBA} and Proposition~\ref{pro:ex2}.
\subsection{Technical lemmas}
We first state and prove some lemmas that are used in the proof of Theorem~\ref{theo:local:orientation:genCFBA}.
\begin{lemma}\label{lem:1}
Assume that $h$ is a $\beta$--H\"older function with Lipschitz constant $\Lambda_h$ defined on $\mathbb{R}^2$ satisfying
\[
0<a=\inf_{\vect{x}\in \mathbb{R}^2} h(\vect{x})\leq \sup_{\vect{x}\in \mathbb{R}^2} h(\vect{x})=b<\beta\leq 1 \;.
\] 
Then, for all $\vect{x},\vect{y}\in \mathbb{R}^2$ and for all $\vect{\xi}\in \mathbb{R}^2$,
\[
\left|\lVert \vect{\xi}\rVert^{-h(\vect{y})}-\lVert \vect{\xi}\rVert^{-h(\vect{x})}\right|\leq \Lambda_h\|\vect{y}-\vect{x}\|^\beta\,\abs{\log \lVert \vect{\xi} \rVert}~ \left(\lVert \vect{\xi} \rVert^{-a-1}\mathds{1}_{\lVert \vect{\xi}\rVert>1}+\lVert \vect{\xi} \rVert^{-b-1}\mathds{1}_{\lVert \vect{\xi}\rVert\leq 1}\right)\;.
\] 
\end{lemma}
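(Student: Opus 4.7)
The strategy is the standard ``write a power as an exponential'' trick combined with the mean value inequality for $t \mapsto e^t$. The plan is to write
\[
\lVert\vect{\xi}\rVert^{-h(\vect{y})} - \lVert\vect{\xi}\rVert^{-h(\vect{x})} = \exp\bigl(-h(\vect{y}) \log \lVert\vect{\xi}\rVert\bigr) - \exp\bigl(-h(\vect{x}) \log \lVert\vect{\xi}\rVert\bigr),
\]
and then invoke the elementary inequality $\lvert e^u - e^v \rvert \leq \lvert u - v \rvert \max(e^u, e^v)$, which is immediate from the mean value theorem applied to the exponential.

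The first factor $\lvert u - v \rvert$ equals $\lvert h(\vect{y}) - h(\vect{x}) \rvert \cdot \lvert \log \lVert\vect{\xi}\rVert \rvert$, and by the $\beta$-Hölder hypothesis on $h$ this is controlled by $\Lambda_h \lVert \vect{y} - \vect{x}\rVert^\beta \lvert \log \lVert\vect{\xi}\rVert\rvert$, which is exactly the first two factors on the right-hand side of the claimed estimate.

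For the prefactor $\max\bigl(\lVert\vect{\xi}\rVert^{-h(\vect{y})}, \lVert\vect{\xi}\rVert^{-h(\vect{x})}\bigr)$ I would split according to whether $\lVert\vect{\xi}\rVert$ is larger or smaller than $1$. On $\{\lVert\vect{\xi}\rVert > 1\}$ the map $t \mapsto \lVert\vect{\xi}\rVert^{-t}$ is decreasing, so its maximum over $t \in [a,b]$ is attained at $t = a$ and is bounded by $\lVert\vect{\xi}\rVert^{-a}$; on $\{\lVert\vect{\xi}\rVert \leq 1\}$ the map is increasing, so the maximum is attained at $t = b$ and is bounded by $\lVert\vect{\xi}\rVert^{-b}$. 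Assembling the three pieces yields the stated inequality (with the caveat that the exponents $-a-1$ and $-b-1$ written in the statement must be reconciled against this bookkeeping: the natural estimate gives $-a$ and $-b$, and any extra $-1$ factor would need to come from an additional uniform bound available downstream, for example from a change of variables or from a constant $\lVert\vect{\xi}\rVert^{-1}$ factor that one chooses to absorb into the prefactor on each regime).

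The whole argument is one-shot; there is no real obstacle beyond being careful with the case split on $\lVert\vect{\xi}\rVert \gtrless 1$ and with the precise exponents one writes down. The only substantive input is the mean value inequality for $e^t$, together with the Hölder control on $h$.
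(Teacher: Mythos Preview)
Your approach is exactly the paper's: write $\lVert\vect{\xi}\rVert^{-h}=\exp(-h\log\lVert\vect{\xi}\rVert)$, apply the mean value inequality to the exponential, and split on $\lVert\vect{\xi}\rVert\gtrless 1$ to control the prefactor by the extremal values $a,b$ of $h$.

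Your caveat about the exponents is well-founded and in fact identifies a typographical inconsistency in the paper. The paper's own proof applies the mean value inequality not to $h\mapsto\lVert\vect{\xi}\rVert^{-h}$ but to $h\mapsto\lVert\vect{\xi}\rVert^{-h-1}$, which is what produces the exponents $-a-1$ and $-b-1$ on the right-hand side; correspondingly, the left-hand side of the lemma should read $\bigl|\lVert\vect{\xi}\rVert^{-h(\vect{y})-1}-\lVert\vect{\xi}\rVert^{-h(\vect{x})-1}\bigr|$. This is confirmed by the way the lemma is invoked in the proof of Lemma~2, where the quantity being estimated is the difference of $f^{1/2}(\cdot,\vect{\xi})=C(\cdot,\vect{\xi})\lVert\vect{\xi}\rVert^{-h(\cdot)-1}$. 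With that correction, your argument and the paper's coincide verbatim; there is no ``extra $-1$ to absorb downstream''---it simply belongs on the left-hand side from the outset.
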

\begin{proof}
Let us fix $\vect{\xi}\in \mathbb{R}^2$ and apply the mean value inequality to the function  
\[
h\mapsto \lVert \vect{\xi} \rVert^{-h-1}=\exp(-(h+1) \log  \| \vect{\xi} \|)\;.
\]
We obtain that
\[
\left|\lVert \vect{\xi} \rVert^{-h_2-1}-\lVert \vect{\xi} \rVert^{-h_1-1}\right|\leq |h_1-h_2|\,\abs{\log \lVert \vect{\xi} \rVert}~ \left(\lVert \vect{\xi} \rVert^{-\alpha-1}\right)\;,
\]
with
$\alpha=\min (h_1,h_2)$ if $\lVert \vect{\xi}\rVert>1$ and $\alpha=\max (h_1,h_2)$ if $\lVert \vect{\xi} \rVert<1$.
This leads to the inequality:
\begin{align*}
&\forall (\vect{x},\vect{y})\in (\mathbb{R}^2)^2,\,\forall \vect{\xi}\in \mathbb{R}^2,\quad \left|\lVert \vect{\xi}\rVert^{-h(\vect{y})}-\lVert \vect{\xi}\rVert^{-h(\vect{x})}\right|\leq  \lvert h(\vect{y})-h(\vect{x})\rvert\, \abs{\log \lVert \vect{\xi} \rVert}\\
&\qquad \qquad \qquad \qquad \qquad \qquad\times \left(\lVert \vect{\xi} \rVert^{-\inf_{\vect{z}} h(\vect{z})-1}\mathds{1}_{\lVert \vect{\xi}\rVert>1}+\lVert \vect{\xi} \rVert^{-\sup_{\vect{z}} h(\vect{z})-1}\mathds{1}_{\lVert \vect{\xi}\rVert\leq 1}\right)\;.
\end{align*}
The holderianity of $h$ allows to conclude.
\end{proof}
\begin{lemma}\label{lem:2}
Assume that $h : \mathbb{R}^2\to [0,1]$ and $C : \mathbb{R}^2\times \mathbb{R}^2\to \mathbb{R}_+$ are two functions satisfying assumptions ($\mathcal{H}$). Let $\vect{x}\in \mathbb{R}^2$. Then, there exists some constant $K_{\vect{x}}>0$ depending only on $\vect{x}$ such that for any $(\rho,\vect{w})\in \mathbb{R}_+\times \mathbb{R}^2$ with $|\rho|\leq 1$ and $\norme{\vect{w}}\leq 1$ we have
\[
\int_{\mathbb{R}^2}\left|\expj{\ip{\vect{x}}{\vect{\xi}}}-1\right|^2\left[f^{1/2}(\vect{x}+\rho \vect{w},\vect{\xi})-f^{1/2}(\vect{x},\vect{\xi})\right]^2\dif \vect{\xi} \leq K_{\vect{x}}|\rho|^{2\beta}\max(\|\vect{w}\|^{2\beta},\|\vect{w}\|^{2\eta})\left(1+\|\vect{x}\|^2\right)\;.
\]
Moreover the function $\vect{x}\mapsto K_{\vect{x}}$ is bounded on any compact set.
\end{lemma}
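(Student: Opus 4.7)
The plan is to decompose the difference $f^{1/2}(\vect{x}+\rho\vect{w},\vect{\xi})-f^{1/2}(\vect{x},\vect{\xi})$ into a part reflecting the variation of $C$ and a part reflecting the variation of $h$:
\[
f^{1/2}(\vect{x}+\rho\vect{w},\vect{\xi})-f^{1/2}(\vect{x},\vect{\xi})
= \frac{C(\vect{x}+\rho\vect{w},\vect{\xi})-C(\vect{x},\vect{\xi})}{\|\vect{\xi}\|^{h(\vect{x}+\rho\vect{w})+1}}
+ C(\vect{x},\vect{\xi})\!\left(\frac{1}{\|\vect{\xi}\|^{h(\vect{x}+\rho\vect{w})+1}}-\frac{1}{\|\vect{\xi}\|^{h(\vect{x})+1}}\right)\!.
\]
Using $(a+b)^2\leq 2a^2+2b^2$, the integral then splits into two pieces $I_C$ and $I_h$, which I would estimate separately, bounding the oscillatory factor throughout by $|\expj{\ip{\vect{x}}{\vect{\xi}}}-1|^2\leq \min(4,\|\vect{x}\|^2\|\vect{\xi}\|^2)$ and switching to polar coordinates $\vect{\xi}=r\vect{\Theta}$.

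For $I_C$, the homogeneity of degree $0$ of $\vect{\xi}\mapsto C(\vect{x},\vect{\xi})$ lets me factor the angular part from the radial part, giving
\[
I_C \;\leq\; \Bigl[\!\int_{\mathbb{S}^1}(C(\vect{x}+\rho\vect{w},\vect{\Theta})-C(\vect{x},\vect{\Theta}))^2 d\vect{\Theta}\Bigr]\!\cdot\!\int_0^{\infty}\frac{\min(4,\|\vect{x}\|^2 r^2)}{r^{2h(\vect{x}+\rho\vect{w})+1}}dr.
\]
The angular integral is bounded by $A_{\vect{x}}\,\rho^{2\eta}\|\vect{w}\|^{2\eta}$ using \eqref{eq:homogene}, and the radial integral, split at $r_0=2/\|\vect{x}\|$, is controlled by $C_h\|\vect{x}\|^{2h(\vect{x}+\rho\vect{w})}\leq C_h'(1+\|\vect{x}\|^2)$ since $h$ takes values in $[a,b]\subset(0,1)$. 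Using $|\rho|\leq 1$ turns $|\rho|^{2\eta}$ into $|\rho|^{2\beta}$ (recall $\eta\geq\beta$), which is the exponent we want on $\rho$.

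For $I_h$, I apply Lemma~\ref{lem:1} (with exponents shifted by $1$, exactly as in its proof) and the boundedness $C\leq M$ to get
\[
I_h \;\lesssim\; M^2\Lambda_h^2\,|\rho|^{2\beta}\|\vect{w}\|^{2\beta}\!\int_0^{\infty}\!\min(4,\|\vect{x}\|^2r^2)(\log r)^2\bigl(r^{-2a-1}\mathds{1}_{r>1}+r^{-2b-1}\mathds{1}_{r\leq 1}\bigr)\,dr.
\]
I would bound this radial integral by splitting at $r=1$: on $(0,1]$ using $\min(4,\|\vect{x}\|^2r^2)\leq\|\vect{x}\|^2 r^2$ yields a $\|\vect{x}\|^2$ factor times a convergent integral (since $b<1$ makes $\int_0^1 r^{1-2b}(\log r)^2 dr<\infty$); on $[1,\infty)$ using $\min(4,\cdot)\leq 4$ yields a constant (since $a>0$ makes $\int_1^\infty r^{-2a-1}(\log r)^2 dr<\infty$). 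This produces $(1+\|\vect{x}\|^2)$ up to a constant, and combined with the previous factors gives the $\|\vect{w}\|^{2\beta}$ contribution.

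Adding $I_C$ and $I_h$, and writing $A_{\vect{x}}\|\vect{w}\|^{2\eta}+\|\vect{w}\|^{2\beta}\leq (A_{\vect{x}}+1)\max(\|\vect{w}\|^{2\beta},\|\vect{w}\|^{2\eta})$, yields the claimed bound with $K_{\vect{x}}$ a linear combination of $A_{\vect{x}}$ and constants coming from $M$, $\Lambda_h$, $a$, $b$, $\beta$; local boundedness of $\vect{x}\mapsto K_{\vect{x}}$ follows from the analogous hypothesis on $A_{\vect{x}}$ in Assumptions~$(\mathcal{H})$. The main obstacle is bookkeeping: one has to check that the radial splits behave well uniformly in $\|\vect{x}\|$ (and over all $h(\vect{x}+\rho\vect{w})\in[a,b]$) so that the $\|\vect{x}\|$-dependence aggregates precisely into the factor $1+\|\vect{x}\|^2$, with no residual divergence at $r\to 0$ or $r\to\infty$.
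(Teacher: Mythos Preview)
Your proposal is correct and follows essentially the same strategy as the paper: split $f^{1/2}(\vect{x}+\rho\vect{w},\vect{\xi})-f^{1/2}(\vect{x},\vect{\xi})$ into a $C$-variation part and an $h$-variation part, apply $(a+b)^2\le 2a^2+2b^2$, control the $C$-part via the homogeneity of $C$ and the hypothesis~\eqref{eq:homogene}, and control the $h$-part via Lemma~\ref{lem:1} together with the bound $|e^{\jj\langle\vect{x},\vect{\xi}\rangle}-1|\le\min(2,\|\vect{x}\|\,\|\vect{\xi}\|)$.

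The only differences are cosmetic. First, your decomposition freezes $C(\vect{x},\cdot)$ in the $h$-variation term, whereas the paper freezes $C(\vect{x}+\rho\vect{w},\cdot)$; the two choices are symmetric and equally effective since $C$ is bounded. Second, for the $C$-variation radial integral you bound crudely via $\min(4,\|\vect{x}\|^2r^2)$ and split at $r_0=2/\|\vect{x}\|$, obtaining a constant times $\|\vect{x}\|^{2h}$ uniformly over $h\in[a,b]$; the paper instead performs the exact substitution $s=r\langle\vect{x},\vect{\Theta}\rangle$ to extract $\|\vect{x}\|^{2h(\vect{x})}$ directly. Both routes give the same $(1+\|\vect{x}\|^2)$ factor, and your remark that $|\rho|^{2\eta}\le|\rho|^{2\beta}$ (for $|\rho|\le1$, $\eta\ge\beta$) is exactly what is needed to unify the exponents.
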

\begin{proof}
Observe that
\begin{equation*}
\frac{C(\vect{x}+\rho \vect{w},\vect{\xi})}{\lVert \vect{\xi} \rVert^{h(\vect{x}+\rho \vect{w})+1}}-\frac{C(\vect{x},\vect{\xi})}{\lVert \vect{\xi} \rVert^{h(\vect{x})+1}}=\frac{C(\vect{x}+\rho \vect{w},\vect{\xi})}{\lVert \vect{\xi} \rVert^{h(\vect{x}+\rho \vect{w})+1}}-\frac{C(\vect{x}+\rho \vect{w},\vect{\xi})}{\lVert \vect{\xi} \rVert^{h(\vect{x})+1}}+\frac{C(\vect{x}+\rho \vect{w},\vect{\xi})}{\lVert \vect{\xi} \rVert^{h(\vect{x})+1}}-\frac{C(\vect{x},\vect{\xi})}{\lVert \vect{\xi} \rVert^{h(\vect{x})+1}}\;.
\end{equation*}
Using the classical inequality $|a-b|^2\leq 2(|a|^2+|b|^2$), we deduce that
\begin{eqnarray*}
&&\int_{\mathbb{R}^2}\left|\expj{\ip{\vect{x}}{\vect{\xi}}}-1\right|^2\left[f^{1/2}(\vect{x}+\rho \vect{w},\vect{\xi})-f^{1/2}(\vect{x},\vect{\xi})\right]^2\dif \vect{\xi}\;,\\
&\leq& 2\int_{\mathbb{R}^2}\left|\expj{\ip{\vect{x}}{\vect{\xi}}}-1\right|^2\left[\frac{C(\vect{x}+\rho \vect{w},\vect{\xi})}{\lVert \vect{\xi} \rVert^{h(\vect{x}+\rho \vect{w})+1}}-\frac{C(\vect{x}+\rho \vect{w},\vect{\xi})}{\lVert \vect{\xi} \rVert^{h(\vect{x})+1}}\right]^2\dif\vect{\xi}\\
&&+2\int_{\mathbb{R}^2}\frac{\left|\expj{\ip{\vect{x}}{\vect{\xi}}}-1\right|^2}{\lVert \vect{\xi} \rVert^{2h(\vect{x})+2}}\left[C(\vect{x}+\rho \vect{w},\vect{\xi})-C(\vect{x},\vect{\xi})\right]^2\dif\vect{\xi}\;.
\end{eqnarray*}

To bound the latter integral $\int_{\mathbb{R}^2}\frac{\left|\expj{\ip{\vect{x}}{\vect{\xi}}}-1\right|^2}{\lVert \vect{\xi} \rVert^{2h(\vect{x})+2}}\left[C(\vect{x}+\rho \vect{w},\vect{\xi})-C(\vect{x},\vect{\xi})\right]^2\dif\vect{\xi}$, we set $\vect{\xi}=r\vect{\Theta}$ with $(r,\vect{\Theta})\in \mathbb{R}_+^*\times \mathbb{S}^1$ and use the homogeneity of $C$. It yieds:
\begin{eqnarray*}
&&\int_{\mathbb{R}^2}\frac{\left|\expj{\ip{\vect{x}}{\vect{\xi}}}-1\right|^2}{\lVert \vect{\xi} \rVert^{2h(\vect{x})+2}}\left[C(\vect{x}+\rho \vect{w},\vect{\xi})-C(\vect{x},\vect{\xi})\right]^2\dif\vect{\xi}\;,\\
&=&\int_{\mathbb{R}_+^*}\frac{|\expj{s}-1|^2}{{s^{2h(\vect{x})+1}}}\left[\int_{\mathbb{S}^1}|\ip{\vect{x}}{\vect{\Theta}}|^{2h(\vect{x})}\left[C(\vect{x}+\rho \vect{w},\vect{\Theta})-C(\vect{x},\vect{\Theta})\right]^2 \dif\vect{\Theta}\right]\dif s\;,\\
&\leq& \|\vect{x}\|^{2h(\vect{x})}\left[\int_{\mathbb{R}_+^*}\frac{|\expj{s}-1|^2}{{s^{2h(\vect{x})+1}}}\dif s\right]\left[\int_{\mathbb{S}^1}\left[C(\vect{x}+\rho \vect{w},\vect{\Theta})-C(\vect{x},\vect{\Theta})\right]^2 \dif\vect{\Theta}\right]\;,
\end{eqnarray*}
where we set $s=r \ip{\vect{x}}{\vect{\Theta}}$ in the second equality.
We now use condition \eqref{eq:homogene} of assumptions~($\mathcal{H}$) with $\vect{z}=\rho \vect{w}\in B(\vect{0},1)$. Hence,
\[
\int_{\mathbb{R}^2}\frac{\left|\expj{\ip{\vect{x}}{\vect{\xi}}}-1\right|^2}{\lVert \vect{\xi} \rVert^{2h(\vect{x})+2}}\left[C(\vect{x}+\rho \vect{w},\vect{\xi})-C(\vect{x},\vect{\xi})\right]^2\dif\vect{\xi}\leq A_{\vect{x}}~ |\rho|^{2\eta}\|\vect{w}\|^{2\eta}\|\vect{x}\|^{2h(\vect{x})}\left[\int_{\mathbb{R}_+^*}\frac{|\expj{s}-1|^2}{{s^{2h(\vect{x})+1}}}\dif s\right].
\]
Then, since $\|\vect{x}\|^{2h(\vect{x})}\leq \|\vect{x}\|^{2}+1$ is always valid ($h(\mathbb{R}^2)\subset [0,1]$), one has
\begin{equation}
\label{eq:CV-J}
\int_{\mathbb{R}^2}\frac{\left|\expj{\ip{\vect{x}}{\vect{\xi}}}-1\right|^2}{\lVert \vect{\xi} \rVert^{2h(\vect{x})+2}}\left[C(\vect{x}+\rho \vect{w},\vect{\xi})-C(\vect{x},\vect{\xi})\right]^2\dif\vect{\xi}\leq  B_{\vect{x}}\|\vect{w}\|^{2\eta} |\rho|^{2\eta}\left(\|\vect{x}\|^2+1\right)\;,
\end{equation}
with
\[
B_{\vect{x}}=A_{\vect{x}}\int_{\mathbb{R}_+^*}\frac{|\expj{s}-1|^2}{{s^{2h(\vect{x})+1}}}\dif s<\infty\;.
\]
We now bound $\int_{\mathbb{R}^2}\left|\expj{\ip{\vect{x}}{\vect{\xi}}}-1\right|^2\left[\frac{C(\vect{x}+\rho \vect{w},\vect{\xi})}{\lVert \vect{\xi} \rVert^{h(\vect{x}+\rho \vect{w})+1}}-\frac{C(\vect{x}+\rho \vect{w},\vect{\xi})}{\lVert \vect{\xi} \rVert^{h(\vect{x})+1}}\right]^2\dif\vect{\xi}$. Since $C$ is bounded and by Lemma~\ref{lem:1} we have for some $A>0$ depending only on $h$ and $C$:
\begin{eqnarray*}
&&\int_{\mathbb{R}^2}\left|\expj{\ip{\vect{x}}{\vect{\xi}}}-1\right|^2\left[\frac{C(\vect{x}+\rho \vect{w},\vect{\xi})}{\lVert \vect{\xi} \rVert^{h(\vect{x}+\rho \vect{w})+1}}-\frac{C(\vect{x}+\rho \vect{w},\vect{\xi})}{\lVert \vect{\xi} \rVert^{h(\vect{x})+1}}\right]^2\dif\vect{\xi}\\
&\leq & \Lambda_{h,C} |\rho|^{2\beta}\|\vect{w}\|^{2\beta} \int_{\mathbb{R}^2}\left|\expj{\ip{\vect{x}}{\vect{\xi}}}-1\right|^2 \left[\left|\log\|\vect{\xi}\|\right|^2\left(\|\vect{\xi}\|^{-2a-2}\mathds{1}_{\|\vect{\xi}\|>1}+\|\vect{\xi}\|^{-2b-2}\mathds{1}_{\|\vect{\xi}\|\leq 1}\right)\right]\dif\vect{\xi}\;.
\end{eqnarray*}

Since $\left|\expj{\ip{\vect{x}}{\vect{\xi}}}-1\right|\leq \min (\|\vect{x}\|\cdot\|\vect{\xi}\|,2)$, we directly get that
\begin{equation}
\label{eq:CV-I}
\int_{\mathbb{R}^2}\left|\expj{\ip{\vect{x}}{\vect{\xi}}}-1\right|^2\left[\frac{C(\vect{x}+\rho \vect{w},\vect{\xi})}{\lVert \vect{\xi} \rVert^{h(\vect{x}+\rho \vect{w})+1}}-\frac{C(\vect{x}+\rho \vect{w},\vect{\xi})}{\lVert \vect{\xi} \rVert^{h(\vect{x})+1}}\right]^2\dif\vect{\xi}\leq \widetilde{\Lambda}~ |\rho|^{2\beta}\|\vect{w}\|^{2\beta}\left(\|\vect{x}\|^2+1\right)\;.
\end{equation}
with
\[
\widetilde{\Lambda}=\Lambda_{h,C}\left[\int_{\mathbb{R}^2} \left|\log\|\vect{\xi}\|\right|^2\left(\|\vect{\xi}\|^{-2a-2}\mathds{1}_{\|\vect{\xi}\|>1}+\|\vect{\xi}\|^{-2b}\mathds{1}_{\|\vect{\xi}\|\leq 1}\right)\dif\vect{\xi}\right]\;.
\]
The conclusion then follows from~\eqref{eq:CV-J} and \eqref{eq:CV-I} with $K_{\vect{x}}=2B_{\vect{x}}+2\widetilde{A}$. The fact that $\vect{x}\mapsto K_{\vect{x}}$ is bounded on any compact set comes from the fact that $\vect{x}\mapsto A_{\vect{x}}$ is bounded on any compact set.
\end{proof}
\subsection{Proof of Theorem~\ref{theo:local:orientation:genCFBA}\\}\label{s:proof:local:orientation}
\label{proof:prop9}


Let $X$ be the Gaussian field defined by formula (\ref{eq:gafbf}), and $\vect{x_0}\in \mathbb{R}^2$. Let $Z_{\vect{x_0}}$ be the Gaussian field 
\[
Z_{\vect{x_0},\rho}(\vect{u})=\frac{X(\vect{x_0}+\rho \vect{u})-X(\vect{x_0})}{\rho^{h(\vect{x_0)}}}\;,
\]
and $Y_{\vect{x_0}}$ the $H$-sssi field defined by formula (\ref{eq:Yx0}). We are going to prove that $Y_{\vect{x_0}}$ is the tangent field of $X$ at $\vect{x_0}\in \mathbb{R}^2$, that is
\[
\left\{Z_{\vect{x_0},\rho}(\vect{h})\right\}_{\vect{h}\in \mathbb{R}^2}\overset{d}{\longrightarrow} \left\{Y_{\vect{x_0}}(\vect{h})\right\}_{\vect{h}\in \mathbb{R}^2}\;.
\]
as $\rho\to 0$, in the sense of weak convergence of stochastic processes. The proof is divided in two steps~:
\begin{itemize}
\item[(i)] We first prove that the finite dimensional distribution of $Z_{\vect{x_0},\rho}$ converge to those of $Y_{\vect{x_0}}$ as $\rho\to 0$:
\[
(Z_{\vect{x_0},\rho}(\vect{h}_1),\dotsc, Z_{\vect{x_0},\rho}(\vect{h}_N))\longrightarrow (Y_{\vect{x_0}}(\vect{h}_1),\dotsc,Y_{\vect{x_0}}(\vect{h}_N))\;,
\]
which means the convergence of the measures of these finite dimensional random vectors on $ \mathbb{R}^N$. The L\'evy theorem insures that it is equivalent to prove the converge in term of the characteristic functions of these random vectors, which is, in the Gaussian case, equivalent to show that we have convergence with respect to the covariance:
\begin{equation}\label{eq:convergence_cov}
\forall (\vect{u},\vect{v})\in (\mathbb{R}^2)^2,\quad \lim_{\rho\to 0}\mathbb{E}[Z_{\vect{x_0},\rho}(\vect{u})\overline{Z_{\vect{x_0},\rho}(\vect{v})}]=\mathbb{E}(Y_{\vect{x_0}}(\vect{u})\overline{Y_{\vect{x_0}}(\vect{v})})\;.
\end{equation}
\item[(ii)] Thereafter, we set $\rho_n=1/n\in [0,1]$ and prove that the sequence of random fields $(Z_n)_{n\in \mathbb{N}^{\ast}}\overset{\mathrm{def}}{=}(Z_{\vect{x_0},\rho_n})_{n\in \mathbb{N}^{\ast}}$, satisfies a tightness property, which is fulfilled if $(Z_n)_{n\in \mathbb{N}^{\ast}}$ satisfies the following Kolmogorov criteria (see for example \cite{KarShr88} p.64):
\begin{equation}\label{eq:kolmo_criteria}
\forall T>0, \, \forall \vect{u}, \vect{v}\in [-T,T]^{2},\quad \sup\limits_{n>1}\mathbb{E}(|Z_n(\vect{u})-Z_n(\vect{v})|^{\gamma_1})\leq C_{0}(T)\lVert \vect{u}-\vect{v}\rVert^{2+\gamma_2}\;,
\end{equation}
for some positive constant $C_{0}(T)$ which may depend on $T$ and $\gamma_1,\gamma_2$ which are universal positive constants.
\end{itemize}

\begin{remark}\label{rem:gaussian_moment}
Since $Z_{\vect{x_0},\rho}(\vect{u})-Z_{\vect{x_0},\rho}(\vect{v})$ is a Gaussian vector, then for all $\gamma_1>0$
\[
\sup_{\rho\in (0,1)}\|\vect{u}-\vect{v}\|^{-\gamma_0\gamma_1}\mathbb{E}\left[\left|Z_{\vect{x_0},\rho}(\vect{u})-Z_{\vect{x_0},\rho}(\vect{v})\right|^{\gamma_1}\right]
\]
and
\[
\left[\sup_{\rho\in (0,1)}\|\vect{u}-\vect{v}\|^{-2\gamma_0}\mathbb{E}\left|Z_{\vect{x_0},\rho}(\vect{u})-Z_{\vect{x_0},\rho}(\vect{v})\right|^{2}\right]^{\gamma_1/2}
\]
are equal up to a multiplicative constant depending only on $\gamma_1$. The inequality~\eqref{eq:kolmo_criteria} is satisfied by considering $\gamma_1>2/\gamma_0$ with $\gamma_2=\gamma_0\gamma_1-2$. Therefore, it will be sufficient to verify that $0\leq \gamma_0\leq 1$ such that
\begin{equation}\label{eq:kolmo_criteria0}
\forall T>0, \, \forall \vect{u}, \vect{v}\in [-T,T]^{2},\quad \sup\limits_{n>1}\mathbb{E}(|Z_n(\vect{u})-Z_n(\vect{v})|^{2})\leq C_{0}(T)\lVert \vect{u}-\vect{v}\rVert^{2\gamma_0}\;.
\end{equation}
\end{remark}

\begin{remark}\label{rem:kolmo}
Since the notion of tangent field is a local notion at point $\vect{x}_0$, it is equivalent to determine the tangent field of $X$ at $\vect{x}_0$ or that of $\widetilde X(\vect{x})=\varphi_{\vect{x}_0}(\vect{x}) X(\vect{x})$ at point $\vect{x}_0$, where $\varphi$ is a $C^{\infty}$ function which is equal to 1 on a neighborhood of $\vect{x}_0$ (e.g. the ball $B(\vect{x}_0,a)$ of radius $a$) then decreases and vanished outside a compact set (e.g. the ball $B(\vect{x}_0,b)$). Such bump functions can easily be constructed \cite{tu2008bump}, from a 1-D function $\varphi_0$ as illustrated in Figure \ref{fig:bump} by taking $\varphi_{\vect{x}_0}(\vect{x})=\varphi_0( \norme{\vect{x}-\vect{x}_0})$. Then, we define
\[
\widetilde Z_{\vect{x_0},\rho}(\vect{u})=\frac{\widetilde X(\vect{x_0}+\rho \vect{u})-\widetilde X(\vect{x_0})}{\rho^{h(\vect{x_0)}}}\;.
\]
In step (i), computing the limit for given $\vect{u}$ and $\vect{v}$, nothing changes since from a certain rank, $\rho_n$ is such that $\vect{x_0}+\rho_n \vect{u}$ and $\vect{x_0}+\rho_n \vect{v}$ fall in the ball $B(\vect{x}_0,a)$ in which $\widetilde X=X$.\\
In step (ii), with $\widetilde Z_n(\vect{u})-\widetilde Z_n(\vect{v})=\rho_n^{-h(\vect{x}_0)}(\widetilde X(\vect{x_0}+\rho_n \vect{u})-\widetilde X(\vect{x_0}+\rho_n \vect{v}))$, the following inequality
\begin{equation}\label{eq:kolmo_criteria_bis}
\sup\limits_{n>1}\mathbb{E}(|\widetilde Z_n(\vect{u})-\widetilde Z_n(\vect{v})|^2)\leq C_{0}(T)\lVert \vect{u}-\vect{v}\rVert^{2\gamma_0}\;,
\end{equation}
requires to distinguish three cases:\\
\begin{itemize}
\item If $\vect{u}$ and $\vect{v}$ are in $B(\vect{x}_0,b)^c$, then $\widetilde X(\vect{x_0}+\rho_n \vect{u})=\widetilde X(\vect{x_0}+\rho_n \vect{v})=0$ and so \eqref{eq:kolmo_criteria_bis} is satisfied.
\item If $\vect{u}\in B(\vect{x}_0,b)$ and $\vect{v}\in B(\vect{x}_0,b)^c$, then 
\begin{align*}
\widetilde Z_n(\vect{u})-\widetilde Z_n(\vect{v})
&=\rho_n^{-h(\vect{x}_0)}\widetilde X(\vect{x_0}+\rho_n \vect{u})\;,\\
&=\rho_n^{-h(\vect{x}_0)}\left(\varphi_{\vect{x}_0}(\vect{x_0}+\rho_n \vect{u})-\varphi_{\vect{x}_0}(\vect{x_0}+\rho_n \vect{v})\right)X(\vect{x_0}+\rho_n \vect{u})\;,
\end{align*}
with $\abs{\varphi_{\vect{x}_0}(\vect{x_0}+\rho_n \vect{u})-\varphi_{\vect{x}_0}(\vect{x_0}+\rho_n \vect{v})}^2\leq A(T)\rho_n^2 \norme{\vect{u}-\vect{v}}^2$ since $\varphi$ is $C^{\infty}$ that is a Lipschitz function and $ \mathbb{E}\left[X(\vect{x_0}+\rho_n \vect{u})^2\right]\leq B(T)$ by continuity of the covariance function, on the compact set $[-T,T]^2$. Again, \eqref{eq:kolmo_criteria_bis}  is satisfied.
\item It remains to deal with the case where $\vect{u}$ and $\vect{v}$ are in $B(\vect{x}_0,b)$, that is to say we can restrict ourselves for the proof to the case where $\vect{u}$ and $\vect{v}$ are in the neighborhood of $\vect{x}_0$ as small as we want. We will take for the purposes of the demonstration $b=1/2$, in other words we will be able to restrict ourselves to the compact set $[-T,T]=[-1/2,1/2]$.
\end{itemize}

\end{remark}

\begin{figure}
\centering
\includegraphics[scale=0.2]{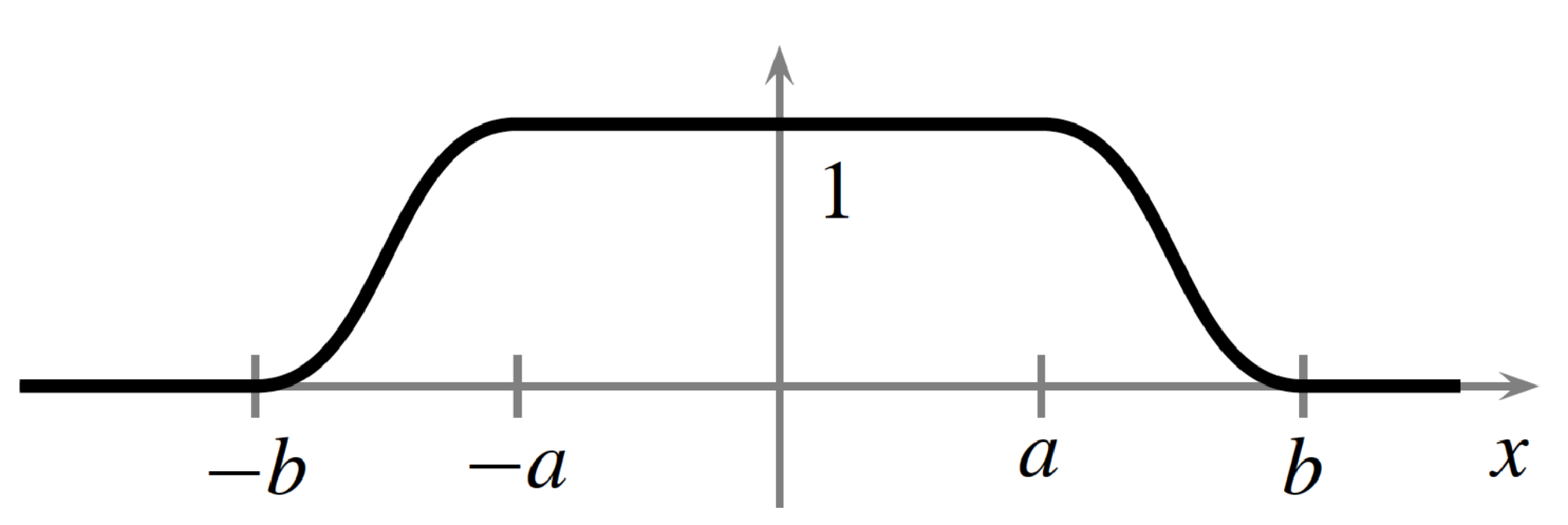}
\caption{Ilustration of a bump function $\varphi_0$.}
\label{fig:bump}
\end{figure}

Now we prove these two conditions (\ref{eq:convergence_cov}) and (\ref{eq:kolmo_criteria}).\\

\noindent (i) \underline{First step}:  \\

We aim at proving (\ref{eq:convergence_cov}) that is for all $(\vect{u},\vect{v})\in (\mathbb{R}^2)^2$:
\begin{equation}\label{e:tangentfield}
\lim_{\rho\to 0}\frac{\mathbb{E}[(X(\vect{x_0}+\rho \vect{u})-X(\vect{x_0}))(\overline{X(\vect{x_0}+\rho \vect{v})-X(\vect{x_0})})]}{\rho^{2h(\vect{x_0})}}=\mathbb{E}(Y_{\vect{x_0}}(\vect{u})\overline{Y_{\vect{x_0}}(\vect{v})})\;.
\end{equation}

Let us take $(\vect{u},\vect{v})\in (\mathbb{R}^2)^2$ and consider $\rho\leq \rho_0$ such that $\rho_0\vect{u},\rho_0\vect{v}\in B(\vect{0},1/2)$.\\

First observe that, by definition of $X$, one has
\begin{equation}
\mathbb{E}[(X(\vect{x_0}+\rho \vect{u})-X(\vect{x_0}))\overline{(X(\vect{x_0}+\rho \vect{v})-X(\vect{x_0})})]=\int_{\mathbb{R}^2}g_{\vect{u},\vect{v}}(\vect{x_0},\vect{\xi})\dif \vect{\xi}\;,
\end{equation}
where we set
\begin{align*}
g_{\vect{u},\vect{v}}(\vect{x_0},\vect{\xi})=&\left[\left(\expj{\ip{\vect{x_0}+\rho \vect{u}}{\vect{\xi}}}-1\right)f^{1/2}(\vect{x_0}+\rho \vect{u},\vect{\xi})-\left(\expj{\ip{\vect{x_0}}{\vect{\xi}}}-1\right)f^{1/2}(\vect{x_0},\vect{\xi})\right]\\
&\times\left[\left(\eu^{-\jj \ip{\vect{x_0}+\rho \vect{v}}{\vect{\xi}}}-1\right)f^{1/2}(\vect{x_0}+\rho \vect{v},\vect{\xi})-\left(\eu^{-\jj \ip{\vect{x_0}}{\vect{\xi}}}-1\right)f^{1/2}(\vect{x_0},\vect{\xi})\right]\;,
\end{align*}
and $f^{1/2}(\vect{x},\vect{\xi})=C(\vect{x},\vect{\xi})/\|\vect{\xi}\|^{h(\vect{x})+1}$.
We now split the integral into four terms:
\begin{align*}
&\mathbb{E}[(X(\vect{x_0}+\rho \vect{u})-X(\vect{x_0}))\overline{(X(\vect{x_0}+\rho \vect{v})-X(\vect{x_0})})]\\
&=\int_{\mathbb{R}^2}\left(\expj{\rho \ip{\vect{u}}{\vect{\xi}}}-1\right)\left(\eu^{-\jj \rho \ip{\vect{v}}{\vect{\xi}}}-1\right)f^{1/2}(\vect{x_0}+\rho \vect{u},\vect{\xi})f^{1/2}(\vect{x_0}+\rho \vect{v,}\vect{\xi})\dif \vect{\xi}&(\mat{I_1})\\
&+\int_{\mathbb{R}^2}\left(\expj{\rho \ip{\vect{u}}{\vect{\xi}}}-1\right)\left(1-\expj{\ip{\vect{x_0}}{\vect{\xi}}}\right)f^{1/2}(\vect{x_0}+\rho \vect{u},\vect{\xi})\left[f^{1/2}(\vect{x_0}+\rho \vect{v},\vect{\xi})-f^{1/2}(\vect{x_0},\vect{\xi})\right]\dif \vect{\xi}&(\mat{I_2})\\
&+\int_{\mathbb{R}^2}\left(1-\eu^{-\jj \ip{\vect{x_0}}{\vect{\xi}}}\right)\left(\eu^{-\jj \rho \ip{\vect{v}}{\vect{\xi}}}-1\right)\left[f^{1/2}(\vect{x_0}+\rho \vect{u},\vect{\xi})-f^{1/2}(\vect{x_0},\vect{\xi})\right]f^{1/2}(\vect{x_0}+\rho \vect{v},\vect{\xi})\dif \vect{\xi}&(\mat{I_3})\\
&+\int_{\mathbb{R}^2}\left|\expj{\ip{\vect{x_0}}{\vect{\xi}}}-1\right|^2\left[f^{1/2}(\vect{x_0}+\rho \vect{u},\vect{\xi})-f^{1/2}(\vect{x_0},\vect{\xi})\right]\left[f^{1/2}(\vect{x_0}+\rho \vect{v},\vect{\xi})-f^{1/2}(\vect{x_0},\vect{\xi})\right]\dif \vect{\xi} &(\mat{I_4})\\
&=\mat{I_1}+\mat{I_2}+\mat{I_3}+\mat{I_4}~.
\end{align*}
In order to prove \eqref{e:tangentfield}, we now  investigate the behavior of each integral $\mat{I_1},\mat{I_2},\mat{I_3},\mat{I_4}$ when $\rho \to 0$.\\

\noindent $\bullet$ \textbf{Study of the first term $\mat{I_1}$}\\

\noindent
We suppose below that $\rho >0$: indeed, since $\vect{\xi}\mapsto C(\vect{x},\vect{\xi})$ is even, the case $\rho<0$ derives in the same way.  
 In the integral $\mat{I_1}$, we set $\vect{\zeta}=\rho\vect{\xi}$ ($\vect{\zeta}=-\rho\vect{\xi}$ if $\rho <0$), $\dif \vect{\zeta}=\rho^2\dif \vect{\xi}$ and use the explicit expression of $f$, then:
\begin{align*}
&\mat{I_1}=\rho^{h(\vect{x_0}+\rho \vect{u})+h(\vect{x_0}+\rho \vect{v})}\int_{\mathbb{R}^2}\left(\expj{\ip{\vect{u}}{\vect{\zeta}}}-1\right)\left(\eu^{-\jj \ip{\vect{v}}{\vect{\zeta}}}-1\right)\frac{C(\vect{x_0}+\rho \vect{u},\vect{\zeta}/\rho)C(\vect{x_0}+\rho \vect{v},\vect{\zeta}/\rho)}{\lVert \vect{\zeta} \rVert^{h(\vect{x_0}+\rho \vect{u})+h(\vect{x_0}+\rho \vect{v})+2}}\dif \vect{\zeta}\;.
\end{align*}
By homogeneity of $\vect{\xi}\mapsto C(\vect{x},\vect{\xi})$, we deduce:
\begin{align*}
&\rho^{-2h(\vect{x_0})}\mat{I_1}=\rho^{h(\vect{x_0}+\rho \vect{u})+h(\vect{x_0}+\rho \vect{v})-2h(\vect{x_0})}\\
&\times \int_{\mathbb{R}^2}\left(\expj{\ip{\vect{u}}{\vect{\zeta}}}-1\right)\left(\eu^{-\jj \ip{\vect{v}}{\vect{\zeta}}}-1\right)\frac{C(\vect{x_0}+\rho \vect{u},\vect{\zeta})C(\vect{x_0}+\rho \vect{v},\vect{\zeta})}{\lVert \vect{\zeta} \rVert^{h(\vect{x_0}+\rho \vect{u})+h(\vect{x_0}+\rho \vect{v})+2}}\dif \vect{\zeta}\;.
\end{align*}
Observe now that
\begin{equation*}
\rho^{h(\vect{x_0}+\rho \vect{u})+h(\vect{x_0}+\rho \vect{v})-2h(\vect{x_0})}=\exp\left({\log\rho~ [h(\vect{x_0}+\rho \vect{u})+h(\vect{x_0}+\rho \vect{v})-2h(\vect{x_0})]}\right)\;.
\end{equation*}
Using that $h$ is $\beta-$H\"{o}lder, we obtain
\begin{equation*}
|h(\vect{x_0}+\rho \vect{u})+h(\vect{x_0}+\rho \vect{v})-2h(\vect{x_0})|\leq |h(\vect{x_0}+\rho \vect{u})-h(\vect{x_0})|+|h(\vect{x_0}+\rho \vect{v})-h(\vect{x_0})|\lesssim (\lVert \vect{u}\rVert^\beta+\lVert\vect{v}\rVert^\beta)|\rho|^\beta\;.
\end{equation*}
Since $\beta>0$ by assumption, and $\displaystyle \lim_{\rho\to 0^+}\rho^\beta \log\rho=0$, we then deduce the limit:
\begin{equation}\label{eq:lim_rho}
\lim_{\rho\to 0^+}\log\rho~[h(\vect{x_0}+\rho \vect{u})+h(\vect{x_0}+\rho \vect{v})-2h(\vect{x_0})]=0\;,
\end{equation}
and hence $\displaystyle \lim_{\rho\to 0^+}\rho^{h(\vect{x_0}+\rho \vect{u})+h(\vect{x_0}+\rho \vect{v})-2h(\vect{x_0})}=1$.

\medskip

It implies that
\begin{align*}
&\lim_{\rho\to 0^+}\rho^{-2h(\vect{x_0})}\mat{I_1}=\lim_{\rho\to 0^+}\int_{\lVert \vect{\zeta} \rVert\leq 1}\left(\expj{\ip{\vect{u}}{\vect{\zeta}}}-1\right)\left(\eu^{-\jj \ip{\vect{v}}{\vect{\zeta}}}-1\right)\frac{C(\vect{x_0}+\rho \vect{u},\vect{\zeta})C(\vect{x_0}+\rho \vect{v},\vect{\zeta})}{\lVert \vect{\zeta} \rVert^{h(\vect{x_0}+\rho \vect{u})+h(\vect{x_0}+\rho \vect{v})+2}}\dif \vect{\zeta}\\
&+\lim_{\rho\to 0^+}\int_{\lVert \vect{\zeta} \rVert\geq 1}\left(\expj{\ip{\vect{u}}{\vect{\zeta}}}-1\right)\left(\eu^{-\jj \ip{\vect{v}}{\vect{\zeta}}}-1\right)\frac{C(\vect{x_0}+\rho \vect{u},\vect{\zeta})C(\vect{x_0}+\rho \vect{v},\vect{\zeta})}{\lVert \vect{\zeta} \rVert^{h(\vect{x_0}+\rho \vect{u})+h(\vect{x_0}+\rho \vect{v})+2}}\dif \vect{\zeta}\;.
\end{align*}
We now apply the Lebesgue's Dominated Convergence Theorem to each integral separately. We first bound the two integrands as follows:
\begin{align*}
& \forall \lVert \vect{\zeta} \rVert\leq 1,\quad  \left|(\expj{\ip{\vect{u}}{\vect{\zeta}}}-1)(\eu^{-\jj \ip{\vect{v}}{\vect{\zeta}}}-1)\frac{C(\vect{x_0}+\rho \vect{u},\vect{\zeta})C(\vect{x_0}+\rho \vect{v},\vect{\zeta})}{\lVert \vect{\zeta} \rVert^{h(\vect{x_0}+\rho \vect{u})+h(\vect{x_0}+\rho \vect{v})+2}}\right|\leq \frac{M^2\|\vect{u}\|\|\vect{v}\|\lVert \vect{\zeta} \rVert^2}{\lVert \vect{\zeta} \rVert^{2(b+1)}}\;,\\ 
&\forall \lVert \vect{\zeta} \rVert\geq 1, \quad \left|(\expj{\ip{\vect{u}}{\vect{\zeta}}}-1)(\eu^{-\jj \ip{\vect{v}}{\vect{\zeta}}}-1)\frac{C(\vect{x_0}+\rho \vect{u},\vect{\zeta})C(\vect{x_0}+\rho \vect{v},\vect{\zeta})}{\lVert \vect{\zeta} \rVert^{h(\vect{x_0}+\rho \vect{u})+h(\vect{x_0}+\rho \vect{v})+2}}\right|\leq 4M^2\frac{1}{\lVert \vect{\zeta} \rVert^{2(a+1)}}\;,
\end{align*}
In the first line we used $|\expj{t}-1|\leq 2~|\sin(t/2)|\leq |t|$, and the Cauchy-Schwarz inequality applied to the $\R^2$-scalar product. Secondly, since $a>0$ and $b<1$ by assumption, we easily check that the functions $\vect{\zeta}\mapsto \lVert \vect{\zeta} \rVert^{-2b}$ and $\vect{\zeta}\mapsto \lVert \vect{\zeta} \rVert^{-2(a+1)}$ are respectively integrable on $\lVert \vect{\zeta} \rVert\leq 1$ and $\lVert \vect{\zeta} \rVert\geq 1$. The Lebesgue's Dominated Convergence Theorem then implies that
\begin{align*}
&\lim_{\rho\to 0^+}\rho^{-2h(\vect{x_0})}\mat{I_1}=\int_{\mathbb{R}^2}\left(\expj{\ip{\vect{u}}{\vect{\zeta}}}-1\right)\left(\eu^{-\jj \ip{\vect{v}}{\vect{\zeta}}}-1\right)\frac{C(\vect{x_0},\vect{\zeta})^2}{\lVert \vect{\zeta} \rVert^{2h(\vect{x_0})+2}}\dif \vect{\zeta}=\mathbb{E}(Y_{\vect{x_0}}(\vect{u})Y_{\vect{x_0}}(\vect{v}))\;.
\end{align*}
since the functions $h$ and $\vect{x} \to C(\vect{x} , \vect{\zeta})$ are continuous.\\

\noindent $\bullet$ \textbf{Study of the other terms $\mat{I_2},\mat{I_3},\mat{I_4}$}\\

We now prove that the three other integrals $\mat{I_2},\mat{I_3},\mat{I_4}$ are negligible with respect to the first one when $\rho$ is small.\\

\noindent We only detail the negligibility of $\mat{I_2}$, the other cases $\mat{I_3}$ and $\mat{I_4}$ being similar. Using the Cauchy--Schwarz inequality, we get that
\begin{align*}
&\mat{I_2}\leq \left[\int_{\mathbb{R}^2}\left|\expj{\rho \ip{\vect{u}}{\vect{\xi}}}-1\right|^2 f(\vect{x_0}+\rho \vect{u},\vect{\xi})\dif\vect{\xi}\right]^{\sfrac{1}{2}}\\
&\qquad \times \left[\int_{\mathbb{R}^2}\left|\expj{\ip{\vect{x_0}}{\vect{\xi}}}-1\right|^2\left[f^{1/2}(\vect{x_0}+\rho \vect{v},\vect{\xi})-f^{1/2}(\vect{x_0},\vect{\xi})\right]^2\dif \vect{\xi}\right]^{\sfrac{1}{2}}\;.
\end{align*}
The analysis of the first integral has already be done in the study of  $\mat{I_1}$ taking $\vect{u}=\vect{v}$. We then obtain
\begin{equation}
\label{eq:I2a}
\lim_{\rho\to 0}\rho^{-2h(\vect{x_0})}\int_{\mathbb{R}^2}\left|\expj{\rho \ip{\vect{u}}{\vect{\xi}}}-1\right|^2 f(\vect{x_0}+\rho \vect{u},\vect{\xi})\dif \vect{\xi}=\mathbb{E}[Y_{\vect{x_0}}(\vect{u})^2]\;.
\end{equation}
The bound of the second integral directly comes from Lemma~\ref{lem:2}. Since $\beta \geq \sup_{\vect{x}} h(\vect{x})$, we get that
$$
\lim_{\rho\to 0} \rho^{-2h(\vect{x_0})}\mat{I_2} = 0\;.
$$
\noindent
The same approach also yields for $\mat{I_3}$ and $\mat{I_4}$, leading to 
$$
\lim_{\rho\to 0} \rho^{-2h(\vect{x_0})}\mat{I_3} = \lim_{\rho\to 0} \rho^{-2h(\vect{x_0})}\mat{I_4}= 0\;.
$$
which concludes the proof of \eqref{e:tangentfield}.

\

(ii) \underline{Second step}: \\

We now have to prove that the sequence $(Z_n)_{n\in \mathbb{N}^{\ast}}\overset{\mathrm{def}}{=}(Z_{\vect{x_0},\rho_n})_{n\in \mathbb{N}^{\ast}}$ satisfies \eqref{eq:kolmo_criteria}, with $\rho\equiv \rho_n=\frac{1}{n} \in [0,1]$ and with $\vect{u}$ and $\vect{v}$ restricted to $B(\vect{0},1/2)$ from Remark \ref{rem:kolmo}, that in the following $T=1/2$. Then, we have:


\begin{eqnarray*}
&&Z_{\vect{x_0},\rho}(\vect{u})-Z_{\vect{x_0},\rho}(\vect{v})\\
&=&\frac{1}{\rho^{h(\vect{x_0})}} \int_{ \mathbb{R}^2}^{} \left[\left(\expj{\ip{\vect{x_0}+\rho \vect{u}}{\vect{\xi}}}-1\right)f^{1/2}(\vect{x_0}+\rho \vect{u})-\left(\expj{\ip{\vect{x_0}+\rho \vect{v}}{\vect{\xi}}}-1\right)f^{1/2}(\vect{x_0}+\rho \vect{v})\right]\widehat{\ope{W}}(\diff \vect{\xi})\;.
\end{eqnarray*}
Hence,
\begin{eqnarray*}
&& \mathbb{E}\left[\left(Z_{\vect{x_0},\rho}(\vect{u})-Z_{\vect{x_0},\rho}(\vect{v})\right)^2\right]\\
&=&\frac{1}{\rho^{2h(\vect{x_0})}} \int_{ \mathbb{R}^2}^{} \abs{\left(\expj{\ip{\vect{x_0}+\rho \vect{u}}{\vect{\xi}}}-1\right)f^{1/2}(\vect{x_0}+\rho \vect{u},\vect{\xi})-\left(\expj{\ip{\vect{x_0}+\rho \vect{v}}{\vect{\xi}}}-1\right)f^{1/2}(\vect{x_0}+\rho \vect{v},\vect{\xi})}^2\dif \vect{\xi}\;,\\
&=&\frac{1}{\rho^{2h(\vect{x_0})}} \int_{ \mathbb{R}^2}^{} \biggl|\left(\expj{\ip{\vect{x_0}+\rho \vect{u}}{\vect{\xi}}}-1\right)\left(f^{1/2}(\vect{x_0}+\rho \vect{u},\vect{\xi})-f^{1/2}(\vect{x_0}+\rho \vect{v},\vect{\xi})\right)\\
&&\qquad \qquad \qquad  -\left(\expj{\ip{\vect{x_0}+\rho \vect{v}}{\vect{\xi}}}-1-(\expj{\ip{\vect{x_0}+\rho \vect{u}}{\vect{\xi}}}-1)\right)f^{1/2}(\vect{x_0}+\rho \vect{v},\vect{\xi})\biggl|^2\dif \vect{\xi}\;,\\
&\leq & \frac{2}{\rho^{2h(\vect{x_0})}} \int_{ \mathbb{R}^2}^{} \abs{\expj{\ip{\vect{x_0}+\rho \vect{u}}{\vect{\xi}}}-1}^2\left(f^{1/2}(\vect{x_0}+\rho \vect{u},\vect{\xi})-f^{1/2}(\vect{x_0}+\rho \vect{v},\vect{\xi})\right)^2\diff \vect{\xi}\\
&&+\frac{2}{\rho^{2h(\vect{x_0})}} \int_{ \mathbb{R}^2} \abs{\expj{\ip{\rho (\vect{v}-\vect{u})}{\vect{\xi}}}-1}^2 f(\vect{x_0}+\rho \vect{v})\diff \vect{\xi}\;.
\end{eqnarray*}
the last inequality coming from $\abs{a-b}^2\leq 2\abs{a}^2+2\abs{b}^2$.

We now apply Lemma~ \ref{lem:2} with $\vect{x}=\vect{x_0}+\rho\vect{u}$ and $\vect{w}=\vect{v}-\vect{u}\in B(\vect{0},1)$. It implies the following bound for the first integral
\begin{eqnarray}
&&\frac{2}{\rho^{2h(\vect{x_0})}}\int_{ \mathbb{R}^2}^{} \abs{\expj{\ip{\vect{x_0}+\rho \vect{u}}{\vect{\xi}}}-1}^2\left(f^{1/2}(\vect{x_0}+\rho \vect{u},\vect{\xi})-f^{1/2}(\vect{x_0}+\rho \vect{v},\vect{\xi})\right)^2\diff \vect{\xi}\;,\nonumber\\
&\leq& 2\left(\sup_{\vect{y}\in B(\vect{x_0},2T)}K_{\vect{y}}\right)\left(1+2\|\vect{x_0}\|^2+2\|\vect{u}\|^2\right)\max\left(\|\vect{v}-\vect{u}\|^{2\beta},\|\vect{v}-\vect{u}\|^{2\eta}\right)|\rho|^{2\beta-2h(\vect{x_0})}\;,\nonumber\\
&\leq & K_1 |\rho|^{2\beta-2h(\vect{x_0})} \max\left(\|\vect{v}-\vect{u}\|^{2\beta},\|\vect{v}-\vect{u}\|^{2\eta}\right)\;,\label{e:K1}
\end{eqnarray}
with $\displaystyle K_1=2\left(\sup_{\vect{y}\in B(\vect{x_0},2T)}K_{\vect{y}}\right)\left(1+2\|\vect{x_0}\|^2+4T^2\right)$ and $\beta-h(\vect{x}_0)$ positive.

To bound the second one observe that $C$ is homogeneous w.r.t. the second variable and bounded. Set $\vect{\zeta}=\rho\vect{\xi}\|\vect{u}-\vect{v}\|$ and deduce that
\begin{eqnarray*}
&& \frac{2}{\rho^{2h(\vect{x_0})}} \int_{ \mathbb{R}^2} \abs{\expj{\ip{\rho (\vect{v}-\vect{u})}{\vect{\xi}}}-1}^2 f(\vect{x_0}+\rho \vect{v})\diff \vect{\xi}\\
&\leq& 2\|\vect{u}-\vect{v}\|^{2h(\vect{x_0}+\rho \vect{u})}\|C\|_\infty \rho^{2h(\vect{x_0}+\rho\vect{u})-2h(\vect{x_0})}\\
&&\times \int_{ \mathbb{R}^2} \abs{\expj{\ip{\vect{\Theta}}{\vect{\zeta}}}-1}^2\left(\|\vect{\zeta}\|^{-2a-2} \mathds{1}_{\|\vect{\zeta}\|>1}+\|\vect{\zeta}\|^{-2b-2} \mathds{1}_{\|\vect{\zeta}\|\leq 1}\right)\diff \vect{\zeta}\;,
\end{eqnarray*}
with $\vect{\Theta}=(\vect{u}-\vect{v})/\|\vect{u}-\vect{v}\|$. By the same arguments than in (\ref{eq:lim_rho}) we have that 
\[
\lim_{\rho\to 0} \rho^{2(h(\vect{x_0}+\rho \vect{v})-h(\vect{x_0}))}=1\;,
\]
so it has a finite upper bound $A_1>0$, which is achieved on a compact by continuity
\[
A_1=\max_{\rho, \vect{v}} \left\{(\rho,\vect{v})\in [0,1]\times [-T,T]^2,\; \rho^{2(h(\vect{x_0}+\rho \vect{v})-h(\vect{x_0}))}\right\}\;.
\]
Identically we have $\lVert \vect{u}-\vect{v}\rVert^{2h(\vect{x_0}+\rho \vect{v})}=\lVert \vect{u}-\vect{v}\rVert^{2(h(\vect{x_0}+\rho \vect{v})-h(\vect{x}_0))}\lVert \vect{u}-\vect{v}\rVert^{2h(\vect{x}_0)}$ and the first term tends to 1, then the function $(\rho,\vect{u},\vect{v})\mapsto  \lVert \vect{u}-\vect{v}\rVert^{2(h(\vect{x_0}+\rho \vect{v})-h(\vect{x}_0))}$ also achieves its upper bound $A_2>0$. Thus, 
\[
2\|\vect{u}-\vect{v}\|^{2h(\vect{x_0}+\rho \vect{u})}\|C\|_\infty \rho^{2h(\vect{x_0}+\rho\vect{u})-2h(\vect{x_0})}\leq 2A_1A_2\|C\|_\infty \lVert \vect{u}-\vect{v}\rVert^{2h(\vect{x}_0)}\;.
\]   

Hence, using that $\abs{\expj{\ip{\vect{\Theta}}{\vect{\zeta}}}-1}^2\leq \min(2,\lVert \vect{\Theta}\rVert \lVert \vect{\zeta}\rVert)=\min(2,\lVert \vect{\zeta}\rVert)$,
\begin{equation}\label{e:K2}
\frac{2}{\rho^{2h(\vect{x_0})}} \int_{ \mathbb{R}^2} \abs{\expj{\ip{\rho (\vect{v}-\vect{u})}{\vect{\xi}}}-1}^2 f(\vect{x_0}+\rho \vect{v},\vect{\xi})\diff \vect{\xi}
\leq K_2\|\vect{u}-\vect{v}\|^{2h(\vect{x}_0)}\;,
\end{equation}
with $K_2=2A_1A_2\|C\|_\infty\int_{ \mathbb{R}^2} \min(2,\|\vect{\zeta}\|^2)(\|\vect{\zeta}\|^{-2a-2} \mathds{1}_{\|\vect{\zeta}\|>1}+\|\vect{\zeta}\|^{-2b} \mathds{1}_{\|\vect{\zeta}\|\leq 1})\diff \vect{\zeta}$.\\

Since $K_1,K_2$ are two positive constants depending only on $T$, inequalities~\eqref{e:K1} and~\eqref{e:K2} imply that
\[
\mathbb{E}\left[\left(Z_{\vect{x_0},\rho}(\vect{u})-Z_{\vect{x_0},\rho}(\vect{v})\right)^2\right]\leq K_2 \lVert \vect{u}-\vect{v}\rVert^{2h(\vect{x}_0)}\left[1+\frac{K_1}{K_2} \max\left(\|\vect{v}-\vect{u}\|^{2(\beta-h(\vect{x}_0))},\|\vect{v}-\vect{u}\|^{2(\eta-h(\vect{x}_0))}\right)\right]\;,
\]
and since $\beta-h(\vect{x}_0)>$ and $\eta-h(\vect{x}_0)>0$, the second factor achieved its bounds on the compact set $[-T,T]^2\times [-T,T]^2$, hence
\[
\sup_{\rho\in (0,1)}\lVert \vect{u}-\vect{v}\rVert^{-2h(\vect{x}_0)}\mathbb{E}\left[\left(Z_{\vect{x_0},\rho}(\vect{u})-Z_{\vect{x_0},\rho}(\vect{v})\right)^2\right]<\infty\;.
\]
Which proves the inequality~\eqref{eq:kolmo_criteria} from Remark \ref{rem:gaussian_moment} with $\gamma_0=h(\vect{x}_0)$. \hfill $\square$\\

\noindent The proof of the points (i) and (ii) completes the proof of the Proposition \ref{theo:local:orientation:genCFBA}.
\subsection{Proof of Proposition~\ref{pro:ex2}\\}\label{s:proof:pro:ex2}

Let $\vect{x_0}\in \R^2$. Since $X$ is $H$--self-similar, one has 
\[
X(\vect{x})=\int_{\mathbb{R}^2}(\expj{\langle \vect{x},\vect{\xi}\rangle}-1)f^{1/2}(\vect{\xi})~\widehat{\ope{W}}(\diff \vect{\xi})\;,
\]
with $f(\vect{\xi})=C_X(\vect{\xi}) ~ \lVert \vect{\xi}\rVert^{-2H-2}$. Then,
$$
Z_{\ope{\Phi},X}(\vect{x})=X(\ope{\Phi}(\vect{x}))=\int_{\mathbb{R}^2}(\expj{\langle \ope{\Phi}(\vect{x}),\vect{\xi}\rangle}-1)f^{1/2}(\vect{\xi})~\widehat{\ope{W}}(\diff \vect{\xi})\;.
$$

As for the proof of Proposition \ref{theo:local:orientation:genCFBA} in Section \ref{proof:prop9}, we divide the following proof into two steps.

\

\noindent (i) \underline{First step:} 

\

Let $ \vect{u},\vect{v}\in\mathbb{R}^2$, and consider:
\begin{eqnarray*}
&&\frac{1}{\rho^{2H}}\mathbb{E}\left[\left(Z_{\ope{\Phi},X}(\vect{x_0}+\rho \vect{u})-Z_{\ope{\Phi},X}(\vect{x_0})\right)\left(Z_{\ope{\Phi},X}(\vect{x_0}+\rho \vect{v})-Z_{\ope{\Phi},X}(\vect{x_0})\right)\right]\\
&=&\frac{1}{\rho^{2H}} \int_{\mathbb{R}^2}\expj{\ip{\ope{\Phi}(\vect{x_0})}{\vect{\xi}}}\left(\expj{\ip{\ope{\Phi}(\vect{x_0}+\rho \vect{u})-\ope{\Phi}(\vect{x_0})}{\vect{\xi}}}-1\right)\eu^{-\jj \ip{\ope{\Phi}(\vect{x_0})}{\vect{\xi}}}\left(\eu^{-\jj \ip{ \ope{\Phi}(\vect{x_0}+\rho \vect{v})-\Phi(\vect{x_0})}{\vect{\xi}}}-1\right)f(\vect{\xi})\diff \vect{\xi}\;,\\
&=&\frac{1}{\rho^{2H}} \int_{\mathbb{R}^2}
\left(\expj{\ip{\frac{\ope{\Phi}(\vect{x_0}+\rho \vect{u})-\ope{\Phi}(\vect{x_0})}{\rho}}{\rho \vect{\xi}}}-1\right)
\left(\eu^{-\jj \ip{\frac{\ope{\Phi}(\vect{x_0}+\rho \vect{v})-\ope{\Phi}(\vect{x_0})}{\rho}}{\rho \vect{\xi}}}-1\right)
f(\vect{\xi})\diff \vect{\xi}\;,\\
&=&\frac{1}{\rho^{2H}} \int_{\mathbb{R}^2}\left(\expj{\ip{\frac{\ope{\Phi}(\vect{x_0}+\rho \vect{u})-\ope{\Phi}(\vect{x_0})}{\rho}}{\vect{\zeta}}}-1\right)
\left(\eu^{-\jj \ip{\frac{\ope{\Phi}(\vect{x_0}+\rho \vect{v})-\ope{\Phi}(\vect{x_0})}{\rho}}{ \vect{\zeta}}}-1\right)f(\vect{\zeta}/\rho)\diff \vect{\zeta}/\rho^2\;,\\
&=&\frac{1}{\rho^{2H}} \int_{\mathbb{R}^2}\left(\expj{\ip{\frac{\ope{\Phi}(\vect{x_0}+\rho \vect{u})-\ope{\Phi}(\vect{x_0})}{\rho}}{\vect{\zeta}}}-1\right)\left(\eu^{-\jj \ip{\frac{\ope{\Phi}(\vect{x_0}+\rho \vect{v})-\ope{\Phi}(\vect{x_0})}{\rho}}{ \vect{\zeta}}}-1\right)\rho^{2H+2}f(\vect{\zeta})\diff \vect{\zeta}/\rho^2\;,\\
&=&\int_{\mathbb{R}^2}\left(\expj{\ip{\frac{\ope{\Phi}(\vect{x_0}+\rho \vect{u})-\ope{\Phi}(\vect{x_0})}{\rho}}{\vect{\zeta}}}-1\right)\left(\eu^{-\jj \ip{\frac{\ope{\Phi}(\vect{x_0}+\rho \vect{v})-\ope{\Phi}(\vect{x_0})}{\rho}}{ \vect{\zeta}}}-1\right)f(\vect{\zeta})\diff \vect{\zeta}\;.
\end{eqnarray*}To compute the limit of this quantity when $\rho \to 0$, let us denote by $g(\rho,\vect{\zeta})$ the integrand of the last integral. We have
\[
\lim_{\rho\to 0}g(\rho,\vect{\zeta})=\left(\expj{\ip{ \ope{D}\ope{\Phi}(\vect{x_0})\vect{u}}{\vect{\zeta}}}-1\right)\left(\eu^{-\jj \ip{ \ope{D}\ope{\Phi}(\vect{x_0})\vect{v}}{\vect{\zeta}}}-1\right)f(\vect{\zeta})\;.
\]
Now we have to bound the integrand $\lvert g(\rho,\vect{\zeta})\rvert$: using the inequality $\lvert \eu^{\jj x}-1\rvert\leqslant \min (2,\lvert x\rvert)$, one has
\begin{align*}
\lvert g(\rho,\vect{\zeta})\rvert
&\leqslant \min \left(2,\left \lvert \ip{\frac{\ope{\Phi}(\vect{x_0}+\rho \vect{u})-\ope{\Phi}(\vect{x_0})}{\rho}}{\vect{\zeta}}\right \rvert\right)\min \left(2,\left \lvert \ip{\frac{\ope{\Phi}(\vect{x_0}+\rho \vect{v})-\ope{\Phi}(\vect{x_0})}{\rho}}{\vect{\zeta}}\right \rvert\right)f(\vect{\zeta})\;,\\
&\leqslant \min \left(2,\frac{1}{\rho}\norme{\ope{\Phi}(\vect{x_0}+\rho \vect{u})-\ope{\Phi}(\vect{x_0})} \norme{\vect{\zeta}}\right)\min \left(2,\frac{1}{\rho}\norme{\ope{\Phi}(\vect{x_0}+\rho \vect{v})-\ope{\Phi}(\vect{x_0})} \norme{\vect{\zeta}}\right)f(\vect{\zeta})\;,\\
&\leqslant \min \left(2,\frac{1}{\rho}\sup_{[\vect{x_0},\vect{x_0}+\rho \vect{u}]}\norme{\ope{\Phi}'(\vect{x})}\norme{\rho \vect{u}} \norme{\vect{\zeta}}\right)\min \left(2,\frac{1}{\rho}\sup_{[\vect{x_0},\vect{x_0}+\rho \vect{v}]}\norme{\ope{\Phi}'(\vect{x})}\norme{\rho \vect{v}} \norme{\vect{\zeta}}\right)f(\vect{\zeta})\;,\\
&\leqslant \min \left(2,K\norme{\vect{u}} \norme{\vect{\zeta}}\right)\min \left(2,K\norme{\vect{v}} \norme{\vect{\zeta}}\right)f(\vect{\zeta})\;,\\
&\leqslant \min(2,C\norme{\vect{\zeta}})^2f(\vect{\zeta})\equiv G(\vect{\zeta})\;.
\end{align*}
The second inequality is obtained by Cauchy--Schwarz inequality, the third by mean value inequality, the forth under the assumption that $\ope{\Phi}$ which is continuously differentiable so $\norme{\ope{\Phi}'}\leqslant K$, the fifth with $C=K\max(\norme{\vect{u}},\norme{\vect{v}})$. Finally, we show that $G$ is integrable since:
\begin{align*}
\int_{\mathbb{R}^2}G(\vect{\zeta})\dif \vect{\zeta}
&=\int_{\mathbb{R}^2} \min(2,C\norme{\vect{\zeta}})^2f(\vect{\zeta})\dif \vect{\zeta}\;,\\
&=\frac{1}{C^2}\int_{\mathbb{R}^2} \min(2,\norme{\vect{\xi}})^2f\left(\frac{\vect{\xi}}{C}\right)\dif \vect{\xi}\;,\\
&=\frac{C^{2H+2}}{C^2}\int_{\mathbb{R}^2} \min(2,\norme{\vect{\xi}})^2f(\vect{\xi})\dif \vect{\xi}\;,\\
&\leqslant C^{2H}\int_{\mathbb{R}^2} \min(4,\norme{\vect{\xi}}^2) f(\vect{\xi})\dif \vect{\xi}<+\infty\;.
\end{align*}
where we have used  the homogeneity of $f$, and Proposition \ref{prop:borel}. Hence, using the Lebesgue's Dominated Convergence Theorem, we obtain
\[
\lim_{\rho\to 0} \frac{1}{\rho^{2H}}\mathbb{E}[(Z_{\ope{\Phi},X}(\vect{x_0}+\rho \vect{u})-Z_{\ope{\Phi},X}(\vect{x_0}))(Z_{\ope{\Phi},X}(\vect{x_0}+\rho \vect{v})-Z_{\ope{\Phi},X}(\vect{x_0}))]=\mathbb{E}[Y_{\vect{x_0}}(\vect{u})Y_{\vect{x_0}}(\vect{v})]
\]
where we denoted
\[
Y_{\vect{x_0}}(\vect{u})=\int_{\mathbb{R}^2} \left(\expj{\ip{ \ope{D}\ope{\Phi}(\vect{x_0})\vect{u}}{\vect{\xi}}}-1\right)f^{1/2}(\vect{\xi})\ope{\widehat W}(\diff \vect{\xi})\;,
\]
which is by definition the tangent field. 

\

\noindent (ii) \underline{Second step:} 

\

We then prove that the convergence holds in the sense of finite dimensional distributions. To deduce Proposition~\ref{pro:ex2}, we follow the same way as in Step 2 of Theorem~\ref{theo:local:orientation:genCFBA} a Kolmogorov criteria.
\begin{eqnarray*}
&&\mathbb{E}\left[\left(\frac{Z_{\ope{\Phi},X}(\vect{x_0}+\rho \vect{u})-Z_{\ope{\Phi},X}(\vect{x_0})}{\rho^H}-\frac{Z_{\ope{\Phi},X}(\vect{x_0}+\rho \vect{u})-Z_{\ope{\Phi},X}(\vect{x_0})}{\rho^H}\right)^2\right]\\
&=&\frac{1}{\rho^{2H}} \int_{\mathbb{R}^2} \abs{\expj{\ip{\ope{\Phi}(\vect{x_0}+\rho \vect{u})}{\vect{\xi}}}-\expj{\ip{\ope{\Phi}(\vect{x_0}+\rho \vect{v})}{\vect{\xi}}}}^2f(\vect{\xi})\dif \vect{\xi}\;,\\
&=&\frac{1}{\rho^{2H}} \int_{\mathbb{R}^2} \abs{\expj{\ip{\ope{\Phi}(\vect{x_0}+\rho \vect{u})-\ope{\Phi}(\vect{x_0}+\rho \vect{v})}{\vect{\xi}}}-1}^2f(\vect{\xi})\dif \vect{\xi}\;,\\
&=&\int_{\mathbb{R}^2} \abs{\expj{\ip{\frac{\ope{\Phi}(\vect{x_0}+\rho \vect{u})-\ope{\Phi}(\vect{x_0}+\rho \vect{v})}{\rho}}{\vect{\zeta}}}-1}^2f(\vect{\zeta})\dif \vect{\zeta}\;,\\
&=&\int_{\mathbb{R}_+^{\ast}} \frac{ \abs{\expj{s}-1}^2}{s^{2H+1}}\left[ \int_{ \mathbb{S}^1}^{} \bigg| \ip{\frac{\ope{\Phi}(\vect{x_0}+\rho \vect{u})-\ope{\Phi}(\vect{x_0}+\rho \vect{v})}{\rho}}{\vect{\Theta}}\bigg|^{2H} C_X(\vect{\Theta})\dif \vect{\Theta}\right]\dif s\;,\\
&\leq & \bigg\| \frac{\ope{\Phi}(\vect{x_0}+\rho \vect{u})-\ope{\Phi}(\vect{x_0}+\rho \vect{v})}{\rho}\bigg\|^{2H}\left[ \int_{\mathbb{R}_+^{\ast}} \frac{ \abs{\expj{s}-1}^2}{s^{2H+1}}\dif s\right]\left[ \int_{ \mathbb{S}^1}^{}C_X(\vect{\Theta})\dif \vect{\Theta}\right]\;.
\end{eqnarray*}
with $s=r\ip{\frac{\ope{\Phi}(\vect{x_0}+\rho \vect{u})-\ope{\Phi}(\vect{x_0}+\rho \vect{v})}{\rho}}{\vect{\Theta}}$. Let denote by $I_H$ the first integral above and $I_C$ the second one. Then, since $\ope{\Phi}$ is $C^1$, one have
\begin{eqnarray*}
&&\mathbb{E}\left[\left(\frac{Z_{\ope{\Phi},X}(\vect{x_0}+\rho \vect{u})-Z_{\ope{\Phi},X}(\vect{x_0})}{\rho^H}-\frac{Z_{\ope{\Phi},X}(\vect{x_0}+\rho \vect{u})-Z_{\ope{\Phi},X}(\vect{x_0})}{\rho^H}\right)^2\right]\\
&\leq &\frac{I_H I_C}{\rho^{2H}} \left(\displaystyle \sup_{[\vect{x_0}+\rho \vect{u},\vect{x_0}+\rho \vect{v}]} \lVert \ope{\Phi}'(\vect{x})\rVert \lVert \rho (\vect{u}-\vect{v})\rVert\right)^{2H}\;,\\
&\leq &I_H I_C \lVert \ope{\Phi}'\rVert^{2H} \lVert \vect{u}-\vect{v}\rVert^{2H}\;,\\
&\leq  &C_0 \lVert \vect{u}-\vect{v}\rVert^{2H}\;.
\end{eqnarray*}
with $C_0=I_H I_C \lVert \ope{\Phi}'\rVert^{2H}$. We conclude like at the end of Step 2 of Theorem~\ref{theo:local:orientation:genCFBA}.


\section*{Acknowledgements}
The authors acknowledge the support of the French Agence Nationale de la Recherche (ANR) under reference ANR-13-BS03-0002-01 (ASTRES).

\section*{References}

\bibliographystyle{elsarticle-num}
\bibliography{Orientation}

\begin{thebibliography}{10}
\expandafter\ifx\csname url\endcsname\relax
  \def\url#1{\texttt{#1}}\fi
\expandafter\ifx\csname urlprefix\endcsname\relax\def\urlprefix{URL }\fi
\expandafter\ifx\csname href\endcsname\relax
  \def\href#1#2{#2} \def\path#1{#1}\fi

\bibitem{Pey07}
G.~Peyr\'e, Oriented patterns synthesis, Technical report CEREMADE (2007).

\bibitem{BE01}
R.~Jennane, R.~Harba, E.~Perrin, A.~Bonami, A.~Estrade, Analyse de champs
  browniens fractionnaires anisotropes, XXVIII\`eme colloque GRETSI, 2001.

\bibitem{Ben06}
D.~Benson, M.~Meerschaert, B.~Baumer, H.~Scheffler, Aquifer operator scaling
  and the effect on solute mixing and dispersion, Water Resources Research
  42~(1) (2006) 1--18.

\bibitem{FS01}
M.~Felsberg, G.~Sommer, {The monogenic signal}, IEEE Trans. Image Process.
  49~(12) (2001) 3136--3144.

\bibitem{bigun1987optimal}
J.~Bigun, Optimal orientation detection of linear symmetry (1987).

\bibitem{DFVM11}
A.~Depeursinge, A.~Foncubierta-Rodriguez, D.~Van~de Ville, H.~M{\"u}ller, Lung
  texture classification using locally-oriented {R}iesz components, Vol. 6893,
  Medical Image Computing and Computer-Assisted Intervention (MICCAI 2011),
  2011, pp. 231--238.

\bibitem{COP15}
M.~Clausel, T.~Oberlin, V.~Perrier, {The Monogenic Synchrosqueezed Wavelet
  Transform: A tool for the Decomposition/Demodulation of AM-FM images}, Appl.
  Comp. Harm. Anal. 39~(3) (2015) 450--486.

\bibitem{DH99}
S.~Davies, P.~Hall, Fractal analysis of surface roughness by using spatial data
  (with discussion), Journal of the Royal Statistical Society: Series B
  (Statistical Methodology) 61~(1) (1999) 3--37.

\bibitem{Stein70}
E.~Stein, Singular Integrals Differentiability Properties of Functions,
  Princeton University Press, New York, second edition, 1970.

\bibitem{F02}
M.~Felsberg, {Low-Level Image Processing with the Structure Multivector}, Vol.
  203, Inst. f{\"u}r Informatik und Praktische Mathematik., 2002.

\bibitem{UV09}
M.~Unser, D.~Van De~Ville, {Multiresolution Monogenic Signal Analysis Using the
  Riesz--Laplace Wavelet Transform}, IEEE Transactions on Image Processing
  18~(11) (2009) 2402--2418.

\bibitem{YQS11}
Y.~Yang, T.~Qian, F.~Sommen, {Phase Derivative of Monogenic Signals in Higher
  Dimensional Spaces}, Complex analysis and operator theory 6~(5) (2012)
  987--1010.

\bibitem{harris1988combined}
C.~Harris, M.~Stephens, A combined corner and edge detector, in: Alvey vision
  conference, Vol.~15, Citeseer, 1988, pp. 10--5244.

\bibitem{jahne2004practical}
B.~Jahne, Practical handbook on image processing for scientific and technical
  applications, CRC Press, 2004.

\bibitem{ishikawa1985generalized}
S.~Ishikawa, et~al., Generalized Hilbert transforms in tempered distributions,
  Keio University. Department of Mathematics, 1985.

\bibitem{koizumi1959hilbert}
S.~Koizumi, On the hilbert transform i*, Journal of the Faculty of Science,
  Hokkaido University. Ser. 1, Mathematics= 北海道大学理学部紀要
  14~(2-4) (1959) 153--224.

\bibitem{Do79}
R.~Dobrushin, {Gaussian and their subordinated self-similar random generalized
  fields}, The Annals of Probability 7~(1) (1979) 1--28.

\bibitem{BJR97}
A.~Benassi, S.~Jaffard, D.~Roux, {Elliptic Gaussian random processes}, Revista
  Matematica Iberoamericana 13~(1) (1997) 19--90.

\bibitem{bonami2003anisotropic}
A.~Bonami, A.~Estrade, Anisotropic analysis of some {G}aussian models, Journal
  of Fourier analysis and applications 9~(3) (2003) 215--236.

\bibitem{yaglom1957some}
A.~M. Yaglom, Some classes of random fields in n-dimensional space, related to
  stationary random processes, Theory of Probability \& Its Applications 2~(3)
  (1957) 273--320.

\bibitem{cohen2013fractional}
S.~Cohen, J.~Istas, Fractional fields and applications, Springer, 2013.

\bibitem{Ist07}
J.~Istas, Identifying the anisotropical function of a $d$-dimensional
  {G}aussian self-similar process with stationary increments, Statistical
  inference for stochastic processes 10~(1) (2007) 97--106.

\bibitem{pipiras2010regularization}
V.~Pipiras, M.~S. Taqqu, Regularization and integral representations of
  {H}ermite processes, Statistics \& probability letters 80~(23) (2010)
  2014--2023.

\bibitem{Lind93}
T.~Lindstr{\o}m, Fractional {B}rownian fields as integrals of white noise,
  Bulletin of the London Mathematical Society 25~(1) (1993) 83--88.

\bibitem{MVN68}
B.~Mandelbrot, J.~Van~Ness, Fractional {B}rownian motions, fractional noises
  and applications, SIAM review 10~(4) (1968) 422--437.

\bibitem{Falc02}
K.~Falconer, {Tangent Fields and the local structure of random fields}, Journal
  of Theoretical Probability 15 (2002) 731--750.

\bibitem{Falc03}
K.~Falconer, {The local structure of random processes}, Journal of the London
  Mathematical Society 67 (2003) 657--672.

\bibitem{Bill68}
P.~Billingsley, Convergence of Probability Measures, John Wiley, Chichester,
  1968.

\bibitem{LVPel95}
J.~L\'evy-Vehel, R.~Peltier, {Multifractional Brownian Motion : definition and
  preliminary results}, Research Report RR-2645, {INRIA} (1995).

\bibitem{Herb06}
E.~Herbin, {From N-parameter fractional Brownian motions to N-parameter
  multifractional Brownian motions}, The Rocky Mountain Journal of Mathematics
  36~(4) (2006) 1249--1284.

\bibitem{PCCP14}
K.~Polisano, M.~Clausel, V.~Perrier, L.~Condat, {Texture modeling by Gaussian
  fields with prescribed local orientation}, in: IEEE International Conference
  on Image Processing (ICIP), 2014, pp. 6091--6095.

\bibitem{PCCP15}
K.~Polisano, M.~Clausel, V.~Perrier, L.~Condat, {Mod\'elisation de texture par
  champ gaussien \`a orientation locale prescrite}, in: XXV\`eme colloque
  GRETSI sur le Traitement du Signal et des Images, Lyon, 2015.

\bibitem{polisano2017simulation}
K.~Polisano, M.~Clausel, V.~Perrier, L.~Condat, Simulation of oriented pattern
  with prescribed local orientation, Technical report, hal-01819990.

\bibitem{perrin1999reducing}
O.~Perrin, R.~Senoussi, et~al., Reducing non-stationary stochastic processes to
  stationarity by a time deformation, Statistics \& Probability Letters 43~(4)
  (1999) 393--397.

\bibitem{Per00}
O.~Perrin, R.~Senoussi, Reducing non-stationary random fields to stationarity
  and isotropy using a space deformation, Statistics and Probability Letters 48
  (2000) 23--32.

\bibitem{guyon2000identification}
X.~Guyon, O.~Perrin, Identification of space deformation using linear and
  superficial quadratic variations, Statistics \& probability letters 47~(3)
  (2000) 307--316.

\bibitem{KarShr88}
I.~Karatzas, S.~Shreve, Brownian Motion and stochastic calculus,
  Springer-Verlag, 1988.

\bibitem{tu2008bump}
L.~W. Tu, Bump functions and partitions of unity, An Introduction to Manifolds
  (2008) 127--134.

\end{thebibliography}
\end{document}